\newcommand\R{\mathbb R}
\newcommand\C{\mathcal C}
\newcommand\K{\mathcal K}
\renewcommand\O{\mathcal O}
\newcommand\T{\mathcal T}
\newcommand\E{\mathcal E}
\newcommand\M{\mathcal M}
\newcommand\N{\mathcal N}
\newcommand\V{\mathcal V}
\renewcommand\H{\mathcal H}
\newcommand\eps{\operatorname{\epsilon}}
\newcommand\x{\times}
\renewcommand\t{\tilde}
\newcommand\lbra{[\![}
\newcommand\rbra{]\!]}
\newcommand\lbrac{\,[\!\!\!\{}
\newcommand\rbrac{\}\!\!\!]\,}
\renewcommand\ll{|\kern-2pt|\kern-2pt|}
\newcommand\e{^\epsilon}
\numberwithin{equation}{section}
\theoremstyle{plain}
\newtheorem{thm}{Theorem}
\newtheorem{lem}[thm]{Lemma}
\numberwithin{thm}{section}
\theoremstyle{remark}
\def\underput#1#2#3{
\mathchoice
{\vtop{\ialign{##\crcr\hfil$#2$\vrule width0pt height0pt depth#3\hfil\crcr
\noalign{\nointerlineskip}\hfil$\scriptstyle#1$\hfil\crcr}}}
{\vtop{\ialign{##\crcr\hfil$#2$\vrule width0pt height0pt depth#3\hfil\crcr
\noalign{\nointerlineskip}\hfil$\scriptstyle#1$\hfil\crcr}}}
{\vtop{\ialign{##\crcr\hfil$\scriptstyle#2$\vrule width0pt height0pt
depth#3\hfil\crcr
\noalign{\nointerlineskip}\hfil$\scriptscriptstyle#1$\hfil\crcr}}}
{\vtop{\ialign{##\crcr\hfil$\scriptscriptstyle#2$\vrule width0pt height0pt
depth#3\hfil\crcr
\noalign{\nointerlineskip}\hfil$\scriptscriptstyle#1$\hfil\crcr}}}}
\def\stack#1#2#3{\rlap{#1}\lower#3\hbox{#2}}
\def\twiddlespace{1.2truept}
\def\dtwiddle{\displaystyle\sim}
\def\ttwiddle{\textstyle\sim}
\def\stwiddle{\scriptstyle\sim}
\def\sstwiddle{\scriptscriptstyle\sim}
\def\doubledtwiddle{\stack{$\dtwiddle$}{$\dtwiddle$}{\twiddlespace}}
\def\doublettwiddle{\stack{$\ttwiddle$}{$\ttwiddle$}{\twiddlespace}}
\def\doublestwiddle{\stack{$\stwiddle$}{$\stwiddle$}{\twiddlespace}}
\def\doublesstwiddle{\stack{$\sstwiddle$}{$\sstwiddle$}{\twiddlespace}}
\def\tripledtwiddle{\stack{$\dtwiddle$}{$\doubledtwiddle$}{\twiddlespace}}
\def\triplettwiddle{\stack{$\ttwiddle$}{$\doublettwiddle$}{\twiddlespace}}
\def\triplestwiddle{\stack{$\stwiddle$}{$\doublestwiddle$}{\twiddlespace}}
\def\triplesstwiddle{\stack{$\sstwiddle$}{$\doublesstwiddle$}{\twiddlespace}}
\def\quadrupledtwiddle{\stack{$\dtwiddle$}{$\tripledtwiddle$}{\twiddlespace}}
\def\quadruplettwiddle{\stack{$\ttwiddle$}{$\triplettwiddle$}{\twiddlespace}}
\def\quadruplestwiddle{\stack{$\stwiddle$}{$\triplestwiddle$}{\twiddlespace}}
\def\quadruplesstwiddle{\stack{$\sstwiddle$}{$\triplesstwiddle$}{\twiddlespace}}\def\quadru
\def\strikedist{3pt}
\def\dstrike{\vrule width7pt height0pt depth.4pt}
\def\tstrike{\vrule width7pt height0pt depth.4pt}
\def\sstrike{\hbox{\vrule width5pt height0pt depth.4pt}}
\def\ssstrike{\hbox{\vrule width3pt height0pt depth.4pt}}
\def\strike{{\mathchoice{\dstrike}{\tstrike}{\sstrike}{\ssstrike}}}
\def\ub#1{\underput\strike{#1}{\strikedist}}
\newcommand\!a{{\boldsymbol a}}
\newcommand\!b{{\boldsymbol b}}
\newcommand\!c{{\boldsymbol c}}
\newcommand\!d{{\boldsymbol d}}
\newcommand\!e{{\boldsymbol e}}
\newcommand\!f{{\boldsymbol f}}
\newcommand\!g{{\boldsymbol g}}
\newcommand\!h{{\boldsymbol h}}
\newcommand\!i{{\boldsymbol i}}
\newcommand\!j{{\boldsymbol j}}
\newcommand\!k{{\boldsymbol k}}
\newcommand\!l{{\boldsymbol l}}
\newcommand\!m{{\boldsymbol m}}
\newcommand\!n{{\boldsymbol n}}
\newcommand\!o{{\boldsymbol o}}
\newcommand\!p{{\boldsymbol p}}
\newcommand\!q{{\boldsymbol q}}
\newcommand\!r{{\boldsymbol r}}
\newcommand\!s{{\boldsymbol s}}
\newcommand\!t{{\boldsymbol t}}
\newcommand\!u{{\boldsymbol u}}
\newcommand\!v{{\boldsymbol v}}
\newcommand\!w{{\boldsymbol w}}
\newcommand\!x{{\boldsymbol x}}
\newcommand\!y{{\boldsymbol y}}
\newcommand\!z{{\boldsymbol z}}
\newcommand\!A{{\boldsymbol A}}
\newcommand\!B{{\boldsymbol B}}
\newcommand\!C{{\boldsymbol C}}
\newcommand\!D{{\boldsymbol D}}
\newcommand\!E{{\boldsymbol E}}
\newcommand\!F{{\boldsymbol F}}
\newcommand\!G{{\boldsymbol G}}
\newcommand\!H{{\boldsymbol H}}
\newcommand\!I{{\boldsymbol I}}
\newcommand\!J{{\boldsymbol J}}
\newcommand\!K{{\boldsymbol K}}
\newcommand\!L{{\boldsymbol L}}
\newcommand\!M{{\boldsymbol M}}
\newcommand\!N{{\boldsymbol N}}
\newcommand\!O{{\boldsymbol O}}
\newcommand\!P{{\boldsymbol P}}
\newcommand\!Q{{\boldsymbol Q}}
\newcommand\!R{{\boldsymbol R}}
\newcommand\!S{{\boldsymbol S}}
\newcommand\!T{{\boldsymbol T}}
\newcommand\!U{{\boldsymbol U}}
\newcommand\!V{{\boldsymbol V}}
\newcommand\!W{{\boldsymbol W}}
\newcommand\!X{{\boldsymbol X}}
\newcommand\!Y{{\boldsymbol Y}}
\newcommand\!Z{{\boldsymbol Z}}
\newcommand\!alpha{{\boldsymbol\alpha}}
\newcommand\!beta{{\boldsymbol\beta}}
\newcommand\!gamma{{\boldsymbol\gamma}}
\newcommand\!delta{{\boldsymbol\delta}}
\newcommand\!epsilon{{\boldsymbol\epsilon}}
\newcommand\!zeta{{\boldsymbol\zeta}}
\newcommand\!eta{{\boldsymbol\eta}}
\newcommand\!theta{{\boldsymbol\theta}}
\newcommand\!iota{{\boldsymbol\iota}}
\newcommand\!kappa{{\boldsymbol\kappa}}
\newcommand\!lambda{{\boldsymbol\lambda}}
\newcommand\!mu{{\boldsymbol\mu}}
\newcommand\!nu{{\boldsymbol\nu}}
\newcommand\!xi{{\boldsymbol\xi}}
\newcommand\!pi{{\boldsymbol\pi}}
\newcommand\!rho{{\boldsymbol\rho}}
\newcommand\!sigma{{\boldsymbol\sigma}}
\newcommand\!tau{{\boldsymbol\tau}}
\newcommand\!upsilon{{\boldsymbol\upsilon}}
\newcommand\!phi{{\boldsymbol\phi}}
\newcommand\!chi{{\boldsymbol\chi}}
\newcommand\!psi{{\boldsymbol\psi}}
\newcommand\!omega{{\boldsymbol\omega}}
\newcommand\!varepsilon{{\boldsymbol\varepsilon}}
\newcommand\!vartheta{{\boldsymbol\vartheta}}
\newcommand\!varpi{{\boldsymbol\varpi}}
\newcommand\!varrho{{\boldsymbol\varrho}}
\newcommand\!varsigma{{\boldsymbol\varsigma}}
\newcommand\!varphi{{\boldsymbol\varphi}}
\newcommand\!Gamma{{\boldsymbol\Gamma}}
\newcommand\!Delta{{\boldsymbol\Delta}}
\newcommand\!Theta{{\boldsymbol\Theta}}
\newcommand\!Lambda{{\boldsymbol\Lambda}}
\newcommand\!Xi{{\boldsymbol\Xi}}
\newcommand\!Pi{{\boldsymbol\Pi}}
\newcommand\!Sigma{{\boldsymbol\Omega\eigma}}
\newcommand\!Upsilon{{\boldsymbol\Upsilon}}
\newcommand\!Phi{{\boldsymbol\Phi}}
\newcommand\!Psi{{\boldsymbol\Psi}}
\newcommand\!Omega{{\boldsymbol\Omega}}
\begin{document}

\title [Finite element for Koiter shell]
{Analysis of a discontinuous Galerkin method\\ for Koiter shell}


\author{Sheng Zhang}

\thanks{Department of Mathematics, Wayne State University, Detroit, MI 48202}

\begin{abstract}
We present an analysis for a mixed finite element method for the bending problem of Koiter shell.
We derive an error estimate showing that when the geometrical coefficients of the shell mid-surface satisfy certain conditions
the finite element method has the optimal order of accuracy, which is uniform with respect to the shell
thickness. Generally, the error estimate shows how the accuracy  is affected by the shell geometry and thickness.
It suggests that  to achieve optimal rate of convergence, the triangulation should be
properly refined in regions where the shell geometry changes dramatically.
The analysis is carried out for a balanced method in which the normal component of displacement is approximated by discontinuous
piecewise cubic polynomials, while the tangential components are approximated by
discontinuous piecewise quadratic polynomials, with some enrichment on elements that
have edges on the free boundary.
Components of the membrane stress 
are approximated by continuous piecewise linear functions.

\vspace{12pt}

\noindent{\sc Key words.} Koiter shell model, membrane locking, mixed finite elements, discontinuous Galerkin method.
\newline \noindent{\sc Subject classification.} 65N30, 65N12, 74K25.
\end{abstract}
\maketitle


\section{Introduction}

We analyze the accuracy of a mixed finite element method for the Koiter shell model, in which the shell displacement variables are
approximated by discontinuous piecewise polynomials, while the membrane stress components are approximated by
continuous piecewise polynomials.
This is a discontinuous Galerkin (DG) method in terms of the primary variables of Koiter shell.
The finite elements for various variables  form a balanced combination in the sense that
except for some minor tangential displacement enrichments required by stability on the free edge of the shell,
every degree of freedom contributes to the accuracy of the finite element solution.
DG method provides a more general approach
and offers  more flexibilities in choosing finite element spaces and degree of freedoms.
It is believed to have a potential to help resolve some difficult problems in numerical computation of elastic shells \cite{AF-RM-DG, A-RM-DG}.
In this paper, we show that DG method indeed has advantages in reducing the troublesome
membrane locking in computation of shell bending problems.
We prove an error estimate showing that when the geometry of a shell satisfies certain conditions the method yields
a finite element solution that has the
optimal order of accuracy that could be achieved by the best approximation from the finite element functions.
Thus it is free of membrane locking.
When such condition is not satisfied, the estimate shows how the accuracy is affected
by the geometrical coefficients and how to adjust the finite element mesh to accommodate the curved shell mid-surface such that
the finite element solution
achieves the optimal order of accuracy.
Particularly, the estimate suggests that some refinements for the finite element mesh should be done
where a shell changes geometry abruptly.

We consider a thin shell of thickness $2\eps$.
Its middle surface $\t\Omega\subset\R^3$ is the image of a two-dimensional coordinate domain $\Omega\subset\R^2$ through
the parameterization mapping $\!phi:\Omega\to\t\Omega$. This mapping furnishes the curvilinear coordinates on the surface $\t\Omega$.
Subject to loading forces and boundary conditions, the shell would be deformed to a stressed state.
The Koiter shell model uses displacement of the shell mid-surface as the primary variables.
The tangential displacement is represented by its covariant components $u\e_{\alpha}$ ($\alpha\in\{1,2\}$), and normal displacement is a scalar $w\e$.
The superscript $\eps$ indicates dependence on the shell thickness.
To deal with membrane locking, we also introduce
the symmetric membrane stress tensor scaled by multiplying the factor $\eps^{-2}$  as an independent variable,
which is given in terms its contravariant components $\M^{\eps\alpha\beta}$ ($\alpha, \beta\in\{1,2\}$).
All the six functions are two-variable functions defined on $\Omega$.
For a bending dominated shell problem, under a suitable scaling on the loading force, these functions converge to finite limits when $\eps\to 0$.
This justifies our choice of approximating them as independent variables.
For a curved shell deformation to be bending dominated,  the shell needs to have a portion of its boundary free, 
or subject to
force conditions. It is known that a totally clamped or simply supported  elliptic, parabolic, or hyperbolic
shell does not allow bending dominated behavior \cite{CiarletIII}.
We assume the shell boundary is divided into three parts, on which the shell is clamped, simply supported, and free of displacement constraint,
respectively, and the free part is not empty.

We assume that the coordinate domain $\Omega$ is a polygon. On $\Omega$, we introduce a triangulation $\T_h$
that is shape regular but not necessarily quasi-uniform. The shape regularity of a triangle is defined as the ratio of the diameter
of its smallest circumscribed circle and the diameter of its largest inscribed circle. The shape regularity of a triangulation is the maximum of
shape regularities of all its triangular elements. When we say a $\T_h$ is shape regular we mean that the shape regularity
of $\T_h$ is bounded by an absolute constant $\K$.  Shape regular meshes allow local refinements, and thus have the potential to
more efficiently resolve the ever increasing singularities in solutions of the shell model.
We use $\T_h$ to
denote the set of all the (open) triangular
elements, and let $\Omega_h=\cup_{\tau\in\T_h}\tau$.
We use $h_\tau$ to denote the diameter of the element $\tau$.
We analyze a particular finite element method, in which
we use totally discontinuous piecewise quadratic polynomials to approximate the
tangential displacement components $u\e_\alpha$, use totally discontinuous
cubic polynomials to approximate the normal deflection $w\e$, and use continuous piecewise linear functions to
approximate the scaled membrane stress tensor components $\M^{\eps\alpha\beta}$.
If an element $\tau$ has one edge that lies on the free boundary of the shell, we need to enrich the space of quadratic
polynomials for the tangential displacement by adding two cubic polynomials. If an element has two edges on the free 
boundary, we need to use the the full cubic polynomials for the tangential displacements. 
The finite element model
yields an approximation $u^h_{\alpha}, w^h, \M^{h \alpha\beta}$,
and we have the error estimate that 
\begin{multline}\label{estimate1}
\|(\!u\e-\!u^h, w\e-w^h)\|
\le C\left[1+\eps^{-1}\max_{\tau\in\T_h; \alpha,\beta,\lambda\in\{1,2\}}\left(h^{3}_\tau|\Gamma^{\lambda}_{\alpha\beta}|_{2,\infty,\tau}+
h^{5}_\tau|b_{\alpha\beta}|_{3,\infty,\tau}\right)
\right]\\
\left[
\sum_{\tau\in\T_h}h^4_{\tau}\left(\sum_{\alpha=1}^2\|u\e_\alpha\|^2_{3,\tau}+\|w\e\|^2_{4,\tau}+
\sum_{\alpha,\beta=1}^2\|\M^{\eps\alpha\beta}\|^2_{2,\tau}
\right)\right]^{1/2}.
\end{multline}
Here, $C$ is a constant that could be dependent on the shape regularity $\K$  of $\T_h$ and the shell mid-surface, but otherwise
it is independent of the finite element mesh, the shell thickness, and the shell model solution.
For a subdomain $\tau\subset\Omega$, we use 
$\|\cdot\|_{k,\tau}$ and $|\cdot|_{k,\tau}$ to denote the norm and semi norm of the Sobolev space $H^k(\tau)$,
and use $\|\cdot\|_{k,\infty,\tau}$ and $|\cdot|_{k,\infty,\tau}$ to denote that of $W^{k,\infty}(\tau)$.
When $\tau=\Omega$, the space $H^k(\Omega)$ will be simply written as $H^k$.
The functions $\Gamma^{\lambda}_{\alpha\beta}$ are
the Christoffel symbols and $b_{\alpha\beta}$ the covariant components of curvature tensor of the parameterized shell middle surface $\t\Omega$. 
These will be called geometrical coefficients of the shell.
The left hand side norm is the  piecewise $H^1$ norm for $u\e_\alpha-u^h_{\alpha}$ and piecewise $H^2$ for $w\e-w^h$,
plus penalties on discontinuity and violation of the essential boundary conditions by the finite element
approximation, see \eqref{Hh-norm} below.
It is noted that we have no estimate in for the error $\M^{\eps\alpha\beta}-\M^{h\alpha\beta}$ in \eqref{estimate1},
while $\M^{\eps\alpha\beta}$ is involved in the
right hand side, which usually has very strong internal and boundary layers. Some weaker estimate for this error will be given below.

The quantity in the first bracket in the right hand side of \eqref{estimate1}
is independent of the shell model solution. It, however, involves the geometrical coefficients,
the triangulation $\T_h$, and the shell thickness $\eps$.
If the curvature tensor components $b_{\alpha\beta}$
are piecewise quadratic functions, and the Christoffel symbols $\Gamma^\lambda_{\alpha\beta}$
are piecewise linear functions, then the quantity is completely
independent of $\eps$.
Generally, $\eps$ has some negative effect. To keep the quantity bounded,
the finite element mesh needs to be relatively fine
where the geometrical coefficients has greater second or third order derivatives.
Where the shell is flat, the thickness $\eps$
does not impose much restriction on the mesh size.
In any case, the quantity in the first bracket is bounded if $h^3=\O(\eps)$, with $h$ being the maximum size of finite elements.
The finite element mesh, the shell shape, and its thickness together should satisfy a condition
such that the quantity in the first bracket is bounded.
The method reduces membrane locking
quite significantly, which otherwise would amplify the error by a factor of the magnitude $\eps^{-1}$.

To assess the accuracy of the finite element solution,
we scale the loading force densities in the shell model by multiplying them with the factor $\eps^2$. (Such scaling will not affect relative errors
of numerical solutions.)
Then we have the limiting behaviors that
when $\eps\to 0$,
$u\e_\alpha\to u^0_\alpha$ in $H^1$, $w\e\to w^0$ in $H^2$, and $\M^{\eps\alpha\beta}$
converges to a limit in a weaker norm.
The shell problem is bending dominated if and only if $(u^0_\alpha, w^0)\neq 0$.
In this case, the smallness of the error in the left hand side of \eqref{estimate1} means small relative error
of the approximation of the primary variables, thus accuracy of the finite element model.
The asymptotic behaviors of $u\e_\alpha$, $w\e$, and $\M^{\eps\alpha\beta}$ , in terms of convergence in strong or weak norms,
mean that they
tend to limiting functions in major  part of the domain, while
may exhibit boundary or internal layers that occur in slimmer and slimmer portions of the domain.
If the finite element functions are capable of
resolving such  singular layers,
the finite element solution would be accurate and free of membrane locking.
It is noted that the quantity in the second bracket in the right hand side of \eqref{estimate1}
is the error estimate of the best approximations of $u\e_\alpha$, $w\e$, and $\M^{\eps\alpha\beta}$
from their finite element functions in the piecewise  $H^1$-norm, piecewise $H^2$-norm, and $L^2$-norm, respectively.

If the limit $(u^0_\alpha, w^0)$ is zero,
the shell deformation is not bending dominated. In this case
we do not have the accuracy of the finite element solution measured in the aforementioned relative error.
In computation, one would obtain finite element solutions that are very small in the norm
in the left hand side of \eqref{estimate1}. The theory implies that
such smallness must not be due to numerical membrane locking.
But rather, it indicates that the shell problem is not bending dominated, and
needs to be treated differently, in which case standard finite element methods could be better.
Whether a shell problem is bending dominated, membrane dominated, or intermediate
is determined by the shell shape, loading forces, and boundary conditions \cite{CiarletIII, Bathe-book}.
Membrane locking is the most critical issue in bending dominated problems \cite{ABrezzi2}.

There is a huge literature on scientific computing and numerical analysis of shell models,  see the books \cite{Bernadou, CiarletIII, Hughes-book, Bathe-book}
for reviews. There are several theories on locking free finite elements that are relevant to this paper.
In \cite{ABrezzi2}, a locking free estimate was established under the assumption that the geometrical coefficients  are piecewise constants.
In \cite{Suri}, similar result was proved for some higher order finite elements under the assumption that
the geometrical coefficients are higher order piecewise polynomials.
These papers did not say how the finite element accuracy would be affected had the assumptions on the geometrical coefficients  not been met.
In \cite{Bramble-Sun2}, a uniform accuracy of a finite element method was proved for Naghdi shell model
under a condition of the form $h^2\le \O(\eps)$, with some bubble functions introduced to enhance the stability.
Our result seems more general than these.
We have made an effort not to assume the finite mesh to be quasi-uniform.  This is important for
the shell model for which layers of singularities are very common in its solution, for which quasi-uniform mesh
is not practical.
The stability achieved in this paper are mainly due to the flexibility of discontinuous approximations. 


The paper is arranged as follows. In Section~\ref{SHELL} we recall the shell model in the standard variational form, 
and write it in a mixed form by introducing the scaled membrane stress as a new variable. 
An asymptotic estimate on the model solution, and an equivalent estimate on the solution of the mixed model
are given in an abstract setting. The latter will also be used in analysis of the finite element model.
In Section~\ref{principle}, we introduce the finite element model that is consistent with the mixed 
form of the Koiter shell. The consistency is verified in the appendix.
In Section~\ref{KornOnShell}, we prove a discrete version of Korn's inequality on shells.
This inequality plays a fundamental role in the error analysis, which is carried out in Section~\ref{ErrorAnalysis}.

For a fixed $\eps$, the shell model solution will be assumed to have the regularity that $u\e_\alpha\in H^3$ and 
$w\e\in H^4$. Of course, when $\eps\to 0$ these functions could  go to 
infinity in these norms.
Throughout the paper, $C$ is a constant that could be dependent on the shell mid-surface and the shape regularity $\K$
of the triangulation  $\T_h$. It is otherwise independent of the triangulation and shell thickness $\eps$. We shall simply say 
that the constant $C$ is independent of $\T_h$ and $\eps$.
For such a constant $C$, we use $A\lesssim B$ to denote $A\le CB$. If $A\lesssim B$ and $A\lesssim B$, we write $A\simeq B$.
Superscripts indicate contravariant components of vectors and tensors, and subscripts indicate covariant components.
Greek sub and super scripts, except $\eps$,  
take their values in $\{1,2\}$. Latin scripts take their values in $\{1,2,3\}$. Summation rules with respect to repeated sub and
super scripts will also be used. A vector with covariant  components $u_\alpha$ or contravariant components $u^\alpha$ 
is represented by the bold face letter $\!u$. 
A tensor with components $\M^{\alpha\beta}$ will be simply called $\M$.


\section{The shell model}
\label{SHELL}
Let $\t\Omega\subset\R^3$
be the middle surface of a shell of thickness $2\eps$.
It is  the image of a domain
$\Omega\subset\R^2$ through a mapping $\!phi$.
The coordinates $x_\alpha\in\Omega$
then furnish the curvilinear coordinates on $\t\Omega$.
We assume that at any point on the surface,
along the coordinate lines,
the two tangential vectors
$\!a_{\alpha}={\partial\!phi}/{\partial x_{\alpha}}$
are linearly independent.
The unit vector
$\!a_3=(\!a_1\x\!a_2)/|\!a_1\x\!a_2|$ is normal to $\t\Omega$.
The triple $\!a_i$ furnishes the covariant basis on $\t\Omega$.
The contravariant basis
$\!a^i$ is defined by the relations
$\!a^{\alpha}\cdot\!a_{\beta}=\delta^{\alpha}_{\beta}$ and $\!a^3=\!a_3$,
in which $\delta^{\alpha}_{\beta}$ is the Kronecker delta.
It is obvious that $\!a^{\alpha}$ are also tangent to the surface.
The metric tensor has the covariant components
$a_{\alpha\beta}=\!a_{\alpha}\cdot\!a_{\beta}$,  the determinant of
which is denoted by $a$. The contravariant components
are given by
$a^{\alpha\beta}=\!a^{\alpha}\cdot\!a^{\beta}$.
The curvature tensor
has covariant components
$b_{\alpha\beta}=\!a_3\cdot\partial_{\beta}\!a_{\alpha}$, whose
mixed components are $b^{\alpha}_{\beta}=a^{\alpha\gamma}b_{\gamma\beta}$.
The symmetric tensor $c_{\alpha\beta}=b^\gamma_\alpha b_{\gamma\beta}$ is called the third 
fundamental form of the surface.
The Christoffel symbols
are defined by
$\Gamma^{\gamma}_{\alpha\beta}
=\!a^{\gamma}\cdot\partial_{\beta}\!a_{\alpha}$,
which are symmetric with respect to the subscripts. The derivative of a scalar is a covariant vector.
The covariant derivative of a vector or tensor is a higher order tensor.
The formulas below will be used in the following.
\begin{equation}\label{covariant-derivative}
\begin{gathered}
u_{\alpha|\beta}=\partial_{\beta}u_{\alpha}-\Gamma^{\gamma}_{\alpha\beta}
u_{\gamma},\quad
(\partial_\alpha w)|_\beta=\partial_{\alpha\beta}w-\Gamma^{\lambda}_{\alpha\beta}\partial_{\lambda}w,\\
\sigma^{\alpha\beta}|_{\gamma}=\partial_{\gamma}\sigma^{\alpha\beta}
+\Gamma^{\alpha}_{\gamma\lambda}\sigma^{\lambda\beta}
+\Gamma^{\beta}_{\gamma\tau}\sigma^{\alpha\tau},\quad
b^{\gamma}_{\alpha|\beta}=\partial_{\beta}b^{\gamma}_{\alpha}
+\Gamma^{\gamma}_{\lambda\beta}b^{\lambda}_{\alpha}-
\Gamma^{\tau}_{\alpha\beta}b^{\gamma}_{\tau},\\
\rho_{\alpha\beta|\gamma}=\partial_{\gamma}\rho_{\alpha\beta}
-\Gamma^{\lambda}_{\gamma\alpha}\rho_{\lambda\beta}
-\Gamma^{\tau}_{\gamma\beta}\rho_{\alpha\tau},\\
\rho_{\alpha\beta|\gamma\delta}=\partial_\delta\rho_{\alpha\beta|\gamma}
-
\Gamma^\tau_{\alpha\delta}\rho_{\tau\beta|\gamma}
-
\Gamma^\tau_{\beta\delta}\rho_{\alpha\tau|\gamma}
-
\Gamma^\tau_{\gamma\delta}\rho_{\alpha\beta|\tau}.
\end{gathered}
\end{equation}
Product rules for differentiations, like
$(\sigma^{\alpha\lambda}u_{\lambda})|_{\beta}=
\sigma^{\alpha\lambda}|_{\beta}u_{\lambda}
+\sigma^{\alpha\lambda}u_{\lambda|\beta}$,
are valid. For more information see \cite{GZ}.

The mapping $\!phi$ is a one-to-one correspondence between $\Omega$ and $\t\Omega$. It maps a subdomain $\tau\subset\Omega$
to a subregion $\t\tau=\!phi(\tau)\subset\t\Omega$. 
A function $f$ defined on the shell middle surface will be identified with a function defined on $\Omega$ through the mapping $\!phi$ and denoted 
by the same notation. Thus $f(\!phi(x_\alpha))=f(x_\alpha)$. The integral over $\t\tau$ with respect to the surface area element 
is related to the double integral on $\tau$ by
\begin{equation*}
\int_{\t\tau}fd\t S=\int_\tau f\sqrt a dx_1dx_2.
\end{equation*}
We will ignore the area element $d\t S$ in the integral over the surface $\t\tau$, and 
simply write the left hand side integral as  $\displaystyle\int_{\t\tau}f$, and ignore the $dx_1dx_2$ in integral on the subdomain $\tau$, 
and write the right hand side integral as $\displaystyle\int_{\tau}f\sqrt a$.
The mapping $\!phi$ maps a curve $e\subset\overline\Omega$ to a curve $\t e=\!phi(e)$ contained in the closure of 
$\t\Omega$. Let $x_\alpha(s)$ be the arc length parameterization
of $e$, then $\!phi(x_\alpha(s))$ is a parameterization of $\t e$, but not in terms of the arc length of $\t e$. Let $\t s$ be the arc length 
parameter of $\t e$,  then the line integrals are related by
\begin{equation*}
\int_{\t e}fd\t s=\int_ef\sqrt{\sum_{\alpha,\beta=1, 2}a_{\alpha\beta}\frac{dx_\alpha}{ds}\frac{dx_\beta}{ds}}ds.
\end{equation*}
Similar to surface integrals, we will ignore the $d\t s$ in the left hand side line integral and the $ds$ in the right hand side line  integral.
For any line element $e$, area element $\tau$, and function $f$ that 
make the following integrals meaningful, we have 
\begin{equation*}
\int_{\t\tau}|f|\simeq\int_\tau|f|,\quad
\int_{\t e}|f|\simeq\int_e|f|.
\end{equation*}

\subsection{Koiter shell model}
The Koiter shell model \cite{Koiter} uses displacement of the shell mid-surface as the primary variable. A displacement $u_\alpha\!a^\alpha+w\!a^3$
deforms the surface $\t\Omega$ and changes the its curvature and metric tensors. The linearized change in curvature tensor is the bending
strain tensor, and linearized change of metric tensor is the membrane strain tensor. They, respectively, are
\begin{equation}\label{K-curvature}
\rho_{\alpha\beta}(\!u, w)=\partial^2_{\alpha\beta}w-\Gamma^{\gamma}_{\alpha\beta}
\partial_{\gamma}w+b^{\gamma}_{\alpha|\beta}u_{\gamma}+
b^{\gamma}_{\alpha}u_{\gamma|\beta}+b^{\gamma}_{\beta}u_{\gamma|\alpha}-c_{\alpha\beta}w,
\end{equation}
\begin{equation}\label{K-metric}
\gamma_{\alpha\beta}(\!u,w)=
\frac12(u_{\alpha|\beta}+u_{\beta|\alpha})
-b_{\alpha\beta}w.
\end{equation}

The loading forces on the shell body and upper and lower surfaces
enter the shell model as resultant loading forces per unit area  on the shell middle surface,
of which the tangential force density is
$p^{\alpha}\!a_{\alpha}$ and transverse force density $p^3\!a_3$.
Let the boundary $\partial\t\Omega$ be divided to $\partial^D\t\Omega\cup\partial^S\t\Omega\cup\partial^F\t\Omega$.
On $\partial^D\t\Omega$ the shell is clamped, on $\partial^S\t\Omega$ the shell is simply supported, and
on $\partial^F\t\Omega$ the shell free of displacement constraint and subject to force only.
(There are $16$ different ways to specify boundary conditions at any point on the shell boundary, of which we consider the three most 
frequently studied.)
The shell model is
defined in the Hilbert space
\begin{multline}\label{K-space}
H=\{(\!v, z)\in \!H^1\x H^2\ | v_\alpha \text{ and } z \text{ are }0\  \text{on}\ \partial^D\Omega\cup\partial^S\Omega, \\
\text{ and the normal derivative of  }z\text{ is }0\
\text{on}\ \partial^D\Omega\}.
\end{multline}
The model determines a unique $(\!u\e, w\e)\in H$
such that
\begin{multline}\label{K-model}
\frac13\int_{\t\Omega}
a^{\alpha\beta\lambda\gamma}\rho_{\lambda\gamma}(\!u\e, w\e)
\rho_{\alpha\beta}
(\!v, z)
+\eps^{-2}\int_{\t\Omega}
a^{\alpha\beta\lambda\gamma}\gamma_{\lambda\gamma}(\!u\e,w\e)
\gamma_{\alpha\beta}(\!v,z)\\
=
\int_{\t\Omega}
(p^{\alpha}v_{\alpha}+
p^3z)
+\int_{\partial^S\t\Omega}mD_{\!n}z
+\int_{\partial^F\t\Omega}\left(q^\alpha v_\alpha+q^3z+mD_{\!n}z\right)
\ \ \forall\
(\!v,z) \in H.
\end{multline}
Here, $q^i$ and $m$ are resultant loading functions on the shell boundary, which can be calculated from force resultants and
moment resultants on the shell edge \cite{Koiter}. The scalar $z$ can be viewed as
defined on $\t\Omega$. We let $\!n=n^\alpha\!a_\alpha$ be the unit outward normal to $\partial\t\Omega$ that is tangent to
$\t\Omega$. The derivative $D_{\!n}z=n^\alpha\partial_\alpha z$ is the directional derivative  in the direction of $\!n$ with respect to arc length.
The fourth order contravariant tensor
$a^{\alpha\beta\gamma\delta}$ is the elastic tensor of the shell,
defined by
\begin{equation*}
a^{\alpha\beta\gamma\delta}=\mu (a^{\alpha\gamma}a^{\beta\delta}+a^{\beta\gamma}a^{\alpha\delta})+
\frac{2\mu\lambda}{2\mu+\lambda}
a^{\alpha\beta}a^{\gamma\delta}.
\end{equation*}
Here, $\lambda$ and $\mu$ are the Lam\'e coefficients of the elastic material, which we assume to be constant.
The compliance tensor of the shell defines the inverse operator of the elastic tensor, given by
\begin{equation*}
a_{\alpha\beta\gamma\delta}=\frac{1}{2\mu}\left[
\frac12(a_{\alpha\delta}a_{\beta\gamma}+
a_{\beta\delta}a_{\alpha\gamma})-\frac{\lambda}{2\mu+3\lambda}a_{\alpha\beta}a_{\gamma\delta}
\right]
\end{equation*}
For symmetric tensors $\sigma^{\alpha\beta}$ and $\gamma_{\alpha\beta}$,
$\sigma^{\alpha\beta}=a^{\alpha\beta\gamma\delta}\gamma_{\gamma\delta}$ if and only if
$\gamma_{\alpha\beta}=a_{\alpha\beta\gamma\delta}\sigma^{\gamma\delta}$.
The elastic tensor is a continuous and positive definite operator in the sense that there is a positive constant $C$ depending
on the shell surface and shell material such that for any covariant tensors $\gamma_{\alpha\beta}$ and $\rho_{\alpha\beta}$,
\begin{equation}\label{elastic-tensor-equiv}
\begin{gathered}
a^{\alpha\beta\gamma\delta}\gamma_{\alpha\beta}\rho_{\gamma\delta}\le C\left(\sum_{\alpha, \beta=1,2}\gamma_{\alpha\beta}^2\right)^{1/2}
\left(\sum_{\alpha, \beta=1,2}\rho_{\alpha\beta}^2\right)^{1/2},\\
\sum_{\alpha, \beta=1,2}\gamma_{\alpha\beta}^2\le C a^{\alpha\beta\gamma\delta}\gamma_{\alpha\beta}\gamma_{\gamma\delta}.
\end{gathered}
\end{equation}
The compliance tensor has the similar property that for any contravariant tensors $\M^{\alpha\beta}$ and $\N^{\alpha\beta}$,
\begin{equation}\label{compliance-tensor-equiv}
\begin{gathered}
a_{\alpha\beta\gamma\delta}\M^{\alpha\beta}\N^{\gamma\delta}\le C\left(\sum_{\alpha, \beta=1,2}{\M^{\alpha\beta}}^2\right)^{1/2}
\left(\sum_{\alpha, \beta=1,2}{\N^{\alpha\beta}}^2\right)^{1/2},\\
\sum_{\alpha, \beta=1,2}{\M^{\alpha\beta}}^2\le C a_{\alpha\beta\gamma\delta}\M^{\alpha\beta}\M^{\gamma\delta}.
\end{gathered}
\end{equation}
The model  \eqref{K-model} has a unique solution in the space $H$ \cite{BCM}. When $\eps\to 0$, its solution
behaves in very different manners, depending on whether it is bending dominated, membrane dominated, or intermediate.
For bending dominated shell problems, when the resultant loading functions $p^\alpha$ and $p^3$ are independent of $\eps$,
the  model solution converges to a nonzero limit that solves the limiting bending model.
We show below that the scaled membrane stress
also converges to  a limit.

As did in \cite{ABrezzi2}, to make the analysis of the finite element method easier,
we split a small portion of the membrane part and add it to the bending part,
replace $\eps^{-2}-\frac13$  by $\eps^{-2}$,  introduce the scaled membrane stress
$\M^{\eps\alpha\beta}=
\eps^{-2}a^{\alpha\beta\lambda\gamma}\gamma_{\lambda\gamma}(\!u\e, w\e)$
as a new variable, and
write the model in a mixed form, in which the functions $(\!u\e, w\e)\in H$ and $\M^{\eps\alpha\beta}\in V$
(that is the space of tensors whose components are functions in
$L^2$) are determined as the solution of
\begin{equation}\label{K-P-model}
\begin{gathered}
\frac13\int_{\t\Omega}
\left[a^{\alpha\beta\lambda\gamma}\rho_{\lambda\gamma}(\!u\e, w\e)
\rho_{\alpha\beta}(\!v, z)
+
a^{\alpha\beta\lambda\gamma}\gamma_{\lambda\gamma}(\!u\e,w\e)
\gamma_{\alpha\beta}(\!v,z)\right]
+\int_{\t\Omega}
\M^{\eps\alpha\beta}
\gamma_{\alpha\beta}(\!v,z)\hfill
\\
\hfill
=\int_{\t\Omega}
(p^{\alpha}v_{\alpha}+
p^3z)
+\int_{\partial^S\t\Omega}mD_{\!n}z
+\int_{\partial^F\t\Omega}\left(q^\alpha v_\alpha+q^3z+mD_{\!n}z\right)
\ \ \forall\
(\!v,z) \in H,
\\
\int_{\t\Omega}\N^{\alpha\beta}\gamma_{\alpha\beta}(\!u\e, w\e)-
\eps^2\int_{\t\Omega}a_{\alpha\beta\lambda\gamma}\M^{\eps\alpha\beta}\N^{\lambda\gamma}=0\hfill \forall\ \N\in V.
\end{gathered}
\end{equation}
This mixed model  is the basis for the finite element method.
In the next subsection, we present a theory in an abstract format, which is applicable to the Koiter model in either the original form \eqref{K-model}
or the mixed form \eqref{K-P-model}. The results will also be used in analysis of the finite element model.

\subsection{Asymptotic estimates on the shell model}
\label{abstract}
Notations in this sub-section are independent of the rest of the paper.
The Koiter shell model \eqref{K-model} can be fitted in the abstract equation \eqref{prob1as} below.
Let $H$,
$U$, and $V$ be Hilbert spaces, $A$ and $B$ be
linear continuous operators from $H$ to
$U$ and $V$, respectively.
We assume
\begin{equation}\label{equiva1as}
\|Au\|_U+\|Bu\|_V\simeq\|u\|_H\ \ \forall \ u\in H.
\end{equation}
For any $\eps>0$ and $f\in H^*$, the dual space of $H$, there is a unique $u\e\in H$, such that
\begin{equation}\label{prob1as}
(Au\e,Av)_U+\eps^{-2}(Bu\e,Bv)_V=\langle f,v\rangle
\quad \forall\ v\in H.
\end{equation}
We let $\ker B\subset H$ be the kernel of the operator $B$, and
let $W\subset V$ be the range of $B$. We define a norm on $W$ by $\|w\|_{W}=\inf_{v\in H, Bv=w}\|v\|_U$ $\forall\ w\in W$, such that
$W$ is isomorphic to $H/\ker B$.
We let $\overline W$ be the closure of $W$ in $V$.
Thus $W$ is a dense subset of $\overline W$, and  $(\overline W)^*$ is dense  in $W^*$.
We need a weaker norm on $W$. For $w\in W$, we define $\|w\|_{\overline{\overline W}}=\|\pi_{\overline W}w\|_{W^*}$. Here $\pi_{\overline W}:\overline W\to(\overline W)^*$
is the inverse of Riesz representation. The relations among these norms is that for any $w\in W$,
$\|w\|_{\overline{\overline W}}\le \|\pi_{\overline W}w\|_{(\overline W)^*}=\|w\|_{\overline W}=\|w\|_V\le \|w\|_W$.
We let $\overline{\overline W}$ be the closure of $W$ in this new norm. This closure is isomorphic to $W^*$. We let $j_{[W^*\to \overline{\overline W}]}$
be the isomorphic mapping from $W^*$ to $\overline{\overline W}$.
We assume that $f|_{\ker B}\ne 0$, such that the limiting problem
\begin{equation}\label{limitas}
(Au^0, Av)_U=\langle f, v\rangle\ \ \forall\ v\in\ker B
\end{equation}
has a nonzero solution $u^0\in\ker B$.
\begin{thm}
For the solution of \eqref{prob1as},
we have the asymptotic behavior that $\lim_{\eps\to 0}\|u\e-u^0\|=0$. Furthermore, there is a unique $\M\in \overline{\overline W}$ such that
$\lim_{\eps\to 0}\|\eps^{-2}Bu\e-\M\|_{\overline{\overline W}}=0$.
\end{thm}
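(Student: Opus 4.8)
The plan is to split the statement into two assertions: convergence of $u\e$ to $u^0$, and convergence of the scaled stress $\eps^{-2}Bu\e$ in the weak norm $\overline{\overline W}$. For the first, the standard energy argument applies. Taking $v=u\e$ in \eqref{prob1as} and using $\langle f,u\e\rangle\le\|f\|_{H^*}\|u\e\|_H$ together with \eqref{equiva1as} gives the a priori bound $\|Au\e\|_U^2+\eps^{-2}\|Bu\e\|_V^2\lesssim\|u\e\|_H$, hence $\|u\e\|_H\lesssim 1$ and $\|Bu\e\|_V\lesssim\eps$. So along any sequence $\eps\to 0$ a subsequence has $u\e\rightharpoonup u^*$ weakly in $H$, and since $B$ is continuous $Bu^*=0$, i.e. $u^*\in\ker B$. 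Passing to the limit in \eqref{prob1as} tested against $v\in\ker B$ (where the $\eps^{-2}$ term vanishes identically since $Bv=0$) yields that $u^*$ solves \eqref{limitas}; by coercivity of $(A\cdot,A\cdot)_U$ on $\ker B$ (a consequence of \eqref{equiva1as} restricted to $\ker B$) the limit problem has a unique solution, so $u^*=u^0$ and the whole net converges weakly. To upgrade to norm convergence, I would show $\limsup\|Au\e\|_U\le\|Au^0\|_U$: take $v=u\e-u^0$ in \eqref{prob1as}, note $Bu^0=0$ so the cross term $\eps^{-2}(Bu\e,Bu^0)_V$ drops, and the remaining $\eps^{-2}\|Bu\e\|_V^2\ge0$ term can be discarded to get $(Au\e,Au\e-Au^0)_U\le\langle f,u\e-u^0\rangle\to 0$; combined with weak convergence $Au\e\rightharpoonup Au^0$ this forces $\|Au\e\|_U\to\|Au^0\|_U$ and hence $Au\e\to Au^0$ strongly, and then \eqref{equiva1as} (applied to $u\e-u^0$, whose $B$-image already tends to zero) gives $\|u\e-u^0\|_H\to 0$.

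For the second assertion, set $\M\e=\eps^{-2}Bu\e\in W$. The key identity is obtained by rewriting \eqref{prob1as} as $(\M\e,Bv)_V=\langle f,v\rangle-(Au\e,Av)_U$ for all $v\in H$. Since the right side depends on $v$ only through $Bv\in W$ (because the map $H\ni v\mapsto\langle f,v\rangle-(Au\e,Av)_U$ — one must check — annihilates... actually it does not annihilate $\ker B$, so this needs care), I instead read this as an estimate on the functional $\Lambda\e\in W^*$ defined by $\langle\Lambda\e,w\rangle=(\M\e,w)_V$ restricted appropriately. More precisely: for $w\in W$ pick $v\in H$ with $Bv=w$ and $\|v\|_U=\|w\|_W$; then $|(\M\e,w)_V|\le|\langle f,v\rangle|+\|Au\e\|_U\|Av\|_U\lesssim\|v\|_H\lesssim\|v\|_U=\|w\|_W$ after using \eqref{equiva1as} and the uniform bound on $\|u\e\|_H$ — wait, $\|v\|_H$ need not be controlled by $\|v\|_U$ alone. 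The correct route is to choose the minimizing $v$ with $Bv=w$; then $\|v\|_U=\|w\|_W$ but also, since $Av$ is not free, one uses that the energy-minimizing lift has $\|Av\|_U$ bounded by the $W$-norm by the closed range / open mapping theorem. This shows $\|\M\e\|_{\overline{\overline W}}=\|\pi_{\overline W}\M\e\|_{W^*}\lesssim 1$ uniformly in $\eps$.

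To get convergence rather than just boundedness, I would establish that $(\M\e)$ is Cauchy in $\overline{\overline W}$. Subtracting the relations for $\eps$ and $\delta$: $(\M\e-\M^\delta,Bv)_V=-(Au\e-Au^\delta,Av)_U$ for all $v\in H$, and from the first part $Au\e-Au^\delta\to 0$ in $U$; choosing the minimizing lift as above gives $\|\M\e-\M^\delta\|_{\overline{\overline W}}\lesssim\|Au\e-Au^\delta\|_U\to 0$. Hence $\M\e$ converges to some $\M$ in $\overline{\overline W}$, which by definition is isomorphic to $W^*$ via $j_{[W^*\to\overline{\overline W}]}$, so $\M\in\overline{\overline W}$; uniqueness of the limit is automatic. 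The main obstacle I anticipate is the functional-analytic bookkeeping in the second part: the three-tier norm structure $\|\cdot\|_W\ge\|\cdot\|_V=\|\cdot\|_{\overline W}\ge\|\cdot\|_{\overline{\overline W}}$ must be used exactly right, and one must justify that the energy-minimizing lift $v$ of a given $w\in W$ satisfies a bound of the form $\|Av\|_U\lesssim\|w\|_W$ — this is where the closed-range consequence of \eqref{equiva1as} enters and is the step most likely to need a careful argument rather than a one-line citation.
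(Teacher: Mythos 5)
Your argument takes a genuinely different route from the paper, and it is essentially sound, but you left one self-flagged gap open that you could in fact have closed immediately.

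The paper's proof is short by design: it subtracts the limit problem from \eqref{prob1as} to arrive at a singular perturbation problem of the exact form treated in Caillerie and in Theorem~2.1 of \cite{CR1}, then cites the equivalence estimate
$\|A(u\e-u^0)\|_U+\eps^{-1}\|Bu\e\|_V+\|\eps^{-2}\pi_{\overline W}Bu\e-\zeta\|_{W^*}\simeq\|\zeta\|_{W^*+\eps(\overline W)^*}$,
where $\zeta\in W^*$ is the multiplier defined by $\langle f,v\rangle-(Au^0,Av)_U=\langle\zeta,Bv\rangle$. The limit $\M$ is then produced explicitly as $j_{[W^*\to\overline{\overline W}]}\zeta$ and both conclusions fall out of the cited estimate plus the interpolation-space fact $\|\zeta\|_{W^*+\eps(\overline W)^*}\to 0$. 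You instead run a self-contained energy argument (a priori bounds, weak compactness, identification of the weak limit via \eqref{limitas}, then a $\limsup\|Au\e\|_U\le\|Au^0\|_U$ argument to upgrade to norm convergence) for the displacement, and a Cauchy-sequence argument in $\overline{\overline W}$ for the scaled stress. Both routes are valid; the paper trades self-containedness for brevity by invoking \cite{CR1}, whereas your version reconstructs what that theorem delivers in this setting from first principles. Your first paragraph is correct and complete.

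In the second paragraph you hesitate over whether the functional $v\mapsto\langle f,v\rangle-(Au\e,Av)_U$ annihilates $\ker B$, and you write that ``actually it does not.'' It does. Take any $v\in\ker B$ in \eqref{prob1as}: then $Bv=0$, the $\eps^{-2}$ term vanishes, and what remains is precisely $(Au\e,Av)_U=\langle f,v\rangle$, i.e.\ $\langle f,v\rangle-(Au\e,Av)_U=0$. This is the same identity you already used to identify the weak limit $u^0$; it holds for every $\eps>0$, not merely in the limit. Once you see this, the value $(\M\e,w)_V=\langle f,v\rangle-(Au\e,Av)_U$ is independent of the choice of lift $v$ with $Bv=w$, and choosing the energy-minimizing lift (which by \eqref{equiva1as} satisfies $\|v\|_H\simeq\|Av\|_U+\|w\|_V\lesssim\|w\|_W$) gives the uniform $W^*$-bound directly; no closed-range or open-mapping argument beyond the definition of $\|\cdot\|_W$ is needed. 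The Cauchy step is then exactly as you wrote: subtracting the identities for $\eps$ and $\delta$ kills the $\langle f,v\rangle$ terms, and $\|\M\e-\M^\delta\|_{\overline{\overline W}}\lesssim\|A(u\e-u^\delta)\|_U\to0$ by part one. So the ``main obstacle'' you anticipate at the end is not an obstacle at all — it is resolved by the same line that proves the consistency of the limit equation.
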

\begin{proof}
In view of equation \eqref{limitas}, we have $\langle f,v\rangle-(Au^0, Av)_U=0$ $\forall v\in\ker B$. Thus there is a unique $\zeta\in W^*$ such that
$\langle f,v\rangle-(Au^0, Av)_U=\langle\zeta, Bv\rangle$.
Subtracting $(Au^0, Av)$ from both sides of the equation \eqref{prob1as} and using the fact that $Bu^0=0$, we have
\begin{equation*}
\eps^2(A(u\e-u^0),Av)_U+(B(u\e-u^0),Bv)_{\overline W}=\eps^2\langle\zeta, Bv\rangle
\quad \forall\ v\in H.
\end{equation*}
This problem is in the form that was analyzed in  \cite{Caillerie} and \cite{CR1}. By
Theorem 2.1 of \cite{CR1},  we have
\begin{equation*}
\|A(u\e-u^0)\|_U+\eps^{-1}\|Bu\e\|_{V}
+\|\eps^{-2}\pi_{\overline W}Bu\e-\zeta\|_{W^*}
\simeq\|\zeta\|_{W^*+\eps(\overline W)^*},\\
\end{equation*}
The conclusion of the theorem them follows from the fact that
$\lim_{\eps\to 0}\|\zeta\|_{W^*+\eps(\overline W)^*}=0$
\cite{BL}, and
$\|\eps^{-2}\pi_{\overline W}Bu\e-\zeta\|_{W^*}
=\|\eps^{-2}Bu\e-\M\|_{\overline{\overline W}}$. Here $\M=j_{[W^*\to\overline{\overline W}]}\zeta$.
\end{proof}
In terms of the Koiter model \eqref{K-model}, the operator $B$ is the membrane strain operator,
and $\ker B$ is the space of isometric displacements of pure bending. The situation of $\ker B\ne 0$ is that the shell allows pure bendings,
and the condition $f|_{\ker B}\ne 0$
means that the load on the shell indeed activates pure bending.  The convergence described in this theorem
means when $\eps\to 0$, $\!u\e$ converges to a limit in $H^1$, $w\e$ converges to a limit in $H^2$ and the scaled
membrane stress $\M^{\eps\alpha\beta}$ converges to a limit in a space that generally can not be described  in the usual sense of space of functions
or distributions. This is a minimum information one needs to have in order to conceive  a possibility to make the term 
$\sum_{\tau\in\T_h}
\sum_{\alpha,\beta=1}^2h^4_{\tau}\|\M^{\eps\alpha\beta}\|^2_{2,\tau}$ in the error estimate \eqref{estimate1} 
small, uniformly with respect to $\eps$, by a limited number of triangles.

The mixed form of the Koiter shell model \eqref{K-P-model} can be fitted in the abstract problem
\eqref{isomorphism-abs} below. 
Let $H, V$ be Hilbert spaces. Let $a(\cdot,\cdot)$ and $c(\cdot,\cdot)$ be bilinear forms on $H$ and $V$, and $b(\cdot,\cdot)$
be a bilinear form on $H\x V$. We assume that there is  a constant $C$ such that
\begin{equation}\label{isomorphism-condition}
\begin{gathered}
|a(u, v)|\le C\|u\|_H\|v\|_H, \quad C^{-1}\|u\|_H^2\le a(u, u)\ \forall\ u, v\in H,\\
|c(p, q)|\le C\|p\|_V\|q\|_V, \quad C^{-1}\|p\|_V^2\le c(p, p)\ \forall\ p, q\in V,\\
|b(v, q)|\le C\|v\|_H\|q\|_V\ \forall\ v\in H, q\in V.
\end{gathered}
\end{equation}
For  $f\in H^*$ and $g\in V^*$,
we seek $u\in H$ and $p\in V$ such that
\begin{equation}\label{isomorphism-abs}
\begin{gathered}
a(u, v)+b(v, p)=\langle f, v\rangle\ \ \forall\ v\in H,\\
b(u, q)-\eps^2c(p, q)=\langle g, q\rangle\ \ \forall\ q\in V.
\end{gathered}
\end{equation}
This problem  has a unique solution in the space $H\x V$ \cite{ABrezzi2},
for which we need an accurate estimate. Although this problem has been extensively studied in the literature \cite{Brezzi-book},
we were not able to find what we exactly need. So we include the theorem below.

For $v\in H$, there is a $l(v)\in V^*$ such that $\langle l(v), q\rangle =b(v, q)$ $\forall\ q\in V$.
We let $B(v)=i_Vl(v)\in V$, with $i_V:V^*\to V$ being the Riesz representation operator. Let $W$ be the range of $B$.
We define a weaker (semi) norm on $V$ by
\begin{equation}\label{isomorphism-weak}
|q|_{\overline V}=\sup_{v\in H}\frac{b(v,q)}{\|v\|_H}\ \ \forall\ q\in V.
\end{equation}
If $W$ is dense in $V$,
this is a  weaker norm. Otherwise, it is a semi-norm. 
Whether $W$ is dense in $V$ or not, we have the following
equivalence result.
\begin{thm}\label{isomorphism-thm}
There exist constants $C_\alpha$ that only depend on the constant in \eqref{isomorphism-condition} such that 
\begin{multline}\label{isomorphism-equiv}
\|u\|_H+|p|_{\overline V}+\eps\|p\|_V\le C_1
\sup_{v\in H, q\in V}\frac{a(u, v)+b(v, p)-
b(u, q)+\eps^2c(p, q)}{\|v\|_H+|q|_{\overline V}+\eps\|q\|_V}\\
\le C_2(\|u\|_H+|p|_{\overline V}+\eps\|p\|_V)
\ \ \forall\ u\in H, \ p\in V.
\end{multline}
\end{thm}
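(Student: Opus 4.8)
The plan is to prove the two inequalities in \eqref{isomorphism-equiv} separately, treating the upper bound first (which is essentially just continuity) and then the lower bound (the inf--sup--type stability, which is the real content). For the upper bound, I would fix $u\in H$, $p\in V$ and estimate the numerator for arbitrary test functions $v\in H$, $q\in V$: by \eqref{isomorphism-condition} we have $|a(u,v)|\lesssim\|u\|_H\|v\|_H$, $|\eps^2 c(p,q)|\lesssim \eps\|p\|_V\cdot\eps\|q\|_V$, and for the mixed terms I would use $|b(v,p)|\le |p|_{\overline V}\|v\|_H$ directly from the definition \eqref{isomorphism-weak}, and likewise $|b(u,q)|\le|q|_{\overline V}\|u\|_H$ after swapping roles (note \eqref{isomorphism-weak} bounds $b(v,q)$ by $|q|_{\overline V}\|v\|_H$ for every $v$). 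Summing, the numerator is $\lesssim (\|u\|_H+|p|_{\overline V}+\eps\|p\|_V)(\|v\|_H+|q|_{\overline V}+\eps\|q\|_V)$, giving the constant $C_2$ after dividing.

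For the lower bound I would construct, given $u$ and $p$, explicit test functions that realize each of the three pieces $\|u\|_H$, $\eps\|p\|_V$, and $|p|_{\overline V}$ up to constants. Taking $v=u$, $q=-p$ in the bilinear expression, the $b$-terms cancel and coercivity of $a$ and $c$ in \eqref{isomorphism-condition} yields $a(u,u)+\eps^2c(p,p)\ge C^{-1}(\|u\|_H^2+\eps^2\|p\|_V^2)$, which controls $\|u\|_H$ and $\eps\|p\|_V$ on dividing by $\|v\|_H+|q|_{\overline V}+\eps\|q\|_V \lesssim \|u\|_H+\eps\|p\|_V$ (using $|p|_{\overline V}\le C\|p\|_V$, also from \eqref{isomorphism-condition}). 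For the remaining piece $|p|_{\overline V}$: by its definition \eqref{isomorphism-weak} there is (for any $\delta>0$) a $v_0\in H$ with $\|v_0\|_H=1$ and $b(v_0,p)\ge |p|_{\overline V}-\delta$. Testing the expression with $v=v_0$, $q=0$ gives $a(u,v_0)+b(v_0,p)$, so $b(v_0,p)\le (\text{numerator}) + |a(u,v_0)| \le (\text{numerator}) + C\|u\|_H$; since the denominator for this test pair is $\|v_0\|_H=1$, we get $|p|_{\overline V}\lesssim \text{(the sup)} + \|u\|_H$, and $\|u\|_H$ was already bounded by the sup in the previous step. Combining the three estimates and absorbing gives $C_1$.

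The main obstacle I anticipate is bookkeeping around the seminorm $|\cdot|_{\overline V}$: because $W$ need not be dense, $|\cdot|_{\overline V}$ may genuinely be only a seminorm, so one must be careful that the left-hand side of \eqref{isomorphism-equiv} is still a legitimate (semi)norm on $H\times V$ in which the statement makes sense, and that the test function $v_0$ extracting $|p|_{\overline V}$ can be chosen with controlled $H$-norm — this is exactly the supremum in \eqref{isomorphism-weak}, so it is available, but the argument must be run with an $\epsilon$-of-room ($\delta$) and then the $\delta$ sent to $0$, or else phrased via the fact that the supremum is attained up to arbitrarily small error. A secondary, purely cosmetic, point is that the denominator $\|v\|_H+|q|_{\overline V}+\eps\|q\|_V$ must be shown nonzero for the chosen test pairs (true unless $v,q$ are both trivial), and that when it vanishes the corresponding contribution to the left side is also zero, so the sup is well-defined. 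None of these steps requires more than \eqref{isomorphism-condition}, \eqref{isomorphism-weak}, and the Cauchy--Schwarz/coercivity estimates, so the constants indeed depend only on $C$ from \eqref{isomorphism-condition}.
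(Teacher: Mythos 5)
Your upper-bound argument is correct and standard. The lower-bound argument, however, contains a genuine gap, together with a minor sign slip. The sign slip: with $q=-p$ the $b$-terms in the numerator do not cancel but double to $2b(u,p)$, and $\eps^2 c(p,q)$ becomes $-\eps^2 c(p,p)$, so coercivity is of no use; you want $q=+p$, which produces exactly $a(u,u)+\eps^2 c(p,p)$. The substantive error is the claimed control of the denominator for this test pair: the denominator is $\|u\|_H+|p|_{\overline V}+\eps\|p\|_V$, and the bound $|p|_{\overline V}\le C\|p\|_V$ from \eqref{isomorphism-condition} gives $|p|_{\overline V}\lesssim\|p\|_V$, \emph{not} $\lesssim\eps\|p\|_V$. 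For small $\eps$ the term $|p|_{\overline V}$ may dominate, so the reduction to $\|u\|_H+\eps\|p\|_V$ is false uniformly in $\eps$. What testing at $(u,p)$ alone yields, writing $S$ for the supremum, is only $C^{-1}(\|u\|_H^2+\eps^2\|p\|_V^2)\le S\,(\|u\|_H+|p|_{\overline V}+\eps\|p\|_V)$, from which one cannot extract $\|u\|_H+\eps\|p\|_V\lesssim S$ without already controlling $|p|_{\overline V}$; your argument is circular as ordered.

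The plan is salvageable by reordering the two tests. The pair $(v_0,0)$ already gives $|p|_{\overline V}\le S+C\|u\|_H$ with \emph{no} reference to the first step, so establish that first, substitute it into $C^{-1}(\|u\|_H^2+\eps^2\|p\|_V^2)\le S(\|u\|_H+|p|_{\overline V}+\eps\|p\|_V)\le S\bigl((1+C)\|u\|_H+\eps\|p\|_V+S\bigr)$, and then absorb $\|u\|_H$ and $\eps\|p\|_V$ into the left side by Young's inequality to conclude $\|u\|_H+\eps\|p\|_V\lesssim S$, hence $|p|_{\overline V}\lesssim S$ as well. Once repaired, this is a genuinely different and considerably more elementary route than the paper's proof, which splits the data $(f,g)$, invokes the a priori bound of \cite{ABrezzi2} for the $f$-piece, appeals to real interpolation ($W+\eps^{-1}V=(W^*\cap\eps V^*)^*$, via \cite{BL}) for the $g$-piece, and handles the case where $W$ is not dense in $V$ by a separate orthogonal decomposition; the direct inf-sup argument with the two test pairs avoids that machinery entirely and treats the dense and non-dense cases in a single stroke.
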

\begin{proof}
The second inequality is obvious.
To prove the first inequality, we first assume that $W$ is dense in $V$. 
Then  $V^*$ is dense in $W^*$, and  we have $(\eps V^*\cap W^*)^*=\eps^{-1}V+W$ \cite{BL}.
We also see from the definition \eqref{isomorphism-weak} that $|q|_{\overline V}=\|\pi_Vq\|_{W^*}$. Here $\pi_V:V\to V^*$ is the inverse of Riesz representation.
For any $(u, p)\in H\x V$, we let $(f, g)\in H^*\x V^*$ be the corresponding right hand side functional in
the equation \eqref{isomorphism-abs}. We write $u=u_1+u_2$ and $p=p_1+p_2$,
with $(u_1, p_1)$ solving
\begin{equation*} 
\begin{gathered}
a(u_1, v)+b(v, p_1)=\langle f, v\rangle\ \ \forall\ v\in H,\\
b(u_1, q)-\eps^2c(p_1, q)=0\ \ \forall\ q\in V,
\end{gathered}
\end{equation*}
while $(u_2, p_2)$ solves
\begin{equation*} 
\begin{gathered}
a(u_2, v)+b(v, p_2)=0\ \ \forall\ v\in H,\\
b(u_2, q)-\eps^2c(p_2, q)=\langle g, q\rangle\ \ \forall\ q\in V.
\end{gathered}
\end{equation*}
It can be shown that  $\|u_1\|_H+|p_1|_{\overline V}+\eps\|p_1\|_V\le C\|f\|_{H^*}$ \cite{ABrezzi2}. This is to say
\begin{equation}\label{iso-est1}
\|u_1\|_H+|p_1|_{\overline V}+\eps\|p_1\|_V\le C
\sup_{v\in H}\frac{|a(u, v)+b(v, p)|}{\|v\|_{H}}
\end{equation}
From the first equation of the above second system, we see that $|p_2|_{\overline V}\le \|u_2\|_H$.
Let $u_0\in H$ be an element such that $\|u_0\|_H=\|Bu_0\|_W$. We write
\begin{equation*}
\langle g, q\rangle=(i_V g, q)=(Bu_0, q)+(i_V g_1, q)=b(u_0, q)+\langle g_1, q\rangle.
\end{equation*}
We thus have
\begin{equation*} 
\begin{gathered}
a(u_2-u_0, v)+b(v, p_2)=-a(u_0, v)\ \ \forall\ v\in U,\\
b(u_2-u_0, q)-\eps^2c(p_2, q)=\langle g_1, q\rangle\ \ \forall\ q\in V.
\end{gathered}
\end{equation*}
Taking $v=u_2-u_0$ and $q=p_2$ in this equation, and sum, we have
\begin{equation*}
\|u_2-u_0\|_H^2+\eps^2\|p_2\|^2_V=-a(u_0, u_2-u_0)-(\eps^{-1}i_V g_1, \eps p_2).
\end{equation*}
Using Cauchy--Schwarz inequality, we get
\begin{equation*}
\|u_2-u_0\|_H+\eps\|p_2\|_V\le C \|u_0\|_H+\eps^{-1}\|i_V g_1\|_V.
\end{equation*}
Thus
\begin{equation*}
\|u_2\|_H+|p_2|_{\overline V}+\eps\|p_2\|_V\le C(\|u_0\|_H+\eps^{-1}\|i_V g_1\|_V)\le C
(\|Bu_0\|_W+\eps^{-1}\|i_V g_1\|_V).
\end{equation*}
Since this is valid for any decomposition of $g$, we get
\begin{multline}\label{iso-est2}
\|u_2\|_H+|p_2|_{\overline V}+\eps\|p_2\|_V\le C\|i_Vg\|_{W+\eps^{-1}V}=\|i_V g\|_{(W^*\cap\eps V^*)^*}\\=
\sup_{q\in V}\frac{\langle i _Vg, \pi_Vq\rangle}{\|\pi_Vq\|_{W^*}+\eps\|\pi_Vq\|_{V^*}}=
\sup_{q\in V}\frac{\langle g, q\rangle}{|q|_{\overline V}+\eps\|q\|_{V}}.
\end{multline}
It follows from \eqref{iso-est1} and \eqref{iso-est2} that when $W$ is dense in $V$ for any $(u, p)\in H\x V$ we have
\begin{equation}\label{iso-est3}
\|u\|_H+|p|_{\overline V}+\eps\|p\|_V\le C
\sup_{(v, q)\in H\x V}\frac{a(u, v)+b(v, p)
-b(u, q)+\eps^2c(p, q)}
{\|v\|_H+|q|_{\overline V}+\eps\|q\|_{V}}.
\end{equation}

If $W$ is not dense in $V$, we let $\overline W$ be the closure of $W$ in $V$, and
decompose $V$ orthogonally as $V=\overline W\oplus\overline W^\perp$. Then any $q\in V$ can be written as
$q=q_W+q_\perp$ such that $q_W\in\overline W$ and $q_\perp\in\overline W^\perp$. We have $b(v, q)=b(v, q_W)$ and $|q|_{\overline V}=|q_W|_{\overline V}$
and $\|q\|^2_V=\|q_W\|^2_V+\|q_\perp\|^2_V$.
For any $u\in H$ and $p\in V$, it follows from \eqref{iso-est3} that
\begin{equation*}
\|u\|_H+|p_W|_{\overline V}+\eps\|p_W\|_V
\le C
\sup_{v\in H, q_W\in \overline W}\frac{a(u, v)+b(v, p_W)-
b(u, q_W)+\eps^2c(p_W, q_W)}{\|v\|_H+|q_W|_{\overline V}+\eps\|q_W\|_V}.
\end{equation*}
Thus
\begin{multline*}
\|u\|_H+|p|_{\overline V}+\eps\|p\|_V\le C
(\|u\|_H+|p_W|_{\overline V}+\eps\|p_W\|_V+\eps\|p_\perp\|_V)\\
\le C
\sup_{v\in H, q_W\in \overline W}\frac{a(u, v)+b(v, p_W)-
b(u, q_W)+\eps^2c(p_W, q_W)}{\|v\|_H+|q_W|_{\overline V}+\eps\|q_W\|_V}+C\eps\|p_\perp\|_V\\
\le C
\sup_{v\in H, q_W\in\overline W, q_\perp\in \overline W^\perp}\frac{a(u, v)+b(v, p_W)-
b(u, q_W)+\eps^2c(p_W, q_W)+\eps^2c(p_\perp, q_\perp)}{\|v\|_H+|q_W|_{\overline V}+\eps\|q_W\|_V+\eps\|q_\perp\|_V}\\
\le C\sup_{v\in H, q\in V}\frac{a(u, v)+b(v, p)-
b(u, q)+\eps^2c(p, q)}{\|v\|_H+|q|_{\overline V}+\eps\|q\|_V}.
\end{multline*}
\end{proof}

\section{The finite element model}
\label{principle}
As mentioned in the introduction, we assume that the coordinate domain $\Omega$ is a polygon.
On $\Omega$, we introduce a triangulation $\T_h$
that is shape regular but not necessarily quasi-uniform.
We use $\E^0_h$ to denote both
the union of interior edges and the set of all interior edges.
The set of edges on the boundary $\partial\Omega$
is denoted by $\E^{\partial}_h$ that is divided as $\E^D_h\cup\E^S_h\cup \E^F_h$, corresponding to clamped, simply supported, and free
portions of the shell boundary. We let
$\E_h=\E^0_h\cup\E^{\partial}_h$. For a $e\in\E^h$, we use $h_e$ to denote its length. We use $\tilde\E_h=\!phi(\E_h)$ to denote the curvilinear
edges on the shell mid-surface $\t\Omega$ in a similar way. 
On $\Omega_h$, for any piecewise vectors $u_\alpha$ and $v_\alpha$, scalars $w$ and $z$, symmetric tensors
$\M^{\alpha\beta}$ and $\N^{\alpha\beta}$,  we define the following bilinear and linear forms.
\begin{multline}\label{form_ub_a}
\ub a(\!u, w; \!v, z)
=
\frac13\left\{\int_{\t\Omega_h}\left[a^{\alpha\beta\lambda\gamma}
\rho_{\lambda\gamma}(\!u, w)\rho_{\alpha\beta}(\!v, z)
+a^{\alpha\beta\lambda\gamma}
\gamma_{\lambda\gamma}(\!u, w)\gamma_{\alpha\beta}(\!v, z)\right] \right.\hfill
\end{multline}
\begin{multline*}
-\int_{\t\E^0_h}\left[2a^{\tau\beta\sigma\lambda}\lbrac\rho_{\sigma\tau}(\!v, z)\rbrac b^{\alpha}_{\beta}
+a^{\alpha\lambda\gamma\delta}\lbrac\gamma_{\gamma\delta}(\!v, z)\rbrac\right]
\lbra u_{\alpha}\rbra_{n_{\lambda}}\\
-\int_{\t\E^0_h}\left[2a^{\tau\beta\sigma\lambda}\lbrac\rho_{\sigma\tau}(\!u, w)\rbrac b^{\alpha}_{\beta}
+a^{\alpha\lambda\gamma\delta}\lbrac\gamma_{\gamma\delta}(\!u, w)\rbrac
\right] 
\lbra v_{\alpha}\rbra_{n_{\lambda}}
\end{multline*}
\begin{multline*}
-\int_{\t\E^0_h}
a^{\alpha\beta\lambda\gamma}\lbrac\rho_{\lambda\gamma}(\!v, z)\rbrac
\lbra\partial_\alpha w\rbra_{n_{\beta}}
-\int_{\t\E^0_h}
a^{\alpha\beta\lambda\gamma}\lbrac\rho_{\lambda\gamma}(\!u, w)\rbrac
\lbra\partial_\alpha z\rbra_{n_{\beta}}\\
+
\int_{\t\E^0_h}
a^{\alpha\beta\lambda\gamma}\lbrac\rho_{\lambda\gamma|\beta}(\!v, z)
\rbrac\lbra w\rbra_{n_{\alpha}}
+
\int_{\t\E^0_h}
a^{\alpha\beta\lambda\gamma}\lbrac\rho_{\lambda\gamma|\beta}(\!u, w)
\rbrac\lbra z\rbra_{n_{\alpha}}
\end{multline*}
\begin{multline*}
-\int_{\t\E^{S\cup D}}\left[2a^{\gamma\beta\sigma\tau}\rho_{\sigma\tau}(\!u, w)b^{\alpha}_{\beta}
{n_{\gamma}}
+a^{\alpha\beta\gamma\delta}\gamma_{\gamma\delta}(\!u, w){n_{\beta}}
\right]v_{\alpha}
\\
-\int_{\t\E^{S\cup D}}\left[2a^{\gamma\beta\sigma\tau}\rho_{\sigma\tau}(\!v, z)b^{\alpha}_{\beta}
{n_{\gamma}}
+a^{\alpha\beta\gamma\delta}\gamma_{\gamma\delta}(\!v, z){n_{\beta}}
\right]u_{\alpha}
\end{multline*}

\begin{multline*}
+
\int_{\t\E^{S\cup D}}
[a^{\alpha\beta\lambda\gamma}\rho_{\lambda\gamma|\beta}(\!u, w)
{n_{\alpha}}+ D_{\!s}(a^{\alpha\beta\lambda\gamma}\rho_{\lambda\gamma}(\!u, w)
n_{\beta}s_\alpha)]z
\\
+
\int_{\t\E^{S\cup D}}
[a^{\alpha\beta\lambda\gamma}\rho_{\lambda\gamma|\beta}(\!v, z)
{n_{\alpha}}+ D_{\!s}(a^{\alpha\beta\lambda\gamma}\rho_{\lambda\gamma}(\!v, z)
n_{\beta}s_\alpha)]w
\\
\left.
-\int_{\t\E^D}
a^{\alpha\beta\lambda\gamma}\rho_{\lambda\gamma}(\!u, w)
n_{\beta}n_\alpha D_{\!n}z
-\int_{\t\E^D}
a^{\alpha\beta\lambda\gamma}\rho_{\lambda\gamma}(\!v, u)
n_{\beta}n_\alpha D_{\!n}w\right\}
\end{multline*}
Here $\!n=n^\alpha\!a_\alpha=n_\alpha\!a^\alpha$ is the unit outward normal to $\partial\t\Omega$ and tangent to $\t\Omega$,
and  $\!s=s^\alpha\!a_\alpha=s_\alpha\!a^\alpha$ is the unit counterclockwise tangent vector to $\partial\t\Omega$.
The function $z$ is treated as a function defined on $\t\Omega$ and $D_{\!n}z=n^\alpha\partial_\alpha z$ is the derivative
with respect to the arc length in the direction of $\!n$, and $D_{\!s}z=s^\alpha\partial_\alpha z$ is the derivative
in the direction of $\!s$. An edge $\t e\in\t\E^0_h$ is shared by elements $\t\tau_1$ and $\t\tau_2$. Piecewise
functions may have different values on the two elements, and thus discontinuous on $\t e$. The notation
$\lbrac\rho_{\sigma\tau}(\!v, z)\rbrac$ represents the average of values of  $\rho_{\sigma\tau}(\!v, z)$
from the sides of $\t\tau_1$ and $\t\tau_2$.
Let $\!n_\delta=n^\alpha_\delta\!a_\alpha=n_{\delta\alpha}\!a^\alpha$ be the unit outward normal to $\t e$ viewed as boundary of $\t\tau_\delta$.
We have $\!n_1+\!n_2=0$, $n^\alpha_1+n^\alpha_2=0$, and  $n_{1\alpha}+n_{2\alpha}=0$,
and $\lbra u_{\alpha}\rbra_{n_{\lambda}}=(u_{\alpha})|_{\t\tau_1} n_{1\lambda}+(u_{\alpha})|_{\t\tau_2} n_{2\lambda}$ is the jump of $u_\alpha$ over the edge
$\t e$ with respect to $\!n$, etc. We add  some additional penalty terms on the 
inter-element discontinuity and on the clamped and simply supported portions of the
boundary, and define a symmetric
bilinear form $a(\!u, w;\!v, z)$ by
\begin{multline}\label{form_a}
a(\!u, w;\!v, z)=\ub a(\!u, w;\!v, z)\\
+\C\sum_{e\in\E^0_h}
\left(h^{-1}_e\int_{e}\sum_{\alpha=1,2}\lbra u_{\alpha}\rbra \lbra v_{\alpha}\rbra+h^{-3}_e\int_{e}\lbra w\rbra \lbra z\rbra
+h^{-1}_e\int_{e}\sum_{\alpha=1,2}\lbra \partial_\alpha w\rbra \lbra \partial_\alpha z\rbra \right)
\\
+\C\sum_{e\in\E^{S}_h\cup\E^D_h}
\left(h^{-1}_e\int_{e}\sum_{\alpha=1,2}u_{\alpha}v_{\alpha}+h^{-3}_e\int_{e}wz\right)
+\C\sum_{e\in\E^D_h}
\left(h^{-1}_e\int_{e}D_{\!n}w D_{\!n}z\right).
\end{multline}
The jump $\lbra u_\alpha\rbra$ is the absolute value of the difference 
in the values of $u_\alpha$ from the two sides of $e$. We also define the bilinear forms 
\begin{multline}\label{form_b}
b(\M; \!v,z)=
\int_{\t\Omega_h}\M^{\alpha\beta}\gamma_{\alpha\beta}(\!v, z)
-\int_{\t\E^0_h}\lbrac\M^{\alpha\beta}\rbrac\lbra v_{\alpha}\rbra_{n_{\beta}}
 -\int_{\t\E^{S}_h\cup\t\E^D_h}
\M^{\alpha\beta}{n_{\beta}}v_{\alpha},
\hfill
\end{multline}
\begin{multline}\label{form_c}
c(\M, \N)=\int_{\t\Omega_h}a^{\alpha\beta\gamma\delta}\M_{\gamma\delta}\N_{\alpha\beta}.\hfill
\end{multline}
We define a linear form
\begin{multline}\label{form_f}
\langle \!f; \!v, z\rangle=\int_{\t\Omega_h}
(p^{\alpha}v_{\alpha}+
p^3z)
+\int_{\t\E^S_h}mD_{\!n}z
+\int_{\t\E^F_h}\left(q^\alpha v_\alpha+q^3z+mD_{\!n}z\right).\hfill
\end{multline}
All these forms are well defined for piecewise functions that could be independently defined on each element of the triangulation $\T_h$.

The finite element model is defined on a space of piecewise polynomials. We use continuous piecewise linear polynomials for the components
$\M^{\alpha\beta}$
of the scaled membrane stress, use discontinuous piecewise cubic  polynomials for the transverse deflection $w$, and use discontinuous
piecewise quadratic polynomials for components $u_\alpha$. The latter needs to be enriched  on elements that have one or two edges
on the free boundary $\E^F_h$. On an element $\tau$ (with edges $e_i$), we let $P^k(\tau)$ be the space
of polynomials of degree $k$.  If $\tau$ has one edge ($e_1$) on the free boundary,
we define two cubic polynomials $p^3_\alpha$ by
\begin{equation*}
p^3_1=\lambda_1p^2_1+1,\quad p^3_2=\lambda_1p^2_2+\lambda_2.
\end{equation*}
Here $\lambda_i$ are the barycentric coordinates with respect to the vertex opposite to the edge $e_i$, and $p^2_\alpha\in P^2(\tau)$ are
defined by
\begin{equation}\label{P3*}
\int_\tau(\lambda_1p^2_1+1)q\sqrt a=0\ \forall\ q\in P^2(\tau), \quad
\int_\tau(\lambda_1p^2_2+\lambda_2)q\sqrt a=0\ \forall\ q\in P^2(\tau).
\end{equation}
Note that functions in $\text{span}(p^3_\alpha)$ are orthogonal to $P^2(\tau)$ with respect to the inner product of 
$L^2(\tau)$ weighted by $\sqrt a$,
and they are linear on $e_1$, on which they can be determined by their zero and first moments weighted by $\sqrt a$.
We then define the $P^3_*(\tau)=P^2(\tau)\oplus\text{span}(p^3_\alpha)$.
If $\tau$ has two edges ($e_\alpha$) on the free boundary, we take the full $P^3(\tau)$.
A function in $P^3(\tau)$ is uniquely determined by its projection into  $P^2(\tau)$ with respect to $L^2(\tau)$ weighted by $\sqrt a$
and its zero and first moments on $e_1$ and $e_2$ weighted by $\sqrt a$. These will be explicitly given by the formulas \eqref{wI} -- \eqref{uI-edge2} below.
Let $P^u(\tau)= P^2(\tau)$, $P^3_*(\tau)$, or $P^3(\tau)$, depending on
whether  $\tau$ has no edge, one edge, or two edges on the free boundary $\E^F_h$.
The finite element space is defined by
\begin{equation}\label{FE-space}
\begin{gathered}
\H_h=\{(\!v, z); \text{ on each }\tau\in\T_h, v_\alpha\in P^u(\tau), z\in P^3(\tau)\},\\
\V_h=\{\N; \N^{\alpha\beta}\in H^1, \text{ on each }\tau\in\T_h,  \N^{\alpha\beta}\in P^1(\tau)\}.
\end{gathered}
\end{equation}
The finite element model seeks $(\!u, w)\in \H_h$ and $\M\in \V_h$ such that
\begin{equation}\label{K-fem}
\begin{gathered}
a(\!u, w; \!v, z)+b(\M; \!v, z)=\langle\!f;\!v, z\rangle\ \ \forall\ (\!v, z)\in \H_h, \\
b(\N; \!u, w)-\eps^2c(\M,\N)=0\ \ \forall\ \N\in \V_h.
\end{gathered}
\end{equation}
This equation is in the form of \eqref{isomorphism-abs}. We shall define the norms in $\H_h$ and $\V_h$ later.
We shall prove that the finite element model \eqref{K-fem} is well posed
if the penalty constant $\C$ in \eqref{form_a} is sufficiently large, by verifying the conditions \eqref{isomorphism-condition}.
This penalty constant $\C$ could be dependent on the shell geometry, the polynomial degrees, and the shape regularity $\K$ of
the triangulation $\T_h$. It is, otherwise, independent of the triangulation $\T_h$ and the shell thickness.

The solution of the Koiter model \eqref{K-P-model} satisfies the equation \eqref{K-fem} when the test functions $\!v$, $z$, $\N$
are arbitrary piecewise smooth functions, not necessarily polynomials. This says that the finite element model is consistent
with the shell model. The consistency is verified in the appendix.

Note that there is no boundary condition enforced on functions in the spaces $\H_h$ and $\V_h$.
The displacement boundary condition is enforced by boundary penalty in a consistent manner, which is Nitsche's method
\cite{A-DG}.
For tangential displacement on an element with one edge on $\E^F_h$, one may simply replace $P^3_*(\tau)$  by the richer $P^3(\tau)$.
This would slightly increase the complexity, but not affect the stability or accuracy of the finite element method.

\section{Korn's inequality on shells for piecewise functions }
\label{KornOnShell}
The wellposedness of the Koiter model \eqref{K-model}
is based on the following Korn type inequality on shells \cite{BCM}. There is a constant $C$ such that
\begin{multline}\label{Korn}
\sum_{\alpha=1,2}\|u_\alpha\|^2_{1,\Omega}+\|w\|^2_{2,\Omega}\le C
\left[\sum_{\alpha,\beta=1,2}\|\rho_{\alpha\beta}(\!u, w)\|^2_{0,\Omega}+\sum_{\alpha,\beta=1,2}\|\gamma_{\alpha\beta}(\!u, w)\|^2_{0,\Omega}+f^2(\!u, w)\right]
\\
\ \ \forall\
\!u\in \!H^1,\ w\in H^2.
\end{multline}
Here $f(\!u, w)$ is a semi-norm on $\!H^1\x H^2$ that satisfies the condition that
if $\!u, w$ defines a rigid body motion and $f(\!u, w)=0$ then $\!u=0$ and $w=0$.
(The displacement $u_\alpha\!a^\alpha+w\!a^3$ is a rigid body motion of the shell mid-surface
if and only if $\rho_{\alpha\beta}(\!u, w)=0$ and  $\gamma_{\alpha\beta}(\!u, w)=0$ \cite{BCM}.)
We need to generalize this inequality to piecewise functions on $\Omega_h$.
Let $H^1_h$ be the space of piecewise $H^1$ functions in which a function is independently defined on each element $\tau$, and
$u|_\tau\in H^1(\tau)$ for $\tau\in\T_h$. The space $H^2_h$ is
the space of piecewise $H^2$ functions defined similarly.
\begin{multline}\label{K-h-norm-u}
\|u\|_{H^1_h}^2:=\sum_{\tau\in\T_h}\|u\|^2_{1,\tau}
+\sum_{e\in \E^0_h}
h^{-1}_e\int_{e}\lbra u\rbra^2,\hfill
\end{multline}
\begin{multline}\label{K-h-norm-w}
\|w\|_{H^2_h}^2:=
\sum_{\tau\in\T_h}\|w\|^2_{2,\tau}
+\sum_{e\in \E^0_h}\left(\sum_{\alpha=1, 2}
h^{-1}_e\int_{e}\lbra \partial_\alpha w\rbra^2
+
h^{-3}_e\int_{e}\lbra w\rbra^2\right).\hfill
\end{multline}
For $u_\alpha\in H^1_h$ and $w\in H^2_h$, we define a norm
\begin{equation}\label{h-norm}
\|\!u, w\|_{\!H^1_h\x H^2_h}=\left(\sum_{\alpha=1,2}\|u_\alpha\|^2_{H^1_h}+\|w\|^2_{H^2_h}\right)^{1/2}.
\end{equation}
Let $f(\!u, w)$ be a semi-norm that is continuous with respect to this norm such that there is a $C$ only dependent of $\K$ of $\T_h$
and
\begin{equation}\label{f-continuous}
|f(\!u, w)|\le C\|\!u, w\|_{\!H^1_h\x H^2_h}\ \forall\ (\!u, w)\in \!H^1_h\x H^2_h.
\end{equation}
We also assume that $f$ satisfies the condition that
if $(\!u, w)\in \!H^1\x H^2$ defines a rigid body motion and $f(\!u, w)=0$ then $\!u=0$ and $w=0$.
We then define another norm on the space $\!H^1_h\x H^2_h$.
\begin{multline}\label{triple-norm}
\ll\!u, w\ll_h=
\left[
\sum_{\alpha,\beta=1,2}\|\rho_{\alpha\beta}(\!u, w)\|^2_{0,\Omega_h}+\sum_{\alpha,\beta=1,2}\|\gamma_{\alpha\beta}(\!u, w)\|^2_{0,\Omega_h}\right.
\\
\left.+\sum_{e\in \E^0_h}\left(
\sum_{\alpha=1,2}h^{-1}_e\int_{e}\lbra u_{\alpha}\rbra^2
+\sum_{\alpha=1, 2}
h^{-1}_e\int_{e}\lbra \partial_\alpha w\rbra^2
+
h^{-3}_e\int_{e}\lbra w\rbra^2\right)
+f^2(\!u, w)
\right]^{1/2}.
\end{multline}
We have the following generalization of the Korn's inequality to piecewise functions.
\begin{thm}\label{Korn-thm}
There exists a constant $C$ that could be dependent on the shell mid-surface,
and the shape regularity $\K$ of $\T_h$, but otherwise independent of the triangulation,
such that
\begin{equation}\label{Korn-inequality}
\|\!u, w\|_{\!H^1_h\x H^2_h}\le C\ll\!u, w\ll_h\ \forall\ w\in H^2_h,\ u_{\alpha}\in H^1_h.
\end{equation}
\end{thm}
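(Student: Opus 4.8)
The plan is to derive the discrete Korn inequality \eqref{Korn-inequality} from the continuous one \eqref{Korn} by a broken--to--conforming comparison argument, combined with a scaled trace/inverse estimate on the discontinuity terms. First I would recall the key tool: for a shape-regular triangulation there is an interpolation/reconstruction operator $E_h$ mapping the broken space $\!H^1_h\x H^2_h$ into the conforming space $\!H^1\x H^2$, such that for each $\tau$,
\begin{equation*}
\|u-E_h u\|^2_{0,\tau}+h_\tau^2|u-E_h u|^2_{1,\tau}\lesssim \sum_{e\subset\partial\tau\cap\E^0_h} h_e\int_e\lbra u\rbra^2,
\end{equation*}
with the analogous $H^2$ estimate for $w$ involving the jumps of $w$ and $\partial_\alpha w$ weighted by $h_e^{-3}$ and $h_e^{-1}$. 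Such operators are standard (averaging of nodal values of a slightly higher-degree conforming element; here piecewise quadratics/cubics suffice). Then $(E_h\!u,E_hw)\in\!H^1\x H^2$, so \eqref{Korn} applies to it.

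The second step is to propagate the estimate. On the left side, by the triangle inequality $\|\!u,w\|_{\!H^1_h\x H^2_h}\le \|\!u-E_h\!u,w-E_hw\|_{\!H^1_h\x H^2_h}+\|E_h\!u,E_hw\|_{\!H^1\x H^2}$; the first term is controlled directly by the jump penalties in $\ll\cdot\ll_h$ via the $E_h$ estimates above (noting also that the jump seminorms of $\!u-E_h\!u$ and $w-E_hw$ are themselves bounded by the jumps of $\!u,w$, since $E_h$ is single-valued). For the second term I apply \eqref{Korn} and must bound $\|\rho_{\alpha\beta}(E_h\!u,E_hw)\|_{0,\Omega}$, $\|\gamma_{\alpha\beta}(E_h\!u,E_hw)\|_{0,\Omega}$, and $f(E_h\!u,E_hw)$. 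Since $\rho_{\alpha\beta}$ and $\gamma_{\alpha\beta}$ are first/second-order differential operators with smooth (hence bounded on $\overline\Omega$) coefficients $\Gamma^\lambda_{\alpha\beta}$, $b_{\alpha\beta}$, $c_{\alpha\beta}$, and they commute with restriction to elements, I get $\|\rho_{\alpha\beta}(E_h\!u,E_hw)\|_{0,\Omega_h}\le \|\rho_{\alpha\beta}(\!u,w)\|_{0,\Omega_h}+C\|(E_h\!u-\!u,E_hw-w)\|_{\!H^1_h\x H^2_h}$, and likewise for $\gamma$; the correction is again absorbed by the jump penalties. For $f$, the continuity assumption \eqref{f-continuous} gives $|f(E_h\!u,E_hw)-f(\!u,w)|\le C\|(E_h\!u-\!u,E_hw-w)\|_{\!H^1_h\x H^2_h}$, again controlled by jumps. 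Collecting, $\|E_h\!u,E_hw\|_{\!H^1\x H^2}\lesssim \ll\!u,w\ll_h$, and the theorem follows.

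The main obstacle is making the reconstruction operator $E_h$ work cleanly for the normal-deflection component in $H^2$: one needs a $C^1$-conforming (or at least $H^2$) reconstruction from broken cubics. A genuine $H^2$-conforming piecewise polynomial on an arbitrary shape-regular triangulation requires degree $\ge 5$ (Argyris), so the reconstruction must land in a higher-degree conforming space; the local approximation estimates still hold by the usual Bramble--Hilbert/averaging arguments, but the bookkeeping — which degrees of freedom to average, and verifying that the resulting bounds are in terms of exactly the jump quantities appearing in $\ll\cdot\ll_h$ (jumps of $w$ with weight $h_e^{-3}$ and of $\partial_\alpha w$ with weight $h_e^{-1}$) — is where the care is needed. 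One must also be slightly careful that $E_h$ can be chosen to respect, or at least not to violate in an uncontrolled way, the boundary-type seminorm $f$; since $f$ only enters through its continuity bound \eqref{f-continuous} this is harmless, but it should be stated. A secondary, purely technical point is the passage between the flat domain $\Omega_h$ and the curved surface $\t\Omega_h$, which is innocuous because $\int_{\t\tau}|\cdot|\simeq\int_\tau|\cdot|$ and the metric coefficients are uniformly bounded above and below.
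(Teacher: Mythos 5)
Your argument has a genuine gap: the enrichment estimate you quote,
\begin{equation*}
\|u-E_h u\|^2_{0,\tau}+h_\tau^2|u-E_h u|^2_{1,\tau}\lesssim \sum_{e\subset\partial\tau\cap\E^0_h} h_e\int_e\lbra u\rbra^2,
\end{equation*}
with $E_hu$ a conforming piecewise polynomial obtained by nodal averaging, is valid only for piecewise polynomial input. But Theorem~\ref{Korn-thm} asserts \eqref{Korn-inequality} for every $(\!u,w)\in\!H^1_h\x H^2_h$, and the paper relies on that generality (the equivalence~\eqref{Hh-ah-equiv} derived from it is stated and used on arbitrary broken $H^1\x H^2$ data). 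For a globally smooth but non-polynomial $u$ all interior jumps vanish, so the right-hand side of your estimate is zero, yet $u-E_hu\neq 0$ because $E_hu$ lands in a polynomial space; the estimate is simply false for such $u$. The Karakashian--Pascal style averaging genuinely needs polynomial input so that there is no interior approximation error to account for. If your aim were only the inequality on the finite element spaces~\eqref{FE-space} --- which is the form actually invoked in~\eqref{a-condition2} --- your plan is plausible modulo the $C^1$-conforming reconstruction issue you already flag for the $w$-component, but the constant would then carry a hidden polynomial-degree dependence from the trace/inverse estimates inside $E_h$, and you would be proving a strictly weaker statement than the one quoted.

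The paper sidesteps all of this by taking a compactness/contradiction route instead of a constructive one. It first combines Brenner's piecewise Korn inequality~\eqref{Korn-Brenner} with the structure of $\rho_{\alpha\beta}$ and $\gamma_{\alpha\beta}$ to obtain the ``Korn up to lower-order terms'' bound~\eqref{Korn-thm-proof1}; then it assumes no uniform constant exists, uses the broken compact embedding of Lemma~\ref{uniformcompactembedding} to extract an $L^2$-convergent subsequence $(\!u^n,w^n)\to(\!u^0,w^0)$, shows the limit belongs to $\!H^1\x H^2$, shows it is a rigid body motion by testing against equilibrating stress resultants and couples via~\eqref{shell-equilibrium}, and finally kills it with the $f$-seminorm --- a contradiction. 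This is non-constructive (no explicit constant) and needs the nontrivial uniform compactness lemma, but it delivers the theorem on the full broken Sobolev spaces with a shape-regularity-only constant, which is exactly what is claimed and used. Your constructive approach would buy an explicit constant and avoid the compactness machinery, but to recover the full theorem along those lines you would need a degree-independent jump-lifting operator bounded on the whole broken Sobolev space, a substantially more delicate object than nodal averaging.
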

In view of the definitions \eqref{K-curvature} and \eqref{K-metric}, this theorem implies that $\|\!u, w\|_{\!H^1_h\x H^2_h}\simeq \ll\!u, w\ll_h$.
To prove the inequality, we need a discrete Korn's inequality for piecewise functions in $H^1_h$, see (1.21) of \cite{Brenner-Korn}.
It says that
that there is a constant $C$ that might be dependent on the domain $\Omega$ and the shape regularity $\K$ of the triangulation $\T_h$, but
otherwise independent of $\T_h$ such that
\begin{equation}\label{Korn-Brenner}
\sum_{\alpha=1,2}\|u_\alpha\|^2_{H^1_h}\le C\left[\sum_{\alpha=1,2}\|u_\alpha\|^2_{0,\Omega_h}
+\sum_{\alpha, \beta=1,2}\|e_{\alpha\beta}(\!u)\|^2_{0,\Omega_h}+\sum_{e\in\E^0_h, \alpha=1,2}
h^{-1}_e\int_{e}\lbra u_{\alpha}\rbra^2\right].
\end{equation}
Here $e_{\alpha\beta}(\!u)=(\partial_\beta u_\alpha+\partial_\alpha u_\beta)/2$ is the symmetric part of the gradient of $\!u$.
We also need a trace theorem and a compact embedding result for functions in $H^1_h$.
\begin{lem}
Let $\tau$ be a triangle, and $e$ one of its edges. Then there is a
constant $C$ depending on the shape regularity of $\tau$ such that
\begin{equation}\label{trace}
\int_eu^2\le C\left[h_e^{-1}\int_{\tau}u^2+\sum_{\alpha=1,2}h_e\int_{\tau}|\partial_\alpha u|^2\right]
\ \ \forall\ u\in H^1(\tau).
\end{equation}
\end{lem}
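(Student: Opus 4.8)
The plan is to prove the trace inequality \eqref{trace} by a standard scaling argument, reducing the estimate on an arbitrary shape-regular triangle to a fixed reference configuration. First I would fix a reference triangle $\hat\tau$ (say the unit right triangle) with a chosen reference edge $\hat e$, and on $\hat\tau$ invoke the classical trace theorem for $H^1(\hat\tau)$: there is an absolute constant $\hat C$ such that $\int_{\hat e}\hat u^2\le \hat C(\int_{\hat\tau}\hat u^2+\sum_\alpha\int_{\hat\tau}|\hat\partial_\alpha\hat u|^2)$ for all $\hat u\in H^1(\hat\tau)$. This is the only nontrivial analytic input and it is genuinely classical (e.g. it follows from density of $C^\infty(\overline{\hat\tau})$ together with the fundamental theorem of calculus along the direction transverse to $\hat e$).

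Next I would set up an affine map $F:\hat\tau\to\tau$ taking $\hat e$ onto the prescribed edge $e$, written $F(\hat x)=B\hat x+b$ with $B$ an invertible $2\times2$ matrix. Shape regularity of $\tau$ gives the two-sided bounds $\|B\|\simeq h_\tau\simeq h_e$, $\|B^{-1}\|\simeq h_\tau^{-1}$, and $|\det B|\simeq h_\tau^2$, with constants depending only on the shape-regularity bound (here I use that the reference configuration is fixed, so no dependence on $\tau$ beyond its shape enters). Then I would transport the function: for $u\in H^1(\tau)$ set $\hat u=u\circ F$. Change of variables in the area integral contributes a factor $|\det B|\simeq h_e^2$, and in the edge integral a factor equal to the ratio of edge lengths, which is $\simeq h_e$ since the image edge has length $h_e$ and the reference edge has fixed length. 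The chain rule gives $\hat\nabla\hat u=B^T(\nabla u)\circ F$, so $|\hat\nabla\hat u|^2\simeq h_e^2|(\nabla u)\circ F|^2$.

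Putting these together: applying the reference inequality to $\hat u$, then converting each term back to $\tau$ via the change-of-variable factors, yields
\begin{equation*}
\int_e u^2\;\lesssim\; h_e\left(\int_{\hat e}\hat u^2\right)\;\lesssim\; h_e\left(\int_{\hat\tau}\hat u^2+\int_{\hat\tau}|\hat\nabla\hat u|^2\right)\;\lesssim\; h_e\left(h_e^{-2}\int_\tau u^2+h_e^{-2}\cdot h_e^2\int_\tau|\nabla u|^2\right),
\end{equation*}
which is exactly $\int_e u^2\lesssim h_e^{-1}\int_\tau u^2+h_e\sum_\alpha\int_\tau|\partial_\alpha u|^2$ with a constant depending only on the shape regularity of $\tau$. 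One point deserving a little care: the three edges of $\tau$ are generally not all images of the same reference edge under a single similarity, so to cover an arbitrary edge $e$ I would either allow three reference configurations $(\hat\tau,\hat e_i)$ (finitely many, each giving an absolute constant) or, more cleanly, note that the affine equivalence class of $(\tau,e)$ over all shape-regular triangles is compact in a suitable sense, so the reference constant can be taken uniform. The main (and only real) obstacle is bookkeeping the scaling factors correctly; there is no conceptual difficulty, and no dependence on the triangulation enters beyond the shape-regularity constant $\K$.
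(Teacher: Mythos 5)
Your proof is correct. The paper does not give a proof of this lemma—it simply cites the Arnold--Brezzi--Cockburn--Marini paper \cite{A-DG}—and your scaling argument (classical trace theorem on a fixed reference triangle, affine change of variables, bookkeeping of the factors $h_e$, $h_e^2$, and $h_e^{-2}$ under shape regularity, noting $h_e\simeq h_\tau$) is exactly the standard argument one would find there.
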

This can be found in \cite{A-DG}.
For piecewise functions in $H^1_h$, we have a trace theorem that \cite{compact}
\begin{lem}\label{tracetheorem}
There
exists a constant $C$ depending on $\Omega$ and the shape regularity
$\K$ of $\T_h$, but otherwise independent of the triangulation such that
\begin{equation}\label{Omega-trace}
\|u\|_{L^2(\partial\Omega)}\le C \|u\|_{H^1_h}\ \ \forall\ u\in H^1_h.
\end{equation}
\end{lem}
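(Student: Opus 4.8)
The plan is to split $u$ into a piecewise-polynomial part, to which a conforming reconstruction can be applied, and a remainder that is controlled directly by the element norms through the elementwise trace inequality~\eqref{trace}. Given $u\in H^1_h$, let $u_P\in H^1_h$ be the function whose restriction to each $\tau\in\T_h$ is the $L^2(\tau)$-projection of $u|_\tau$ onto linear polynomials. Since $u_P$ is at least as good an $L^2$ approximation of $u|_\tau$ as the elementwise mean, a Poincar\'e inequality gives $\|u-u_P\|_{0,\tau}\le C h_\tau|u|_{1,\tau}$, and since $u_P$ inherits the elementwise mean of $u$, standard inverse-inequality and Poincar\'e arguments give $|u_P|_{1,\tau}\le C|u|_{1,\tau}$. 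Applying~\eqref{trace} to $u-u_P$ on a boundary element $\tau$ with edge $e\subset\partial\Omega$, and using $h_e\simeq h_\tau\le\operatorname{diam}\Omega$, I would first obtain $\int_e(u-u_P)^2\le C h_\tau|u|_{1,\tau}^2$, hence $\|u-u_P\|_{L^2(\partial\Omega)}\le C\|u\|_{H^1_h}$. The same computation on interior edges shows that $\|\lbra u_P\rbra\|_{0,e}$ exceeds $\|\lbra u\rbra\|_{0,e}$ only by the traces of $u-u_P$ from the two adjacent elements, so that $\sum_{e\in\E^0_h}h^{-1}_e\|\lbra u_P\rbra\|_{0,e}^2\le C\|u\|_{H^1_h}^2$; together with $\|u_P\|_{0,\Omega}+|u_P|_{1,\Omega_h}\le C\|u\|_{H^1_h}$ this leaves only the bound on $\|u_P\|_{L^2(\partial\Omega)}$.

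Here the naive elementwise estimate $\int_e u_P^2\le C h_\tau^{-1}\|u_P\|_{0,\tau}^2$ is useless because of the factor $h_\tau^{-1}$, so a nonlocal argument is needed. The plan is to invoke the standard averaging (enriching) operator $E_h$ from the analysis of discontinuous Galerkin methods \cite{compact, A-DG}: $E_h u_P\in H^1(\Omega)$ is the continuous piecewise-linear function whose value at each vertex is the average of the values of $u_P$ there over the elements meeting that vertex. Its classical approximation estimates, valid under shape regularity alone, read
\[
\|u_P-E_h u_P\|_{0,\tau}^2+h_\tau^2|u_P-E_h u_P|_{1,\tau}^2\le C\sum_{e'}h_{e'}\|\lbra u_P\rbra\|_{0,e'}^2,
\]
the sum running over the interior edges of the patch of elements around $\tau$. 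Combined with the jump and norm bounds for $u_P$ from the first paragraph and the uniform overlap of the patches, these give $\|E_h u_P\|_{1,\Omega}\le C\|u\|_{H^1_h}$.

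Finally I would write $\|u_P\|_{L^2(\partial\Omega)}\le\|E_h u_P\|_{L^2(\partial\Omega)}+\|u_P-E_h u_P\|_{L^2(\partial\Omega)}$. The first term is at most $C\|E_h u_P\|_{1,\Omega}\le C\|u\|_{H^1_h}$ by the ordinary trace theorem on $H^1(\Omega)$. For the second, the inverse trace inequality for polynomials on a boundary element $\tau$ gives $\int_e(u_P-E_h u_P)^2\le C h_e^{-1}\|u_P-E_h u_P\|_{0,\tau}^2\le C\sum_{e'}\|\lbra u_P\rbra\|_{0,e'}^2$, and summing over boundary edges and bounding with $\operatorname{diam}\Omega$ yields $\|u_P-E_h u_P\|_{L^2(\partial\Omega)}^2\le C\sum_{e\in\E^0_h}h_e^{-1}\|\lbra u_P\rbra\|_{0,e}^2\le C\|u\|_{H^1_h}^2$. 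Putting the pieces together gives~\eqref{Omega-trace}.

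The main obstacle is precisely the bound on $\|u_P\|_{L^2(\partial\Omega)}$: the mass of a broken function in the layer of boundary elements carries a weight $h^{-1}$ and is not controlled elementwise by the broken $H^1$ norm, so information must be transported from the interior through the jump penalties --- which is exactly what the conforming reconstruction $E_h$ accomplishes --- while every constant must depend on the mesh only through the shape regularity $\K$, with no quasi-uniformity assumed. This is the content of \cite{compact}; the reduction to the polynomial case via the local $L^2$-projection serves only to make the enriching operator available for a general function in $H^1_h$.
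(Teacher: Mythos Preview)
Your argument is correct. The paper does not actually prove this lemma here; it simply states the result and cites \cite{compact} for the proof. So there is no ``paper's own proof'' to compare against in the present text.

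What you have supplied is a clean, self-contained argument along standard lines: reduce to a piecewise polynomial via the local $L^2$-projection, then invoke the averaging/enriching operator $E_h$ to pass to a conforming $H^1$ function where the classical trace theorem applies, and finally control the difference $u_P - E_h u_P$ on boundary edges using the enriching-operator estimate together with the inverse trace inequality. All steps depend on the mesh only through shape regularity, as required. The only place one should be slightly careful is the final summation $\sum_{e\subset\partial\Omega}\int_e(u_P-E_hu_P)^2 \le C\sum_{e'\in\E^0_h}\|\lbra u_P\rbra\|_{0,e'}^2$: here you are using that the patches of boundary elements have bounded overlap, so each interior edge is counted at most a fixed number of times --- this is indeed a consequence of shape regularity, and then the factor $\operatorname{diam}\Omega$ absorbs the missing $h_{e'}^{-1}$. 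This is presumably the content of \cite{compact} as well; your write-up makes the mechanism explicit.
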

The following compact embedding theorem can be derived from a result in \cite{DiPietro},
for which a direct proof can be found in \cite{compact}.
\begin{lem}\label{uniformcompactembedding}
Let $\T_{h_i}$ be a (infinite) class of shape regular but not necessarily quasi-uniform
triangulations of the polygonal domain $\Omega$, with a shape regularity constant $\K$.
For each $i$, let $H^1_{h_i}$ be the space of piecewise $H^1$ functions, subordinated to the
triangulation $T_{h_i}$,
equipped with the norm \eqref{K-h-norm-u}.
Let $\{u_i\}$ be a bounded sequence such that $u_i\in H^1_{h_i}$ for each $i$. I.e.,
there is a constant $C$, such that $\|u_i\|_{H^1_{h_i}}\le C$ for all $i$.
Then, the sequence $\{u_i\}$ has a convergent subsequence in $L^2$.
\end{lem}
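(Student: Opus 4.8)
The plan is to reduce the statement to finite-dimensional compactness by approximating each $u_i$, \emph{uniformly} well in $L^2$, by a piecewise constant function on a coarse partition of $\Omega$ that does not depend on $i$. For each integer $m\ge1$ fix a partition $\Omega=\bigcup_{k=1}^{K_m}T^m_k$ of $\Omega$ into finitely many Lipschitz pieces of diameter at most $1/m$ --- say, intersect $\Omega$ with the cells of a uniform grid of mesh size $1/m$ --- and let $\Pi_m$ be the $L^2(\Omega)$-orthogonal projection onto the $K_m$-dimensional space of functions that are constant on each $T^m_k$, so that $(\Pi_mv)|_{T^m_k}=|T^m_k|^{-1}\int_{T^m_k}v\,dx$. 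The key estimate to establish is the \textbf{scaled broken Poincar\'e inequality}
\[
\|v-\Pi_mv\|_{0,\Omega}\ \lesssim\ m^{-1}\,\|v\|_{H^1_h}\qquad\forall\,v\in H^1_h,
\]
with the implicit constant depending only on $\Omega$ and the shape regularity $\K$, hence independent of $\T_h$ and the same for every member of the family $\{\T_{h_i}\}$. Granting this, the theorem follows by a diagonal extraction: for each $m$ the set $\{\Pi_mu_i\}_i$ is bounded in a fixed finite-dimensional space, hence has a convergent subsequence, so passing to a diagonal subsequence indexed by an infinite $I\subset\mathbb N$ we may assume $\Pi_mu_i$ converges in $L^2$ as $i\to\infty$ through $I$, for every $m$. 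Then for $i,j\in I$ one has $\|u_i-u_j\|_{0,\Omega}\le\|u_i-\Pi_mu_i\|_{0,\Omega}+\|\Pi_m(u_i-u_j)\|_{0,\Omega}+\|u_j-\Pi_mu_j\|_{0,\Omega}\lesssim m^{-1}+\|\Pi_m(u_i-u_j)\|_{0,\Omega}$, so choosing $m$ large and then $i,j\in I$ large shows $\{u_i\}_{i\in I}$ is Cauchy in $L^2(\Omega)$, hence convergent.

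To prove the scaled broken Poincar\'e inequality I would argue cell by cell. It suffices to show that for each Lipschitz region $T$ of diameter $\delta\le1$ and each shape-regular triangulation one has
\[
\Big\|v-|T|^{-1}\!\int_Tv\Big\|_{0,T}^2\ \lesssim\ \delta^2\Big(\sum_{\tau\in\T_h}|v|_{1,\tau\cap T}^2+\sum_{e\in\E^0_h}h_e^{-1}\|\lbra v\rbra\|_{0,e\cap T}^2\Big),
\]
and then sum over $k=1,\dots,K_m$, using that the cells $T^m_k$ partition each edge $e$ so no jump term is counted more than once. By the dilation $x\mapsto x/\delta$ this reduces to the case $\delta\simeq1$, the gradient semi-norms and the $h_e^{-1}$-weighted jump terms being scale invariant in two dimensions while the left side carries the factor $\delta^2$; and for $\delta\simeq1$ it is precisely a localized Poincar\'e--Friedrichs inequality for piecewise $H^1$ functions in the spirit of \cite{Brenner-Korn}, noting that the norm \eqref{K-h-norm-u} dominates the functional appearing on the right there. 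The one delicate point --- and the main obstacle --- is that the restriction of $\T_h$ to a coarse cell $T$ is generally \emph{not} a triangulation of $T$: triangles straddling $\partial T$ are cut. Since cutting a triangle introduces no new interelement jump of $v$ (the function is $H^1$ across the cut), one re-triangulates the cut pieces with zero-jump diagonals and applies the Poincar\'e--Friedrichs estimate there; the crucial fact is that the constant in such an estimate depends on the geometry of $T$ and not on the fineness of the partition, so it stays uniform over the whole family $\{\T_{h_i}\}$. A careful execution of this reduction, including that uniformity, is what \cite{compact} supplies; it can also be obtained from \cite{DiPietro}.

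An alternative, and essentially equivalent, route is to verify the Fr\'echet--Kolmogorov (Kolmogorov--Riesz) compactness criterion directly. Extending each $u_i$ by zero outside $\Omega$, one has the uniform bound $\|u_i\|_{0,\Omega}\le\|u_i\|_{H^1_{h_i}}\le C$, so the family is bounded in $L^2(\mathbb R^2)$ with no mass at infinity, and it remains to show that $\sup_i\|u_i(\cdot+\zeta)-u_i\|_{0,\mathbb R^2}\to0$ as $\zeta\to0$. For $|\zeta|\le1$ this follows from a broken translation estimate $\|u_i(\cdot+\zeta)-u_i\|_{0,\mathbb R^2}^2\lesssim|\zeta|\,\|u_i\|_{H^1_{h_i}}^2$: one writes the difference as an integral of $\partial_tu_i$ along the translation segment, picking up a jump contribution at each interelement crossing, and then integrates over $x$ by Fubini; the $h_e^{-1}$ weighting in \eqref{K-h-norm-u} is exactly what absorbs the (possibly large) number of crossings, while the trace inequality of Lemma~\ref{tracetheorem} controls the thin boundary strip of width $\sim|\zeta|$ near $\partial\Omega$. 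Either way, the essential ingredient is a Poincar\'e-type or translation-type bound in which interelement jumps are weighted by $h_e^{-1}$, valid uniformly over non-quasi-uniform shape-regular meshes.
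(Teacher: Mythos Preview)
The paper does not actually prove this lemma: it only states it and defers to \cite{DiPietro} and the author's preprint \cite{compact}. Your two sketches are precisely the arguments behind those citations. The second route (Kolmogorov--Riesz via a broken translation estimate, with the $h_e^{-1}$ weighting in \eqref{K-h-norm-u} absorbing the interelement crossings) is the Di~Pietro--Ern argument; the first route (uniform $L^2$ approximation by coarse piecewise constants through a scaled broken Poincar\'e inequality, followed by diagonal extraction) is the style of the direct proof in \cite{compact}. So there is nothing to compare against in the paper itself, and your proposal correctly reconstructs what the cited references supply.

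One comment on the first route: you correctly flag the cut-triangle issue as the delicate point, but the resolution you sketch (re-triangulate the cut pieces, noting no new jumps are introduced) is not quite enough on its own, because the re-triangulated pieces can be arbitrarily degenerate slivers and a Brenner-type Poincar\'e--Friedrichs constant on the coarse cell $T$ could in principle feel that. The clean way around this --- and what \cite{compact} does --- is to avoid restricting the mesh to $T$ altogether: one bounds $\|v-\Pi_m v\|_{0,T}$ by a sum over the \emph{original} elements and edges that merely \emph{meet} $T$, using a chaining/averaging argument along paths of neighboring elements whose length is controlled by $\operatorname{diam}(T)$ and shape regularity. That keeps every constant depending only on $\K$ and on the geometry of $T$, never on the fineness or local degeneracy of cut pieces. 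Your scaling computation and the diagonal-extraction endgame are fine as written.
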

\begin{proof}[Proof of Theorem~\ref{Korn-thm}]
It follows from the inequality \eqref{Korn-Brenner} and the definitions \eqref{K-curvature} and \eqref{K-metric}
of $\rho_{\alpha\beta}$ and $\gamma_{\alpha\beta}$
that there is a constant $C$ such that
\begin{equation}\label{Korn-thm-proof1}
\|\!u, w\|_{\!H^1_h\x H^2_h}^2\le C\left(\ll\!u, w\ll_h^2+\sum_{\alpha=1,2}\|u_\alpha\|^2_{0,\Omega_h}+\|w\|^2_{1,\Omega_h}\right)\  \forall\ w\in H^2_h\ u_{\alpha}\in H^1_h.
\end{equation}
On a fixed triangulation $\T_h$, it follows from the Rellich--Kondrachov
compact embedding theorem and Peetre's lemma (Theorem 2.1, page 18 in \cite{Raviart}) that
there is a constant $C_{\T_h}$ such that
\begin{equation*}
\|\!u, w\|_{\!H^1_h\x H^2_h}\le C_{\T_h}\ll\!u, w\ll_h\  \forall\ w\in H^2_h\ u_{\alpha}\in H^1_h.
\end{equation*}
We show that for a class  of shape regular triangulations, such $C_{\T_h}$ has a upper bound that only
depends on the shape regularity $\K$ of the whole class. Otherwise, there exists a sequence
of triangulations $\{\T_{h_n}\}$ and an associated sequence of functions $(\!u^n, w^n)$ in
$\!H^1_{h_n}\x H^2_{h_n}$
such that
\begin{equation*}
\|\!u^n, w^n\|_{\!H^1_{h_n}\x H^2_{h_n}}=1 \text{ and  }\ll\!u^n, w^n\ll_{h_n}\le 1/n.
\end{equation*}
It follows from Lemma~\ref{uniformcompactembedding}
that there is a subsequence,
still denoted by $(\!u^n, w^n)$ such that $\!u^n$ converges to $\!u^0\in\!L^2$,
the piecewise derivatives $\partial_\alpha w^n$  converges to  $w^0_\alpha \in L^2$,
and $w^n$ converges to $w^0\in L^2$. We show that this $w^0$ is actually in $H^1$ and we have that
$\partial_\alpha w^0=w^0_\alpha$.
To see this, we only need to show that the weak derivatives of $w^0$ are $(w^0_1, w^0_2)$.
For a compactly supported smooth function $\phi$, we have
\begin{equation*}
\int_{\Omega}w^0\partial_1\phi=\lim_{n\to\infty}\int_{\Omega}w^n\partial_1\phi.
\end{equation*}
For each $n$,
\begin{equation*}
\int_{\Omega}w^n\partial_1\phi=
-\int_{\Omega_{h_n}}\partial_1w^n\phi+\sum_{e\in\E^0_{h_n}}\int_e\lbra w^n\rbra_{\bar n_1}\phi.
\end{equation*}
Here, $\langle \bar n_1, \bar n_2\rangle$ is the unit normal to the edge $e$ and
$\lbra w^n\rbra_{\bar n_1}$ is the jump of $w^n$ over $e$ with respect to $\bar n_1$.
The second term in the right hand side can be bounded by
\begin{equation*}
C\left[\sum_{e\in\E^0_{h_n}}h^{-1}_e\int_e\lbra w^n\rbra^2\right]^{1/2}\left[|\phi|^2_{0,\Omega}+
\sum_{\tau\in\T_{h_n}}h^2_\tau|\phi|^2_{1,\tau}\right]^{1/2},
\end{equation*}
where $C$ depends on $\K$ only. Since $\ll\!u^n, w^n\ll_h\to 0$, this upper bound tends to zero as $n\to\infty$.
Thus we have
\begin{equation*}
\int_{\Omega}w^0\partial_1\phi=-\int_{\Omega}w^0_1\phi.
\end{equation*}
This shows that $w^0_1=\partial_1 w^0$ that is the weak derivative of $w^0$. Similarly, we have  $w^0_2=\partial_2 w^0$, and therefore $w^0\in H^1$.
We proved that
\begin{equation}\label{limit-u-w}
\lim_{n\to\infty}\|\!u^n-\!u^0\|_{L^2}=0 \text{ and }\lim_{n\to\infty}\left(\sum_{\tau\in\T_{h_n}}\|w^n-w^0\|^2_{1,\tau}\right)^{1/2}=0.
\end{equation}

Next we show  that $u^0_\alpha\!a^\alpha+w^0\!a^3$ is a rigid body motion of the shell mid-surface.
Let $RM=\{(\!u, w)\}$ be the space of components functions of rigid body motions, i.e.,
$u_{\alpha}\!a^\alpha+w\!a^3$ are rigid body motions. Let $q^i$ be a smooth surface force field that annihilates $RM$, i.e.,
\begin{equation*}
\int_{\t\Omega}(u_{\alpha}q^{\alpha}+wq^3)=0 \ \forall\ (\!u, w)\in RM.
\end{equation*}
Such a force field is an admissible loading on the shell with totally free boundary.
We let $m^{\alpha\beta}$ and $n^{\alpha\beta}$ be a stress resultant and a stress couple equilibrating $q^i$ such that
\begin{equation*}
\int_{\t\Omega}n^{\alpha\beta}\rho_{\alpha\beta}(\!v, z)+m^{\alpha\beta}\gamma_{\alpha\beta}(\!v, z)=
\int_{\t\Omega}(q^\alpha v_{\alpha}+q^3z)\ \ \forall\ (\!v, z)\in \!H^1\x H^2.
\end{equation*}
One can choose $m^{\alpha\beta}, n^{\alpha\beta}$ in the following manner.
We consider a shell with the mid surface $\t\Omega$, but of unit thickness $\eps=1$, loaded by
$q^i$, and free on its entire lateral boundary. The shell model has a unique solution $(\!u_1, w_1)$
in the quotient
space $(\!H^1\x H^2)/RM$. We then take
\begin{equation*}
n^{\alpha\beta}=\frac13a^{\alpha\beta\lambda\gamma}\rho_{\lambda\gamma}(\!u_1, w_1),\quad
m^{\alpha\beta}=a^{\alpha\beta\lambda\gamma}\gamma_{\lambda\gamma}(\!u_1, w_1).
\end{equation*}
Since this $(\!u_1, w_1)$ is the solution of a rather regular elliptic equation, under our assumption on the regularity of the shell model, 
we have $w_1\in H^4$ and $\!u_1\in \!H^3$.
Using the Green's theorem on surfaces, see the appendix, in view of the definitions \eqref{K-curvature} and \eqref{K-metric},
for an element $\tau\in\T_{h_h}$, we have
\begin{multline}\label{shell-equilibrium}
\int_{\t\tau}n^{\alpha\beta}\rho_{\alpha\beta}(\!v, z)+m^{\alpha\beta}\gamma_{\alpha\beta}(\!v, z)\\
=
\int_{\t\tau}\left(-m^{\alpha\beta}|_{\beta}-2n^{\lambda\gamma}|_{\gamma}b^{\alpha}_{\lambda}-
n^{\lambda\gamma}b^{\alpha}_{\lambda|\gamma}
\right)v_{\alpha}+
\int_{\t\tau}\left(n^{\alpha\beta}|_{\alpha\beta}-n^{\alpha\beta}c_{\alpha\beta}-m^{\alpha\beta}b_{\alpha\beta}
\right)z\\
+
\int_{\partial\t\tau}\left(2n^{\lambda\gamma}b^{\alpha}_{\lambda}n_{\gamma}+m^{\alpha\beta}n_{\beta}
\right)v_{\alpha}-
\int_{\partial\t\tau}n^{\alpha\beta}|_{\beta}n_{\alpha}z+
\int_{\partial\t\tau}n^{\alpha\beta}n_{\beta}\partial_{\alpha}z.
\end{multline}
This identity is also valid when $\tau$ is replaced by $\Omega$, and from that we get the following equilibrium equations and boundary conditions.
\begin{equation}\label{m-n-equilibrium}
\begin{gathered}
-m^{\alpha\beta}|_{\beta}-2n^{\lambda\gamma}|_{\gamma}b^{\alpha}_{\lambda}-
n^{\lambda\gamma}b^{\alpha}_{\lambda|\gamma}
=q^{\alpha},\quad
n^{\alpha\beta}|_{\alpha\beta}-n^{\alpha\beta}c_{\alpha\beta}-m^{\alpha\beta}b_{\alpha\beta}
=q^3\ \text{ in }\Omega,\\
2n^{\lambda\gamma}b^{\alpha}_{\lambda}n_{\gamma}+m^{\alpha\beta}n_{\beta}=0,\quad
-n^{\alpha\beta}|_{\beta}n_{\alpha}
-D_{\!s}(n^{\alpha\beta}n_{\beta}s_\beta)=0,\quad
D_{\!n}(n^{\alpha\beta}n_{\beta}n_\alpha)=0\ \text{ on }\partial\Omega.
\end{gathered}
\end{equation}
Since $(\!u^n, w^n)\to (\!u^0, w^0)$ in $L^2$, we have
\begin{equation*}
\int_{\t\Omega}(u^0_{\alpha}q^{\alpha}+w^0q^3)
=\lim_{n\to\infty}\int_{\t\Omega_{h_n}}(u^n_{\alpha}q^{\alpha}+w^nq^3).
\end{equation*}
For a given $n$, by using \eqref{shell-equilibrium} on each element of $\t\tau\subset\t\Omega_{h_n}$, and summing up, we get
\begin{multline*}
\int_{\t\Omega_{h_n}}(u^n_{\alpha}q^{\alpha}+w^nq^3)
\\
=\int_{\t\Omega_{h_n}}\left[u^n_{\alpha}
\left(-m^{\alpha\beta}|_{\beta}-2n^{\lambda\gamma}|_{\gamma}b^{\alpha}_{\lambda}-
n^{\lambda\gamma}b^{\alpha}_{\lambda|\gamma}\right)
+w^n
\left(n^{\alpha\beta}|_{\alpha\beta}-n^{\alpha\beta}c_{\alpha\beta}-m^{\alpha\beta}b_{\alpha\beta}\right)
\right]\hfill\\
=\sum_{\tau\in\T_{h_n}}
\int_{\t\tau}\left[n^{\alpha\beta}\rho_{\alpha\beta}(\!u^n, w^n)+m^{\alpha\beta}\gamma_{\alpha\beta}(\!u^n, w^n)\right] \hfill \\
\hfill -\sum_{\tau\in\T_{h_n}}\left[\int_{\partial\t\tau}
\left(2n^{\lambda\gamma}b^{\alpha}_{\lambda}n_{\gamma}+m^{\alpha\beta}n_{\beta}
\right)u^n_{\alpha}-
\int_{\partial\t\tau}n^{\alpha\beta}|_{\beta}n_{\alpha}w^n+
\int_{\partial\t\tau}n^{\alpha\beta}n_{\beta}\partial_{\alpha}w^n\right].
\end{multline*}
Using the boundary condition in \eqref{m-n-equilibrium}, we right the second line as
\begin{equation*}
-\sum_{e\in\E^0_{h_n}}\left[\int_{\t e}
\left(2n^{\lambda\beta}b^{\alpha}_{\lambda}+m^{\alpha\beta}
\right)\lbra u^n_{\alpha}\rbra_{n_\beta}-
\int_{\t e}n^{\alpha\beta}|_{\beta}\lbra w^n\rbra_{n_\alpha}+
\int_{\t e}n^{\alpha\beta}\lbra \partial_{\alpha}w^n\rbra_{n_\beta}\right].
\end{equation*}
We apply the trace estimate \eqref{trace} to each of the edges, and use Cauchy--Schwarz inequality,  to obtain
the following estimate.
\begin{multline*}
\left|\int_{\t\Omega_{h_n}}(u^n_{\alpha}q^{\alpha}+w^nq^3)\right|
\le C\\
\left[\sum_{\tau\in\T_{h_n}; \alpha, \beta=1,2}\left(| n^{\alpha\beta}|^2_{0,\tau}
+h^2_\tau|n^{\alpha\beta}|^2_{1,\tau}+h^4_\tau| n^{\alpha\beta}|^2_{2,\tau}
+| m^{\alpha\beta}|^2_{0,\tau}+h^2_\tau| m^{\alpha\beta}|^2_{1,\tau}\right)
\right]^{1/2}\ll \!u^n, w^n\ll_{h_n}.
\end{multline*}
Since $q^i$, $m^{\alpha\beta}$, and $n^{\alpha\beta}$ are independent of $n$,
and $\lim_{n\to\infty}\ll \!u^n, w^n\ll_{h_n}=0$, we have
$\int_{\t\Omega}(u^0_{\alpha}q^{\alpha}+w^0q^3)=0$. This is true for any smooth $q^i\in RM^\perp$. Therefore, $(\!u^0, w^0)\in RM$.

Finally, we show that $(\!u^0, w^0)=0$.
It follows from \eqref{Korn-thm-proof1}, \eqref{limit-u-w} and $\ll \!u^n, w^n\ll_{h_n}\to 0$ that
$\lim_{n\to\infty}\|\!u^n-\!u^0, w^n-w^0\|_{\!H^1_h\x H^2_h}=0$. Since $f$ is uniformly continuous
with respect to the norm $\|\cdot\|_{\!H^1_h\x H^2_h}$ and since $f(\!u^n, w^n)\to 0$ ($f$ is a part
in the triple norm), we see
$f(\!u^0, w^0)=0$. Thus $(\!u^0, w^0)=0$. Therefore, $\lim_{n\to\infty}\|\!u^n, w^n\|_{\!H^1_h\x H^2_h}=0$,
which is contradict to the assumption that $\|\!u^n, w^n\|_{\!H^1_h\x H^2_h}=1$.
\end{proof}


As an example, we take
\begin{equation*}
f(\!u, w)=
\left[\sum_{e\in\E^{S}_h\cup\E^D_h}
\left(\int_{e}\sum_{\alpha=1,2}u^2_{\alpha}+\int_{e}w^2\right)
+\sum_{e\in\E^D_h}\int_{e}(D_{\!n}w)^2\right]^{1/2}.
\end{equation*}
It follows from Lemma~\ref{tracetheorem} that there is a $C$ only dependent on $\K$ such that
the continuity condition \eqref{f-continuous} is satisfied by this $f$.
Under the assumption that  the measure of $\partial^D\Omega$ is positive,
it is verified in \cite{BCM} that if $(\!u, w)\in \!H^1\x H^2$ defines a rigid body motion and $f(\!u, w)=0$ then $\!u=0$ and $w=0$.
With this $f$ in the Korn's inequality \eqref{Korn-inequality}, 
we add boundary penalty term
\begin{equation*}
\sum_{e\in\E^{S}_h\cup\E^D_h}
\left(h^{-1}_e\int_{e}\sum_{\alpha=1,2}u^2_{\alpha}+h^{-3}_e\int_{e}w^2\right)
+\sum_{e\in\E^D_h}
h^{-1}_e\int_{e}(D_{\!n}w)^2
\end{equation*}
to the squares of both sides of \eqref{Korn-inequality}. We then have the equivalence that there is a constant $C$ that may be dependent
on the shape regularity $\K$ of the triangulation $\T_h$, but otherwise independent of the triangulation, such that
\begin{equation}\label{Hh-ah-equiv}
C^{-1}\|\!u, w\|_{a_h}\le \|\!u, w\|_{\H_h}\le C\|\!u, w\|_{a_h} \ \forall\ \!u\in \!H^1_h, w\in H^2_h.
\end{equation}
Here
\begin{multline}\label{Hh-norm}
\|\!u, w\|^2_{\H_h}:=\sum_{\tau\in\T_h}\left(\|u\|^2_{1,\tau}+\|w\|^2_{2,\tau}\right)\\
+\sum_{e\in \E^0_h}\left(
h^{-1}_e\int_{e}\sum_{\alpha=1,2}\lbra u_\alpha\rbra^2+\sum_{\alpha=1, 2}
h^{-1}_e\int_{e}\lbra \partial_\alpha w\rbra^2
+
h^{-3}_e\int_{e}\lbra w\rbra^2\right)\\
+
\sum_{e\in\E^{S}_h\cup\E^D_h}
\left(h^{-1}_e\int_{e}\sum_{\alpha=1,2}u^2_{\alpha}+h^{-3}_e\int_{e}w^2\right)
+\sum_{e\in\E^D_h}
h^{-1}_e\int_{e}(D_{\!n}w)^2,
\end{multline}
\begin{multline}\label{ah-norm}
\|\!u, w\|^2_{a_h}:=
\sum_{\tau\in\T_h}\sum_{\alpha,\beta=1,2}\left(\|\rho_{\alpha\beta}(\!u, w)\|^2_{0,\tau}+\|\gamma_{\alpha\beta}(\!u, w)\|^2_{0,\tau}\right)\\
+\sum_{e\in \E^0_h}\left(
h^{-1}_e\int_{e}\sum_{\alpha=1,2}\lbra u_\alpha\rbra^2+\sum_{\alpha=1, 2}
h^{-1}_e\int_{e}\lbra \partial_\alpha w\rbra^2
+
h^{-3}_e\int_{e}\lbra w\rbra^2\right)\\
+
\sum_{e\in\E^{S}_h\cup\E^D_h}
\left(h^{-1}_e\int_{e}\sum_{\alpha=1,2}u^2_{\alpha}+h^{-3}_e\int_{e}w^2\right)
+\sum_{e\in\E^D_h}
h^{-1}_e\int_{e}(D_{\!n}w)^2.
\end{multline}

\section{Error analysis of  the finite element method}
\label{ErrorAnalysis}
The finite element model defined by \eqref{form_a} to \eqref{K-fem} fits in the form of the mixed equation
\eqref{isomorphism-abs}. We verify the conditions \eqref{isomorphism-condition} for the bilinear forms
defined by \eqref{form_a}, \eqref{form_b}, and \eqref{form_c}, with the space defined by \eqref{FE-space},
in which the $\H_h$ norm is defined by \eqref{Hh-norm}. We define the $\V_h$ norm by
\begin{equation}\label{Vh-norm}
\|\N\|_{\V_h}:=\left(\sum_{\alpha,\beta=1,2}\|\N^{\alpha\beta}\|^2_{0,\Omega} \right)^{1/2}.
\end{equation}
We show that there is a constant $C$ that depends on the shell geometry and shape regularity $\K$ of the triangulation $\T_h$, but otherwise, independent of the
triangulation such that
\begin{align} 
|a(\!u, w; \!v, z)| &\le C\|\!u, w\|_{\H_h}\|\!v, z\|_{\H_h}&\  &\forall\ (\!u, w), (\!v, z)\in \H_h,   \label{a-condition1}      \\
\|\!v, z\|_{\H_h}^2& \le Ca(\!v, z; \!v, z)&\  &\forall\  (\!v, z)\in \H_h,\label{a-condition2} \\
|b(\N; \!v, z)| &\le C\|\!v, z\|_{\H_h}\|\N\|_{\V_h}&\  &\forall\ (\!v, z)\in \H_h, \N\in \V_h,\label{b-condition}\\
|c(\M, \N)|&\le C\|\M\|_{V_h}\|\N\|_{\V_h}&\  &\forall\ \M, \N\in \V_h,\label{c-condition1}\\
\|\N\|_{\V_h}^2&\le Cc(\N, \N)&\  &\forall\ \N\in \V_h. \label{c-condition2}
\end{align} 
We start with \eqref{a-condition1}.
From the definition \eqref{form_ub_a} and \eqref{form_a} of the bilinear form $a$, using the property of the elastic tensor \eqref{elastic-tensor-equiv},
we see the first line
in \eqref{form_ub_a} is bounded as
\begin{multline*}
\left|\int_{\t\Omega_h}\left[a^{\alpha\beta\lambda\gamma}
\rho_{\lambda\gamma}(\!u, w)\rho_{\alpha\beta}(\!v, z)
+a^{\alpha\beta\lambda\gamma}
\gamma_{\lambda\gamma}(\!u, w)\gamma_{\alpha\beta}(\!v, z)\right]\right|
\\
\le C\|\!u, w\|_{a_h}\|\!v, z\|_{a_h}\le C \|\!u, w\|_{\H_h}\|\!v, z\|_{\H_h}.
\end{multline*}
We then estimate the fifth line in \eqref{form_ub_a}.
Let $e\in\E^0_h$ be one of the interior edges shared by elements $\tau_1$ and $\tau_2$. Using the elastic tensor property \eqref{elastic-tensor-equiv},
the H\"older inequality,
the trace inequality \eqref{trace},  the formula \eqref{K-curvature} and \eqref{covariant-derivative} for $\rho_{\lambda\gamma|\beta}$, and
inverse inequality for finite element functions, we have
\begin{multline*}
\left|\int_{\t e}
a^{\alpha\beta\lambda\gamma}\lbrac\rho_{\lambda\gamma|\beta}(\!v, z)\rbrac\lbra w\rbra_{n_{\alpha}}\right|\le C
\left[\sum_{\lambda, \gamma,\beta=1,2}h^3_e\int_e\lbrac\rho_{\lambda\gamma|\beta}(\!v, z)\rbrac^2
\right]^{1/2}
\left[h^{-3}_e\int_e\lbra w\rbra^2
\right]^{1/2}\\
\le C
\left[\sum_{\lambda, \gamma,\beta, \delta, \alpha=1,2}\left(h^2_{\tau_\delta}\int_{\tau_\delta}|\rho_{\lambda\gamma|\beta}(\!v, z)|^2+
h^4_{\tau_\delta}\int_{\tau_\delta}|\partial_\alpha\rho_{\lambda\gamma|\beta}(\!v, z)|^2\right)
\right]^{1/2}
\left[h^{-3}_e\int_e\lbra w\rbra^2\right]^{1/2}
\\
\le C
\left[\sum_{\delta=1,2}\left(h^2_{\tau_\delta}
(\|z\|^2_{3,\tau_\delta}+\|\!v\|^2_{2,\tau_\delta})+
h^4_{\tau_\delta}
(\|z\|^2_{4,\tau_\delta}+\|\!v\|^2_{3,\tau_\delta})
\right)
\right]^{1/2}
\left[h^{-3}_e\int_e\lbra w\rbra^2\right]^{1/2}\\
\le C
\left[\sum_{\delta=1,2}\left(
\|z\|^2_{2,\tau_\delta}+\|\!v\|^2_{1,\tau_\delta}
\right)
\right]^{1/2}
\left[h^{-3}_e\int_e\lbra w\rbra^2\right]^{1/2}.
\end{multline*}
From this, we get the estimate on the fifth line in \eqref{form_ub_a} that
\begin{multline}\label{a-5-est}
\left|\int_{\t\E^0_h}
a^{\alpha\beta\lambda\gamma}\lbrac\rho_{\lambda\gamma|\beta}(\!v, z)
\rbrac\lbra w\rbra_{n_{\alpha}}
+
\int_{\t\E^0_h}
a^{\alpha\beta\lambda\gamma}\lbrac\rho_{\lambda\gamma|\beta}(\!u, w)
\rbrac\lbra z\rbra_{n_{\alpha}}\right|\\
\le C
\left[\sum_{\tau\in\T_h}(\|z\|^2_{2, \tau}+\|\!v\|^2_{1,\tau})
\right]^{1/2}
\left[\sum_{e\in\E^0_h}h^{-3}_e\int_e\lbra w\rbra^2\right]^{1/2}\\
+C\left[\sum_{\tau\in\T_h}(\|w\|^2_{2, \tau}+\|\!u\|^2_{1,\tau})
\right]^{1/2}
\left[\sum_{e\in\E^0_h}h^{-3}_e\int_e\lbra z\rbra^2\right]^{1/2}.
\end{multline}
The other lines in \eqref{form_ub_a} and the penalty terms in \eqref{form_a} can be bounded in a very same manner.
All these together with applications of Cauchy--Schwarz inequality proves \eqref{a-condition1}.

Next, we consider \eqref{a-condition2}.
Let $B(\!u, w;\!v, z)$ be a bilinear form
defined by the sum of all the lines but the first one in the definition \eqref{form_ub_a} of $\ub a(\!u, w;\!v, z)$.
In view of the equivalence \eqref{Hh-ah-equiv}, there are constants $C_1$ and $C_3$ that depend on the shell mid-surface
and $\K$, and $C_2$ that depend on the penalty constant $\C$ in \eqref{form_a} such that
\begin{multline*}
a(\!v, z;\!v, z)\ge
C_1\|\!v, z\|^2_{\H_h}+C_2
\left[\sum_{e\in \E^0_h}\left(
\sum_{\alpha=1,2}h^{-1}_e\int_{e}\lbra v_\alpha\rbra^2+\sum_{\alpha=1, 2}
h^{-1}_e\int_{e}\lbra \partial_\alpha z\rbra^2
+
h^{-3}_e\int_{e}\lbra z\rbra^2\right)\right.\\
\left.
+
\sum_{e\in\E^{S}_h\cup\E^D_h}
\left(\sum_{\alpha=1,2}h^{-1}_e\int_{e}v^2_{\alpha}+h^{-3}_e\int_{e}z^2\right)
+\sum_{e\in\E^D_h}
h^{-1}_e\int_{e}(D_{\!n}z)^2\right]
-
C_3|B(\!v, z; \!v, z)|
\end{multline*}
Using the same argument as in \eqref{a-5-est}, we have an upper bound that
\begin{multline}
|B(\!v, z; \!v, z)|\le C
\|\!v, z\|_{\H_h}
\left[\sum_{e\in \E^0_h}\left(\sum_{\alpha=1,2}
h^{-1}_e\int_{e}\lbra v_\alpha\rbra^2+\sum_{\alpha=1, 2}
h^{-1}_e\int_{e}\lbra \partial_\alpha z\rbra^2
+
h^{-3}_e\int_{e}\lbra z\rbra^2\right)\right.\\
\left.
+
\sum_{e\in\E^{S}_h\cup\E^D_h}
\left(\sum_{\alpha=1,2}h^{-1}_e\int_{e}v^2_{\alpha}+h^{-3}_e\int_{e}z^2\right)
+\sum_{e\in\E^D_h}
h^{-1}_e\int_{e}(D_{\!n}z)^2
\right]^{1/2}.
\end{multline}
It follows from Cauchy--Schwarz inequality that when the penalty constant $\C$ in \eqref{form_a} is sufficiently big (which makes
$C_2$ sufficiently big)
there is a $C$ such that \eqref{a-condition2} holds. 
The continuity \eqref{b-condition} is proved similarly as \eqref{a-condition1}, but simpler.
The conditions \eqref{c-condition1} and \eqref{c-condition2} are trivial consequences of \eqref{compliance-tensor-equiv}.

Thus the finite element model \eqref{K-fem} has a unique solution
in the finite element space \eqref{FE-space}.
Corresponding to the weak norm \eqref{isomorphism-weak}, we define a weaker (semi) $\overline\V_h$ norm for finite element function in $\V_h$
\begin{equation}\label{isomorphism-weak-N}
|\N|_{\overline\V_h}:=\sup_{(\!v, z)\in \H_h}\frac{b(\N; \!v, z)}{\|\!v, z\|_{\H_h}}\ \ \forall\ \N\in \V_h.
\end{equation}
We are now in a situation for which Theorem~\ref{isomorphism-thm} is applicable. From that theorem,
we have the inequality that there exists a $C$ that could be dependent on the shell mid-surface and the shape regularity $\K$
of the triangulation $\T_h$, but otherwise independent
of the finite element mesh and the shell thickness $\eps$ such that
\begin{multline*}
\|(\!u, w)\|_{\H_h}+|\M|_{\overline\V_h}+\eps\|\M\|_{\V_h}\\
\le C
\sup_{(\!v, z)\in\H_h, \N\in\V_h}
\frac{a(\!u, w;\!v, z)+b(\M; \!v, z)-b(\N; \!u, w)+\eps^2
c(\M, \N)} {\|(\!v, z)\|_{\H_h}+|\N|_{\overline\V_h}+\eps\|\N\|_{\V_h}}\\
 \ \forall\
(\!u, w)\in\H_h, \M\in\V_h.
\end{multline*}
Let $\!u\e, w\e,\M\e$ be the solution to the Koiter model \eqref{K-P-model}, let
$\!u^h, w^h, \M^h$ be the finite element solution to the finite element model \eqref{K-fem},
and let $\!u^I, w^I, \M^I$ be an interpolation to the Koiter model solution from the finite element space.
Since the finite element method \eqref{K-fem} and the Koiter model \eqref{K-P-model}
are consistent, we have
\begin{multline}\label{error-fraction}
\|(\!u^h-\!u^I, w^h-w^I)\|_{\H_h}+|\M^h-\M^I|_{\overline\V_h}+\eps\|\M^h-\M^I\|_{\V_h}\\
\le
\sup_{(\!v, z)\in\H_h, \N\in\V_h}
\frac{\left[\begin{gathered}
a(\!u^h-\!u^I, w^h-w^I;\!v, z)+b(\M^h-\M^I; \!v, z)\\
-b(\N; \!u^h-\!u^I, w^h-w^I)+\eps^2
c(\M^h-\M^I, \N)\end{gathered}
\right]} {\|(\!v, z)\|_{\H_h}+|\N|_{\overline\V_h}+\eps\|\N\|_{\V_h}}\\
=
\sup_{(\!v, z)\in\H_h, \N\in\V_h}
\frac{\left[\begin{gathered}
a(\!u\e-\!u^I, w\e-w^I;\!v, z)+b( \M\e-\M^I; \!v, z)\\
-b(\N; \!u\e-\!u^I, w\e-w^I)+\eps^2
c(\M\e-\M^I, \N)\end{gathered}
\right]} {\|(\!v, z)\|_{\H_h}+|\N|_{\overline\V_h}+\eps\|\N\|_{\V_h}}.
\end{multline}
We estimate the four terms in the numerator of above last line one by one.
\begin{lem}\label{a-error-lem}
There is a constant $C$ independent of $\T_h$ such that
\begin{multline}\label{a-error}
\left|a(\!u\e-\!u^I, w\e-w^I;\!v, z)\right|
\\
\le C\|(\!v, z)\|_{\H_h}\left[
\sum_{\tau\in\T_h}\left(
\sum_{k=0}^4h^{2k-4}_\tau|w\e-w^I|^2_{k, \tau}+\sum_{k=0}^3\sum_{\alpha=1,2}h^{2k-2}_\tau|u\e_{\alpha}-u^I_{\alpha}|^2_{k, \tau}\right)\right]^{1/2}
\ \forall\ (\!v, z)\in \H_h.
\end{multline}
\end{lem}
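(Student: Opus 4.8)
The plan is to estimate $a(\!u\e-\!u^I, w\e-w^I;\!v, z)$ by splitting the bilinear form according to its definition in \eqref{form_ub_a} and \eqref{form_a} and bounding each of the roughly a dozen resulting pairings separately, reusing the mechanics already employed for the continuity estimate \eqref{a-condition1} and the prototype bound \eqref{a-5-est}. Write $\!e=\!u\e-\!u^I$ and $e=w\e-w^I$ for the interpolation errors; on each $\tau\in\T_h$ one has $e_\alpha\in H^3(\tau)$ and $e\in H^4(\tau)$ since the model solution has this regularity and the interpolant is a polynomial. The one structural difference from the proof of \eqref{a-condition1} is that inverse inequalities will be applied only to the finite element functions $v_\alpha,z$, never to $\!e$ or $e$: every edge quantity built from $\!e$ or $e$ is passed to the element interior by the trace inequality \eqref{trace} alone, so that genuine higher derivatives together with their higher powers of $h_\tau$ are retained, which is exactly what produces the weighted seminorms on the right of \eqref{a-error}.

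For the element integrals (the first line of \eqref{form_ub_a}) I would use the continuity \eqref{elastic-tensor-equiv} of the elastic tensor, the Cauchy--Schwarz inequality, and the bound $\|\rho(\!v,z)\|_{0,\Omega_h}+\|\gamma(\!v,z)\|_{0,\Omega_h}\le C\|\!v,z\|_{\H_h}$ furnished by the equivalence \eqref{Hh-ah-equiv}; expanding $\rho_{\alpha\beta}(\!e,e)$ and $\gamma_{\alpha\beta}(\!e,e)$ via \eqref{K-curvature}--\eqref{K-metric} then leaves only the seminorms $|e|_{k,\tau}$ with $k\le 2$ and $|e_\alpha|_{k,\tau}$ with $k\le 1$, all present on the right of \eqref{a-error}. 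For each interior-edge pairing of \eqref{form_ub_a} the recipe is that of \eqref{a-5-est}: apply H\"older on the edge to separate the factor carrying $(\!v,z)$ from the factor carrying $(\!e,e)$; control the $(\!v,z)$-factor --- a jump of $v_\alpha$, $\partial_\alpha z$ or $z$, or an average of $\rho(\!v,z)$ or $\rho_{\lambda\gamma|\beta}(\!v,z)$ --- by the matching penalty piece of $\|\!v,z\|_{\H_h}$, using \eqref{trace} and an inverse estimate for the averages as in the proof of \eqref{a-condition1}; and control the $(\!e,e)$-factor by \eqref{trace} together with \eqref{covariant-derivative} for $\rho_{\lambda\gamma|\beta}$, using that $\!u\e$ and $w\e$ are single valued so the jumps of $\!e$ and $e$ are bounded by their two-sided traces. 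Because $h_e\simeq h_\tau$ on a shape regular mesh, matching powers of $h$ then yields exactly the entries of \eqref{a-error}; for instance the fifth-line term $\int_{\t\E^0_h}a^{\alpha\beta\lambda\gamma}\lbrac\rho_{\lambda\gamma|\beta}(\!e,e)\rbrac\lbra z\rbra_{n_\alpha}$ produces $h^2_\tau|e|^2_{3,\tau}+h^4_\tau|e|^2_{4,\tau}$ plus lower order contributions, which are precisely the $k=3,4$ terms.

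The boundary pairings on $\t\E^{S\cup D}$ and $\t\E^D$ are handled in the same way: there $u\e_\alpha$, $w\e$ (and $D_{\!n}w\e$ on $\partial^D\Omega$) vanish, so the traces of $\!e$, $e$, $D_{\!n}e$ coincide up to sign with those of $\!u^I$, $w^I$, $D_{\!n}w^I$, and each boundary integral pairs an $(\!e,e)$-factor estimated by \eqref{trace} with one of the boundary penalty factors in $\|\!v,z\|_{\H_h}$. The only slightly different term is the one containing $D_{\!s}(a^{\alpha\beta\lambda\gamma}\rho_{\lambda\gamma}(\!e,e)n_\beta s_\alpha)$: the tangential edge derivative costs one more derivative, so one applies \eqref{trace} to the first and second order derivatives of $\rho(\!e,e)$ and absorbs the result into the $k=4$ (for $e$) and $k=3$ (for $e_\alpha$) terms. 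Finally the penalty terms of \eqref{form_a} are bounded directly by Cauchy--Schwarz and \eqref{trace}. Summing all contributions with Cauchy--Schwarz over edges and elements gives \eqref{a-error}.

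I expect the main obstacle to be organizational rather than conceptual: one must, for each of the many pairings in \eqref{form_ub_a}--\eqref{form_a}, check that after applying \eqref{trace} (and, for the finite element factors only, an inverse estimate) the leftover powers of $h_\tau$ land on exactly the Sobolev index appearing in \eqref{a-error} --- in particular that no term ever requires a derivative of $w\e-w^I$ of order higher than four or of $u\e_\alpha-u^I_\alpha$ of order higher than three, which is precisely what fixes the ranges $k\le 4$ and $k\le 3$ in the statement.
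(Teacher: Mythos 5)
Your proposal is correct and follows essentially the same strategy as the paper: decompose $a$ into the terms of \eqref{form_ub_a}--\eqref{form_a}, use \eqref{elastic-tensor-equiv} and \eqref{Hh-ah-equiv} for the area integrals, and for each edge pairing apply H\"older on the edge, the trace inequality \eqref{trace} (plus inverse estimates only on the finite-element side) to absorb the $(\!v,z)$ factor into $\|\!v,z\|_{\H_h}$, while passing the interpolation-error factor into the element interior via \eqref{trace} alone so that the genuine higher-order seminorms $|w\e-w^I|_{k,\tau}$ ($k\le 4$) and $|u\e_\alpha-u^I_\alpha|_{k,\tau}$ ($k\le 3$) appear with the weights $h_\tau^{2k-4}$, $h_\tau^{2k-2}$. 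Your sample computation for the $\lbrac\rho_{\lambda\gamma|\beta}(\!e,e)\rbrac\lbra z\rbra_{n_\alpha}$ term matches the paper's treatment, as does your handling of the boundary terms and the interior penalty terms.
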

\begin{proof}
In view of the formulas \eqref{form_a} and \eqref{form_ub_a}, there are totally $26$ terms in the expression of $a(\!u\e-\!u^I, w\e-w^I;\!v, z)$.
The following inequalities provide bounds to various terms.
Using the Cauchy--Schwarz inequality, the equivalence \eqref{Hh-ah-equiv}, and the definition \eqref{ah-norm}, we get
\begin{multline}\label{term-1-2}
\left|\int_{\t\Omega_h}\left[a^{\alpha\beta\lambda\gamma}
\rho_{\lambda\gamma}(\!u\e-\!u^I, w\e-w^I)\rho_{\alpha\beta}(\!v, z)
+
a^{\alpha\beta\lambda\gamma}
\gamma_{\lambda\gamma}(\!u\e-\!u^I, w\e-w^I)\gamma_{\alpha\beta}(\!v, z)\right]\right|\\
\le C \left[\sum_{\tau\in\T_h}\left(\|w\e-w^I\|^2_{2,\tau}+\|\!u\e-\!u^I\|^2_{1,\tau}\right)\right]^{1/2}\|\!v, z\|_{\H_h}.
\end{multline}
Using the trace theorem \eqref{trace} and inverse inequality for finite element functions, we get
\begin{multline}\label{term-3}
\left|\int_{\t\E^0_h}\left[2a^{\tau\beta\sigma\lambda}\lbrac\rho_{\sigma\tau}(\!v, z)\rbrac b^{\alpha}_{\beta}
+a^{\alpha\lambda\gamma\delta}\lbrac\gamma_{\gamma\delta}(\!v, z)\rbrac\right]
\lbra u\e_{\alpha}-u^I_\alpha\rbra_{n_{\lambda}}\right|\\
\le C
\sum_{e\in\E^0_h}
\left[h_e\int_e  \lbrac\rho_{\alpha\beta}(\!v, z)\rbrac^2+\lbrac\gamma_{\alpha\beta}(\!v, z)\rbrac^2
 \right]^{1/2}
\left[
h^{-1}_e\int_e\lbra \!u\e-\!u^I\rbra^2
\right]^{1/2}\\
\le C
\left[\sum_{\tau\in\T_h}\left[\|z\|^2_{2,\tau}+\|\!v\|^2_{1,\tau}+h^2_\tau(|z|^2_{3,\tau}+|\!v|^2_{2,\tau})\right)
 \right]^{1/2}\\
\left[\sum_{\tau\in\T_h}
\left(h^{-2}_\tau|\!u\e-\!u^I|^2_{0,\tau}+|\!u\e-\!u^I|^2_{1,\tau}\right)
\right]^{1/2}\\
\le C
\left[\sum_{\tau\in\T_h}
\left(h^{-2}_\tau|\!u\e-\!u^I|^2_{0,\tau}+|\!u\e-\!u^I|^2_{1,\tau}\right)
\right]^{1/2}\|\!v, z\|_{\H_h}.
\end{multline}
\begin{multline}\label{term-4}
\left|\int_{\t\E^0_h}\left[2a^{\tau\beta\sigma\lambda}\lbrac\rho_{\sigma\tau}(\!u\e-\!u^I, w\e-w^I)\rbrac
b^{\alpha}_{\beta}
+a^{\alpha\lambda\gamma\delta}\lbrac\gamma_{\gamma\delta}(\!u\e-\!u^I, w\e-w^I)\rbrac
\right]
\lbra v_{\alpha}\rbra_{n_{\lambda}}\right|\\
\le C
\sum_{e\in\E^0_h}
\left[h_e\int_e  \lbrac\rho_{\alpha\beta}(\!u\e-\!u^I, w\e-w^I)\rbrac^2+
\lbrac\gamma_{\alpha\beta}(\!u\e-\!u^I, w\e-w^I)\rbrac^2
 \right]^{1/2}
\left[
h^{-1}_e\int_e\lbra v\rbra^2
\right]^{1/2}\\
\le C
\left[\sum_{\tau\in\T_h}\left[\|w\e-w^I\|^2_{2,\tau}+\|\!u\e-\!u^I\|^2_{1,\tau}+h^2_\tau\left(|w\e-w^I|^2_{3,\tau}
+|\!u\e-\!u^I|^2_{2,\tau}\right)\right]
 \right]^{1/2}\\
\left[\sum_{e\in\E^0_h}h^{-1}_e\int_e\lbra v\rbra^2 \right]^{1/2}.
\end{multline}
Similar to \eqref{term-3}, we have
\begin{multline}\label{term-5}
\left|\int_{\t\E^0_h}
a^{\alpha\beta\lambda\gamma}\lbrac\rho_{\lambda\gamma}(\!v, z)\rbrac
\lbra\partial_\alpha w\e-\partial_\alpha w^I\rbra_{n_{\beta}}\right|\\
\le C
\left[\sum_{\tau\in\T_h}
\left(h^{-2}_\tau|w\e-w^I|^2_{1,\tau}+|w\e-w^I|^2_{2,\tau}\right)
\right]^{1/2}\|\!v, z\|_{\H_h}.
\end{multline}
Similar to \eqref{term-4}, we have
\begin{multline}
\left|\int_{\t\E^0_h}
a^{\alpha\beta\lambda\gamma}\lbrac\rho_{\lambda\gamma}(\!u\e-\!u^I, w\e-w^I)\rbrac
\lbra\partial_\alpha z\rbra_{n_{\beta}}\right|
\le C
\left[\sum_{e\in\E^0_h}h^{-1}_e\int_e\lbra \partial_\alpha z\rbra^2 \right]^{1/2}\\
\left[\sum_{\tau\in\T_h}\left[\|w\e-w^I\|^2_{2,\tau}+\|\!u\e-\!u^I\|^2_{1,\tau}+h^2_\tau\left(|w\e-w^I|^2_{3,\tau}
+|\!u\e-\!u^I|^2_{2,\tau}\right)\right]
 \right]^{1/2}.
\end{multline}
We also have
\begin{multline}
\left|\int_{\t\E^0_h}
a^{\alpha\beta\lambda\gamma}\lbrac\rho_{\lambda\gamma|\beta}(\!v, z)
\rbrac\lbra w\e-w^I\rbra_{n_{\alpha}}\right|\\
\le C
\left[\sum_{\tau\in\T_h}
\left(h^{-4}_\tau|w\e-w^I|^2_{0,\tau}+h^{-2}_\tau|w\e-w^I|^2_{1,\tau}\right)
\right]^{1/2}\|\!v, z\|_{\H_h}.
\end{multline}
\begin{multline}
\int_{\t\E^0_h}
a^{\alpha\beta\lambda\gamma}\lbrac\rho_{\lambda\gamma|\beta}(\!u\e-\!u^I, w\e-w^I)
\rbrac\lbra z\rbra_{n_{\alpha}}\\
\le C
\left[
\sum_{\tau\in\T_h}\left(
\sum_{k=0}^4h^{2k-4}_\tau|w\e-w^I|^2_{k, \tau}+\sum_{k=0}^3\sum_{\alpha=1,2}h^{2k-2}_\tau|u\e_{\alpha}-u^I_{\alpha}|^2_{k, \tau}\right)\right]^{1/2}
\left[\sum_{e\in\E^0_h}h^{-3}_e\int_e\lbra z\rbra^2 \right]^{1/2}.
\end{multline}
The  boundary terms  in \eqref{form_ub_a}
are estimated using the same tools.
We then treat the interior penalty terms in \eqref{form_a}. Let $e\in\E^0_h$ be shared by $\tau_1$ and $\tau_2$. We use the H\"older inequality to get
\begin{multline*}
\sum_{\alpha=1,2}h^{-1}_e\int_{e}\lbra u\e_{\alpha}-u^I_\alpha\rbra \lbra v_{\alpha}\rbra
\le
\left[  h^{-1}_e\int_{ e}\lbra \!u\e-\!u^I\rbra^2
\right]^{1/2}
\left[  h^{-1}_e\int_{e}\lbra \!v\rbra^2
\right]^{1/2}\\
\le C
\left[\int_{\tau_1\cup\tau_2}\left(h^{-2}_\tau|\!u\e-\!u^I|^2_{0,\tau_1\cup\tau_2}+|\!u\e-\!u^I|^2_{1,\tau_1\cup\tau_2}\right)
\right]^{1/2}
\left[  h^{-1}_e\int_{e}\lbra \!v\rbra^2
\right]^{1/2}.
\end{multline*}
Similar estimates can be established for 
\begin{equation*}
h^{-3}_e\int_{e}\lbra w\e-w^I\rbra \lbra z\rbra,\quad
\sum_{\alpha=1,2}h^{-1}_e\int_{\e}\lbra \partial_\alpha w\e-\partial_\alpha w^I\rbra \lbra \partial_\alpha z\rbra.
\end{equation*}
From these, we obtain
\begin{multline}
\left|\sum_{e\in\E^0_h}
\left(\sum_{\alpha=1,2}h^{-1}_e\int_{\t e}\lbra u\e_{\alpha}-u^I_\alpha\rbra \lbra v_{\alpha}\rbra+h^{-3}_e\int_{e}\lbra w\e-w^I\rbra \lbra z\rbra
+\sum_{\alpha=1,2}h^{-1}_e\int_{e}\lbra \partial_\alpha w\e-\partial_\alpha w^I\rbra \lbra \partial_\alpha z\rbra \right)\right|\\
\le C
\left[
\sum_{\tau\in\T_h}\left(
\sum_{k=0}^4h^{2k-4}_\tau|w\e-w^I|^2_{k, \tau}+\sum_{k=0}^3\sum_{\alpha=1,2}h^{2k-2}_\tau|u\e_{\alpha}-u^I_{\alpha}|^2_{k, \tau}\right)\right]^{1/2}
\|\!v, z\|_{\H_h}.
\end{multline}
The boundary penalty term in $a(\!u\e-\!u^I, w\e-w^I;\!v, z)$  is estimated in the same manner.
All these together proves the estimate  \eqref{a-error}.
\end{proof}

\begin{lem}\label{b2-error-lem}
There is a $C$ independent of $\T_h$ such that
\begin{multline}\label{b2-error}
\left|b(\M\e-\M^I; \!v,z)\right|\\
\le C
\left[\sum_{\tau\in\T_h}\sum_{\alpha,\beta=1,2}\left(|\M^{\eps\alpha\beta}-\M^{I \alpha\beta}|^2_{0, \tau}+h^2_{\tau}|\M^{\eps\alpha\beta}-\M^{I\alpha\beta}|^2_{1, \tau}\right)
\right]^{1/2}\|\!v, z\|_{\H_h}\ \forall\ (\!v, z)\in\H_h.
\end{multline}
\end{lem}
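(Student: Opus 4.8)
The plan is to split $b(\M\e-\M^I;\!v,z)$ according to the three groups of terms in its definition \eqref{form_b} — the element integral $\int_{\t\Omega_h}(\M^{\eps\alpha\beta}-\M^{I\alpha\beta})\gamma_{\alpha\beta}(\!v,z)$, the interior edge integral over $\t\E^0_h$, and the boundary edge integral over $\t\E^S_h\cup\t\E^D_h$ — and to bound each group separately. The argument is entirely parallel to, but simpler than, the proof of \eqref{a-condition1} and of Lemma~\ref{a-error-lem}; the main tools are the Cauchy--Schwarz inequality, the trace inequality \eqref{trace}, the shape regularity bound $h_e\simeq h_\tau$ for an edge $e$ of an element $\tau$, and the norm equivalence \eqref{Hh-ah-equiv}.

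For the element term I would apply Cauchy--Schwarz on each $\tau\in\T_h$ and then sum, using that, by the definition \eqref{K-metric} of the membrane strain and the definition \eqref{ah-norm}, $\sum_{\tau\in\T_h}\sum_{\alpha,\beta=1,2}\|\gamma_{\alpha\beta}(\!v,z)\|_{0,\tau}^2\le\|\!v,z\|_{a_h}^2\lesssim\|\!v,z\|_{\H_h}^2$ by \eqref{Hh-ah-equiv}; this contributes only the $|\M^{\eps\alpha\beta}-\M^{I\alpha\beta}|_{0,\tau}$ part of the asserted bound.

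For the interior edge term, on an edge $e\in\E^0_h$ shared by $\tau_1$ and $\tau_2$ I would use H\"older's inequality to get
\[
\Bigl|\int_{\t e}\lbrac\M^{\eps\alpha\beta}-\M^{I\alpha\beta}\rbrac\lbra v_{\alpha}\rbra_{n_{\beta}}\Bigr|
\le C\Bigl[h_e\int_e\sum_{\alpha,\beta=1,2}\lbrac\M^{\eps\alpha\beta}-\M^{I\alpha\beta}\rbrac^2\Bigr]^{1/2}\Bigl[h_e^{-1}\int_e\sum_{\alpha=1,2}\lbra v_{\alpha}\rbra^2\Bigr]^{1/2},
\]
then bound the first factor by the trace inequality \eqref{trace} applied on both $\tau_1$ and $\tau_2$, which (with $h_e\simeq h_{\tau_\delta}$) gives $h_e\int_e(\,\cdot\,)^2\lesssim\sum_\delta\bigl(|\,\cdot\,|_{0,\tau_\delta}^2+h_{\tau_\delta}^2|\,\cdot\,|_{1,\tau_\delta}^2\bigr)$, and recognize the second factor as one of the interior penalty terms in the norm $\|\!v,z\|_{\H_h}$ of \eqref{Hh-norm}. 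Summing over $e\in\E^0_h$ and applying Cauchy--Schwarz over the edges yields a bound of the stated form. The boundary edge term $\int_{\t e}(\M^{\eps\alpha\beta}-\M^{I\alpha\beta})n_{\beta}v_{\alpha}$ over $e\in\E^S_h\cup\E^D_h$ is handled in exactly the same way, with the single adjacent element in place of the pair $\tau_1,\tau_2$, and with $h_e^{-1}\int_e\sum_{\alpha}v_{\alpha}^2$ being precisely the boundary penalty in \eqref{Hh-norm} for clamped and simply supported edges; no control of $\!v$ on the free boundary is needed, since $\t\E^F_h$ does not enter $b$. Adding the three groups together with one final Cauchy--Schwarz completes the proof.

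There is no genuine obstacle here: the lemma is a routine continuity estimate. The only point requiring care is the bookkeeping of powers of $h$ — choosing the weight $h_e$ rather than $h_e^{-1}$ in the H\"older splitting of the edge integrals so that the trace inequality \eqref{trace} produces exactly the combination $|\M^{\eps\alpha\beta}-\M^{I\alpha\beta}|_{0,\tau}^2+h_\tau^2|\M^{\eps\alpha\beta}-\M^{I\alpha\beta}|_{1,\tau}^2$ appearing in \eqref{b2-error}, and keeping track of which penalty term in $\|\cdot\|_{\H_h}$ absorbs each $\!v$-factor.
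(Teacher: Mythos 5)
Your proposal is correct and follows essentially the same route as the paper's proof: split $b(\M\e-\M^I;\!v,z)$ by the three terms of \eqref{form_b}, use Cauchy--Schwarz together with \eqref{Hh-ah-equiv} for the element integral, and for each edge integral split with $h_e^{1/2}$ and $h_e^{-1/2}$ so that the trace inequality \eqref{trace} produces the $|\cdot|_{0,\tau}^2+h_\tau^2|\cdot|_{1,\tau}^2$ combination while the $h_e^{-1}\int_e(\cdot)^2$ factor is absorbed by the corresponding penalty term in $\|\!v,z\|_{\H_h}$. The observation that the free boundary does not enter $b$ is also consistent with the paper.
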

\begin{proof}
In view of the definition \eqref{form_b}, we have
\begin{multline*}
b(\M\e-\M^I; \!v,z)=\\
\int_{\t\Omega_h}(\M^{\eps\alpha\beta}-\M^{I\alpha\beta})\gamma_{\alpha\beta}(\!v, z)
-\int_{\t\E^0_h}\lbrac\M^{\eps\alpha\beta}-\M^{I\alpha\beta}\rbrac\lbra v_{\alpha}\rbra_{n_{\beta}}
 -\int_{\t\E^{S}_h\cup\t\E^D_h}
(\M^{\eps\alpha\beta}-\M^{I\alpha\beta}){n_{\beta}}v_{\alpha}.
\end{multline*}
We have the estimates that
\begin{equation*}
\left|\int_{\t\Omega_h}(\M^{\eps\alpha\beta}-\M^{I\alpha\beta})\gamma_{\alpha\beta}(\!v, z)\right|
\le C
\sum_{\alpha, \beta=1,2}|\M^{\eps\alpha\beta}-\M^{I\alpha\beta}|_{0, \Omega_h}\sum_{\alpha, \beta=1,2}|\gamma_{\alpha\beta}(\!v, z)|_{0, \Omega_h},
\end{equation*}
\begin{multline*}
\left|\int_{\t\E^0_h}\lbrac\M^{\eps\alpha\beta}-\M^{I\alpha\beta}\rbrac\lbra v_{\alpha}\rbra_{n_{\beta}}\right|\\
\le C
\sum_{e\in\E^0_h}\left[\sum_{\alpha, \beta=1,2}h_e|\M^{\eps\alpha\beta}-\M^{I \alpha\beta}|^2_{0, e}\right]^{1/2}
\left[h^{-1}_e|\lbra \!v\rbra|^2_{0,e}\right]^{1/2}\\
\le C
\left[\sum_{\tau\in\T_h}\sum_{\alpha,\beta=1,2}\left(|\M^{\eps\alpha\beta}-\M^{I \alpha\beta}|^2_{0, \tau}+h^2_{\tau}|\M^{\eps\alpha\beta}-\M^{I\alpha\beta}|^2_{1, \tau}\right)
\right]^{1/2}\left[\sum_{e\in\E^0_h}h^{-1}_e|\lbra \!v\rbra|^2_{0,e}\right]^{1/2},
\end{multline*}
and
\begin{multline*}
\left|\int_{\t\E^{S}_h\cup\t\E^D_h}
(\M^{\eps\alpha\beta}-\M^{I\alpha\beta}){n_{\beta}}v_{\alpha}\right|\\
\le C
\sum_{e\in\E^S_h\cup\E^D_h}\left[\sum_{\alpha, \beta=1,2}h_e|\M^{\eps\alpha\beta}-\M^{I \alpha\beta}|^2_{0, e}\right]^{1/2}
\left[h^{-1}_e|\!v|^2_{0,e}\right]^{1/2}\\
\le C
\left[\sum_{\tau\in\T_h}\sum_{\alpha,\beta=1,2}\left(|\M^{\eps\alpha\beta}-\M^{I \alpha\beta}|^2_{0, \tau}+h^2_{\tau}|\M^{\eps\alpha\beta}-\M^{I\alpha\beta}|^2_{1, \tau}\right)
\right]^{1/2}\left[\sum_{e\in\E^S_h\cup\E^D_h}h^{-1}_e|\!v|^2_{0,e}\right]^{1/2}.
\end{multline*}
Sum up, we get the estimate \eqref{b2-error}.
\end{proof}
In both the inequalities \eqref{a-error} and \eqref{b2-error}, we did not
impose any condition for the interpolations $\!u^I$, $w^I$, and $\M^I$, except that they
are finite element functions from the space \eqref{FE-space}. The next estimate is very different in that
the interpolation needs to be  particularly chosen to obtain a desirable bound for
\begin{multline*}
b(\N; \!u\e-\!u^I, w\e-w^I)=
\int_{\t\Omega_h}\N^{\alpha\beta}\gamma_{\alpha\beta}(\!u\e-\!u^I, w\e-w^I)\\
-\int_{\t\E^0_h}\lbrac\N^{\alpha\beta}\rbrac\lbra u\e_{\alpha}-u^I_\alpha\rbra_{n_{\beta}}
 -\int_{\t\E^{S}_h\cup\t\E^D_h}
\N^{\alpha\beta}{n_{\beta}}(u\e_{\alpha}-u^I_\alpha).
\end{multline*}

On a $\tau\in\T_h$, we define $w^I\in P^3(\tau)$ by
\begin{equation}\label{wI}
\int_{\t\tau}(w\e-w^I)p=\int_{\tau}(w\e-w^I)p\sqrt a=0\ \forall\ p\in P^3(\tau).
\end{equation}
If $\partial\tau\cap\E^F_h=\emptyset$, we define $u^I_\alpha\in P^2(\tau)$ by
\begin{equation}\label{uI-interior}
\int_{\t\tau}(u\e_\alpha-u^I_\alpha)p=0\ \forall\ p\in P^2(\tau).
\end{equation}
If $\partial\tau\cap\E^F_h$ has one edge $e$, we define $u^I_\alpha\in P^3_*(\tau)$ by
\begin{equation}\label{uI-edge1}
\int_{\t\tau}(u\e_\alpha-u^I_\alpha)p=0\ \forall\ p\in P^2(\tau), \quad
\int_e(u\e_\alpha-u^I_\alpha) p\sqrt a=0\ \forall\ p\in P^1(e).
\end{equation}
If $\partial\tau\cap\E^F_h$ has two edges $e_\beta$, we define $u^I_\alpha\in P^3(\tau)$ by
\begin{equation}\label{uI-edge2}
\int_{\t\tau}(u\e_\alpha-u^I_\alpha)p=0\ \forall\ p\in P^2(\tau), \quad
\int_{e_\beta}(u\e_\alpha-u^I_\alpha) p\sqrt a=0\ \forall\ p\in P^1(e_\beta).
\end{equation}
The unisolvences of \eqref{wI} and \eqref{uI-interior} are trivial. The unisolvence of \eqref{uI-edge1} is seen from the condition
\eqref{P3*}. To see the unisolvence of \eqref{uI-edge2}, one may use Appell's polynomial \cite{Braess}
to decompose
a cubic polynomial as the sum of a quadratic and an orthogonal complement, and to see that
the orthogonal complement is uniquely determined by its averages and first moments on two edges.
\begin{lem}\label{b3-error-lem}
With the  interpolations defined above, there is a constant $C$ independent of $\T_h$ such that
\begin{multline}\label{b3-error}
|b(\N; \!u\e-\!u^I,w\e-w^I)|\le C
\max_{\tau\in\T_h}\left[h^{3}_\tau\sum_{\alpha,\beta,\lambda=1,2}|\Gamma^{\lambda}_{\alpha\beta}|_{2,\infty,\tau}+
h^{5}_\tau\sum_{\alpha,\beta=1,2}|b_{\alpha\beta}|_{3,\infty,\tau}\right]\\
\|\N\|_{\V_h}
\left[\sum_{\tau\in\T_h}h^{-2}_{\tau}\left|u\e_{\alpha}-u^I_{\alpha}\right|^2_{0,\tau}+
h^{-4}_{\tau}\left|w\e-w^I\right|^2_{0,\tau}\right]^{1/2}\ \ \forall\ \N\in\V_h.
\end{multline}
\end{lem}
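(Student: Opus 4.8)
The plan is to integrate by parts element by element, absorb all the edge terms into the definition \eqref{form_b} of $b$ using the continuity of $\N$, and then exploit the moment conditions built into the interpolations \eqref{wI}--\eqref{uI-edge2}.

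First I would apply the surface Green's theorem (see the appendix) on each $\t\tau$. By the symmetry of $\N$ we have $\N^{\alpha\beta}\gamma_{\alpha\beta}(\!v, z)=\N^{\alpha\beta}v_{\alpha|\beta}-\N^{\alpha\beta}b_{\alpha\beta}z$, and applying the product rule to the contravariant vector $T^\beta=\N^{\alpha\beta}v_\alpha$ gives $\int_{\t\tau}\N^{\alpha\beta}v_{\alpha|\beta}=\int_{\partial\t\tau}\N^{\alpha\beta}v_\alpha n_\beta-\int_{\t\tau}\N^{\alpha\beta}|_\beta v_\alpha$. Summing over $\tau\in\T_h$ with $v=\!u\e-\!u^I$ and $z=w\e-w^I$, and using that $\N\in\V_h$ is continuous so that $\lbrac\N^{\alpha\beta}\rbrac=\N^{\alpha\beta}$ on $\E^0_h$, the interior-edge contributions and the $\E^S_h\cup\E^D_h$ contributions cancel exactly with the matching terms of \eqref{form_b}, leaving
\begin{multline*}
b(\N; \!u\e-\!u^I, w\e-w^I)=\int_{\t\E^F_h}\N^{\alpha\beta}n_\beta(u\e_\alpha-u^I_\alpha)\\
-\int_{\t\Omega_h}\N^{\alpha\beta}|_\beta(u\e_\alpha-u^I_\alpha)-\int_{\t\Omega_h}\N^{\alpha\beta}b_{\alpha\beta}(w\e-w^I).
\end{multline*}
I would then estimate the three terms separately.

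The free-boundary term vanishes identically, and this is the step I expect to be the crux. Comparing the flat divergence theorem $\int_\tau\partial_\alpha(\sqrt a\,T^\alpha)=\int_{\partial\tau}\sqrt a\,T^\alpha\nu_\alpha$ with the surface divergence theorem $\int_{\t\tau}T^\alpha|_\alpha=\int_{\partial\t\tau}T^\alpha n_\alpha$ forces $n_\alpha\,d\t s=\sqrt a\,\nu_\alpha\,ds$ along each straight edge, where $\nu_\alpha$ is the constant flat outward unit normal. Hence for $e\in\E^F_h$ with $e\subset\partial\tau$,
\[
\int_{\t e}\N^{\alpha\beta}n_\beta(u\e_\alpha-u^I_\alpha)=\int_e\bigl(\nu_\beta\N^{\alpha\beta}\bigr)\sqrt a\,(u\e_\alpha-u^I_\alpha),
\]
and $\nu_\beta\N^{\alpha\beta}$ restricted to $e$ is affine, i.e.\ lies in $P^1(e)$. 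By the edge moment conditions in \eqref{uI-edge1}--\eqref{uI-edge2} --- which are exactly what the enrichment $P^3_*(\tau)$ (resp.\ $P^3(\tau)$) on free-boundary elements is designed to make well defined --- this integral is zero. The subtle point is that this term must vanish outright, not merely be small, and that the $\sqrt a$-weighted edge degrees of freedom are precisely what make that happen.

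For the two bulk terms I would use only local polynomial approximation, Cauchy--Schwarz, and the interior orthogonalities $\int_{\t\tau}(u\e_\alpha-u^I_\alpha)p=0$ for $p\in P^2(\tau)$ and $\int_{\t\tau}(w\e-w^I)p=0$ for $p\in P^3(\tau)$. In $\int_{\t\tau}\N^{\alpha\beta}|_\beta(u\e_\alpha-u^I_\alpha)$ I expand $\N^{\alpha\beta}|_\beta=\partial_\beta\N^{\alpha\beta}+\Gamma^\alpha_{\beta\lambda}\N^{\lambda\beta}+\Gamma^\beta_{\beta\lambda}\N^{\alpha\lambda}$; the first summand is constant on $\tau$, hence in $P^2(\tau)$, and is annihilated by the orthogonality, while in the Christoffel summands I replace each $\Gamma$ by a degree-one local polynomial approximation $\Pi^1_\tau\Gamma$, so $(\Pi^1_\tau\Gamma)\N\in P^1(\tau)\cdot P^1(\tau)=P^2(\tau)$ is also annihilated and, using $\int_{\t\tau}|\cdot|\simeq\int_\tau|\cdot|$ and Cauchy--Schwarz, the remainder is bounded by $\|\Gamma-\Pi^1_\tau\Gamma\|_{0,\infty,\tau}\|\N\|_{0,\tau}|u\e_\alpha-u^I_\alpha|_{0,\tau}\lesssim h_\tau^2|\Gamma^\lambda_{\alpha\beta}|_{2,\infty,\tau}\|\N\|_{0,\tau}|u\e_\alpha-u^I_\alpha|_{0,\tau}$. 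Likewise, replacing $b_{\alpha\beta}$ by a degree-two approximation $\Pi^2_\tau b_{\alpha\beta}$ (so that $(\Pi^2_\tau b_{\alpha\beta})\N^{\alpha\beta}\in P^2(\tau)\cdot P^1(\tau)=P^3(\tau)$ is annihilated) bounds $\int_{\t\tau}\N^{\alpha\beta}b_{\alpha\beta}(w\e-w^I)$ by $\lesssim h_\tau^3|b_{\alpha\beta}|_{3,\infty,\tau}\|\N\|_{0,\tau}|w\e-w^I|_{0,\tau}$. Summing over $\tau$, writing the typical terms as $(h_\tau^3|\Gamma^\lambda_{\alpha\beta}|_{2,\infty,\tau})\,\|\N\|_{0,\tau}\,(h_\tau^{-1}|u\e_\alpha-u^I_\alpha|_{0,\tau})$ and $(h_\tau^5|b_{\alpha\beta}|_{3,\infty,\tau})\,\|\N\|_{0,\tau}\,(h_\tau^{-2}|w\e-w^I|_{0,\tau})$, applying the discrete Cauchy--Schwarz inequality, pulling out $\max_\tau\bigl(h^3_\tau|\Gamma^\lambda_{\alpha\beta}|_{2,\infty,\tau}+h^5_\tau|b_{\alpha\beta}|_{3,\infty,\tau}\bigr)$, and using $\sum_\tau\|\N\|^2_{0,\tau}=\|\N\|^2_{\V_h}$, then yields \eqref{b3-error}.
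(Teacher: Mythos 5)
Your proposal follows the same route as the paper: element-wise Green's theorem to convert $b$ into bulk terms plus a lone free-boundary edge term, exact cancellation of that edge term via the relation $n_\beta\,d\t s=\bar n_\beta\sqrt a\,ds$ together with the $\sqrt a$-weighted first-moment conditions of \eqref{uI-edge1}--\eqref{uI-edge2}, and then subtraction of local $P^2$/$P^3$ approximants of $\N^{\alpha\beta}|_\beta$ and $b_{\alpha\beta}\N^{\alpha\beta}$ (the paper writes these as weighted $L^2$ projections $P_2$, $P_3$) using the interior orthogonalities \eqref{wI}--\eqref{uI-edge2}, followed by Cauchy--Schwarz. The argument is correct and matches the paper's proof.
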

\begin{proof}
With an application of the Green's theorem \eqref{Green} on each element $\t\tau\in\t\T_h$, summing up, and using the fact that $\N$ is continuous
on $\E^0_h$, we obtain the following  alternative expression.
\begin{multline*}
b(\N; \!u\e-\!u^I,w\e-w^I)=
\int_{\t\Omega_h}\left[-\N^{\alpha\beta}|_{\beta}\left(u\e_{\alpha}-u^I_{\alpha}\right)-b_{\alpha\beta}\N^{\alpha\beta}
\left(w\e-w^I\right)\right]\\
+\int_{\t\E_h^0}\lbra\N^{\alpha\beta}\rbra_{n_{\beta}}\lbrac
\left(u\e_{\alpha}-u^I_{\alpha}\right)\rbrac+
\int_{\t\E_h^F}\N^{\alpha\beta}n_{\beta}
\left(u\e_{\alpha}-u^I_{\alpha}\right)
\\
=
\int_{\t\Omega_h}\left[
-\N^{\alpha\beta}|_{\beta}
\left(u\e_{\alpha}-u^I_{\alpha}\right)-b_{\alpha\beta}\N^{\alpha\beta}\left(w\e-w^I\right)\right]
+\int_{\t\E_h^F}\N^{\alpha\beta}n_{\beta}
\left(u\e_{\alpha}-u^I_{\alpha}\right).
\end{multline*}
For each $e\in\E^F_h$, we have, see \eqref{Green} in the appendix,
\begin{equation*}
\int_{\t e}\N^{\alpha\beta}n_{\beta}
\left(u\e_{\alpha}-u^I_{\alpha}\right)=\int_{e}\N^{\alpha\beta}\bar n_{\beta}
\left(u\e_{\alpha}-u^I_{\alpha}\right)\sqrt a=0.
\end{equation*}
We  thus have
\begin{multline}\label{b-P2P3}
b(\N; \!u\e-\!u^I,w\e-w^I)\\=
-\sum_{\tau\in\T_h}\int_{\t\tau}\left\{
\left[
\N^{\alpha\beta}|_{\beta}
-P_2(\N^{\alpha\beta}|_{\beta})
\right]
\left(u\e_{\alpha}-u^I_{\alpha}\right)+
\left[b_{\alpha\beta}\N^{\alpha\beta}-
P_3(b_{\alpha\beta}\N^{\alpha\beta})\right]
\left(w\e-w^I\right)\right\}.
\end{multline}
Here $P_2(\N^{\alpha\beta}|_{\beta})$ is the $L^2$ projection (weighted by $\sqrt a$) of $\N^{\alpha\beta}|_{\beta}$
into the space of quadratic polynomials on the element $\tau$, and $P_3(b_{\alpha\beta}\N^{\alpha\beta})$
is the weighted $L^2$ projection of  $b_{\alpha\beta}\N^{\alpha\beta}$ into the space of cubic polynomials on $\tau$.
On an element $\t\tau$, we have
\begin{multline}\label{b-on-tau}
\left|\int_{\t\tau}
\left[
\N^{\alpha\beta}|_{\beta}
-P_2(\N^{\alpha\beta}|_{\beta})
\right]
\left(u\e_{\alpha}-u^I_{\alpha}\right)\right|\\
\le C
\left[h_\tau\sum_{\alpha=1,2}\left|\N^{\alpha\beta}|_{\beta}
-P_2(\N^{\alpha\beta}|_{\beta})\right|_{0,\tau}\right]
h^{-1}_\tau|\!u\e-\!u^I|_{0,\tau}.
\end{multline}
In view of the formula
$ 
\N^{\alpha\beta}|_{\beta}=\partial_\beta\N^{\alpha\beta}+\Gamma^{\beta}_{\beta\gamma}\N^{\alpha\gamma}+
\Gamma^{\alpha}_{\delta\beta}\N^{\delta\beta}
$ 
we have 
\begin{equation*}
\left|\N^{\alpha\beta}|_{\beta}
-P_2(\N^{\alpha\beta}|_{\beta})\right|_{0,\tau}\le
\left|\Gamma^{\beta}_{\beta\gamma}-p_1(\Gamma^{\beta}_{\beta\gamma})\right|_{0,\infty,\tau}
\left|\N^{\alpha\gamma}\right|_{0,\tau}+
\left|\Gamma^{\alpha}_{\beta\gamma}-p_1(\Gamma^{\alpha}_{\beta\gamma})\right|_{0,\infty,\tau}
\left|\N^{\beta\gamma}\right|_{0,\tau}.
\end{equation*}
Here, $p_1(\Gamma^{\beta}_{\beta\gamma})$ is the best linear approximation to $\Gamma^{\beta}_{\beta\gamma}$
in the space $L^{\infty}(\tau)$ such that
\begin{equation*}
\left|\Gamma^{\alpha}_{\beta\gamma}-p_1(\Gamma^{\beta}_{\beta\gamma})\right|_{0,\infty,\tau}\le Ch^2_\tau \left|\Gamma^{\alpha}_{\beta\gamma}\right|_{2,\infty,\tau}.
\end{equation*}
Summing \eqref{b-on-tau} for all $\tau\in\T_h$, and using Cauchy--Schwarz inequality, we get
\begin{multline*}
\sum_{\tau\in\T_h}\int_{\t\tau}
\left|\left[
\N^{\alpha\beta}|_{\beta}
-P_2(\N^{\alpha\beta}|_{\beta})
\right]
\left(u\e_{\alpha}-u^I_{\alpha}\right)\right|\\
\le C
\left[\sum_{\tau\in\T_h}\left(h^6_\tau\sum_{\alpha,\beta,\gamma=1,2}
\left|\Gamma^{\alpha}_{\beta\gamma}\right|^2_{2,\infty,\tau}\sum_{\alpha,\beta=1,2}|\N^{\alpha\beta}|^2_{0,\tau}\right)
\right]^{1/2}
\left[\sum_{\tau\in\T_h}h^{-2}_\tau|\!u\e-\!u^I|^2_{0,\tau}
\right]^{1/2}\\
\le C
\max_{\tau\in\T_h}\left(h^{3}_\tau\sum_{\alpha,\beta,\lambda=1,2}|\Gamma^{\lambda}_{\alpha\beta}|_{2,\infty,\tau}\right)\|\N\|_{\V_h}
\left[\sum_{\tau\in\T_h}h^{-2}_\tau|\!u\e-\!u^I|^2_{0,\tau}
\right]^{1/2}.
\end{multline*}
Similarly,
\begin{multline*}
\sum_{\tau\in\T_h}\left|\int_{\t\tau}
\left[b_{\alpha\beta}\N^{\alpha\beta}-
P_3(b_{\alpha\beta}\N^{\alpha\beta})\right]
\left(w\e-w^I\right)\right|\\
\le C
\max_{\tau\in\T_h}\left(h^{5}_\tau\sum_{\alpha,\beta=1,2}|b_{\alpha\beta}|_{3,\infty,\tau}\right)\|\N\|_{\V_h}
\left[\sum_{\tau\in\T_h}h^{-4}_\tau|w\e-w^I|^2_{0,\tau}
\right]^{1/2}.
\end{multline*}
\end{proof}
It is trivial to see that
\begin{multline}\label{c-error}
\left|c(\M\e-\M^I, \N)\right|
\le C \|\N\|_{\V_h}
\sum_{\alpha,\beta=1,2}|\M^{\eps\alpha\beta}-\M^{I\alpha\beta}|_{0,\Omega_h}\ \forall\ \N\in\V_h.\hfill
\end{multline}

The following estimate is a result of combining  \eqref{a-error}, \eqref{b2-error}, \eqref{b3-error}, \eqref{c-error}, and \eqref {error-fraction}.
\begin{thm}\label{K-fem-lem}
Let $u^I_{\alpha}$ and $w^I$ be the approximations to $u\e_\alpha$ and $w\e$
in the finite element space \eqref{FE-space}, which is defined by the formulas \eqref{wI}, \eqref{uI-interior},
\eqref{uI-edge1}, and \eqref{uI-edge2}. Let
$\M^{I\alpha\beta}$ be an approximation
to $\M^{\eps\alpha\beta}$ from the space of continuous piecewise linear functions. There is a $C$
independent of $\T_h$ such that
\begin{multline}\label{K-fem-error-lem}
\|(\!u^h-\!u^I, w^h-w^I)\|_{\H_h}+
\|\M^h-\M^I\|_{\overline\V_h}+\eps\|\M^h-\M^I\|_{\V_h}
\\
\le C
\left[1+\eps^{-1}\max_{\tau\in\T_h}\left(h^{3}_\tau\sum_{\alpha,\beta,\lambda=1,2}|\Gamma^{\lambda}_{\alpha\beta}|_{2,\infty,\tau}+
h^{5}_\tau\sum_{\alpha,\beta=1,2}|b_{\alpha\beta}|_{3,\infty,\tau}\right)
\right]
\\
\left[
\sum_{\tau\in\T_h}\left(
\sum_{k=0}^4h^{2k-4}_\tau|w\e-w^I|^2_{k, \tau}+\sum_{k=0}^3\sum_{\alpha=1,2}h^{2k-2}_\tau|u\e_{\alpha}-u^I_{\alpha}|^2_{k, \tau}\right.\right.\\
\left.\left.+\sum_{k=0}^1\sum_{\alpha,\beta=1,2}h^{2k}_\tau|\M^{\eps\alpha\beta}-\M^{I\alpha\beta}|^2_{k, \tau}\right)\right]^{1/2}.
\end{multline}
\end{thm}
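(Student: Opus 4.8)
The plan is to read \eqref{K-fem-error-lem} directly off the error identity \eqref{error-fraction} once the four terms in its numerator have been estimated. Recall that \eqref{error-fraction}, obtained by combining the inf--sup bound of Theorem~\ref{isomorphism-thm} with the consistency of the finite element model with \eqref{K-P-model}, bounds the left-hand side of \eqref{K-fem-error-lem} by the supremum over $(\!v,z)\in\H_h$, $\N\in\V_h$ of the ratio $N/D$, where
\[
N=a(\!u\e-\!u^I, w\e-w^I;\!v, z)+b(\M\e-\M^I; \!v, z)-b(\N; \!u\e-\!u^I, w\e-w^I)+\eps^2 c(\M\e-\M^I, \N)
\]
and $D=\|(\!v,z)\|_{\H_h}+|\N|_{\overline\V_h}+\eps\|\N\|_{\V_h}$. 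Since $D$ dominates each of its three nonnegative summands, it is enough to bound each of the four pieces of $N$ by a best-approximation quantity times one of $\|(\!v,z)\|_{\H_h}$ and $\|\N\|_{\V_h}$, and then to divide by the matching summand of $D$.

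For the three pieces carrying no negative power of $\eps$ this is immediate. Lemma~\ref{a-error-lem} bounds the $a$-term by $C\|(\!v,z)\|_{\H_h}$ times the square root of $\sum_\tau(\sum_{k=0}^{4}h_\tau^{2k-4}|w\e-w^I|_{k,\tau}^2+\sum_{k=0}^{3}\sum_\alpha h_\tau^{2k-2}|u\e_\alpha-u^I_\alpha|_{k,\tau}^2)$; Lemma~\ref{b2-error-lem} bounds $b(\M\e-\M^I;\!v,z)$ by $C\|(\!v,z)\|_{\H_h}$ times the square root of $\sum_\tau\sum_{\alpha,\beta}\sum_{k=0}^{1}h_\tau^{2k}|\M^{\eps\alpha\beta}-\M^{I\alpha\beta}|_{k,\tau}^2$; and \eqref{c-error} gives $\eps^2|c(\M\e-\M^I,\N)|\le C\eps^2\|\N\|_{\V_h}\sum_{\alpha,\beta}|\M^{\eps\alpha\beta}-\M^{I\alpha\beta}|_{0,\Omega_h}$. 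Dividing the first two by $\|(\!v,z)\|_{\H_h}$ and the third by $\eps\|\N\|_{\V_h}$ (the surviving factor $\eps$ being harmless since $\eps\lesssim 1$) leaves in each case a quantity already contained in the bracketed square root on the right of \eqref{K-fem-error-lem}.

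The lone source of the prefactor is $b(\N;\!u\e-\!u^I,w\e-w^I)$. By Lemma~\ref{b3-error-lem} it is at most $C\,G\,\|\N\|_{\V_h}$ times the square root of $\sum_\tau(h_\tau^{-2}|u\e_\alpha-u^I_\alpha|_{0,\tau}^2+h_\tau^{-4}|w\e-w^I|_{0,\tau}^2)$, where $G=\max_{\tau\in\T_h}(h_\tau^{3}\sum_{\alpha,\beta,\lambda}|\Gamma^{\lambda}_{\alpha\beta}|_{2,\infty,\tau}+h_\tau^{5}\sum_{\alpha,\beta}|b_{\alpha\beta}|_{3,\infty,\tau})$. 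The only summand of $D$ controlling $\|\N\|_{\V_h}$ is $\eps\|\N\|_{\V_h}$ --- the weaker seminorm $|\N|_{\overline\V_h}$ does not --- so dividing by it produces the factor $\eps^{-1}G$ multiplying the square root of $\sum_\tau(h_\tau^{-2}|u\e_\alpha-u^I_\alpha|_{0,\tau}^2+h_\tau^{-4}|w\e-w^I|_{0,\tau}^2)$, which is precisely the $k=0$ part of the bracket in \eqref{a-error}, hence in \eqref{K-fem-error-lem}. Adding the four contributions and writing $E$ for the bracketed square root on the right of \eqref{K-fem-error-lem}, one obtains a bound of the form $C(E+\eps^{-1}GE')$ with $E'\le E$, that is $C(1+\eps^{-1}G)E$, which is \eqref{K-fem-error-lem}. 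The argument is essentially bookkeeping; the one point that needs attention --- and the only place the result could go wrong --- is pairing each numerator estimate with the right summand of $D$, so that the membrane-scaling factor $\eps^{-1}$ is attached to the $b(\N;\cdot,\cdot)$ term alone, together with the routine check that the individual right-hand factors of \eqref{a-error}, \eqref{b2-error}, \eqref{b3-error} and \eqref{c-error} are all absorbed into the single bracket of \eqref{K-fem-error-lem}.
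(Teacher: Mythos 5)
Your proposal is correct and follows the same route the paper sketches: combine the bound \eqref{error-fraction} coming from Theorem~\ref{isomorphism-thm} and consistency with the four numerator estimates \eqref{a-error}, \eqref{b2-error}, \eqref{b3-error}, \eqref{c-error}, pairing each with the appropriate summand of the denominator. The paper states the theorem as an immediate corollary of those lemmas without writing out the bookkeeping; you have simply made the bookkeeping explicit, and the key observation you flag --- that $b(\N;\cdot,\cdot)$ can only be paired with $\eps\|\N\|_{\V_h}$, which is the sole source of the $\eps^{-1}$ prefactor --- is exactly the point the paper's combination of lemmas relies on.
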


We have the following theorem on the error estimate for the finite element method introduced this paper.
\begin{thm}
If the Koiter model solution has the regularity that $u\e_\alpha\in H^3$ and $w\e\in H^4$, then
there is a constant $C$ that is independent of the triangulation $\T_h$ and the shell thickness $\eps$, such that
\begin{multline*}
\|(\!u\e-\!u^h, w\e-w^h)\|_{\H_h}\\
\le C
\left[1+\eps^{-1}\max_{\tau\in\T_h}\left(h^{3}_\tau\sum_{\alpha,\beta,\lambda=1,2}|\Gamma^{\lambda}_{\alpha\beta}|_{2,\infty,\tau}+
h^{5}_\tau\sum_{\alpha,\beta=1,2}|b_{\alpha\beta}|_{3,\infty,\tau}\right)
\right]\\
\left[
\sum_{\tau\in\T_h}h^4_{\tau}\left(\sum_{\alpha=1,2}\|u\e_\alpha\|^2_{3,\tau}+\|w\e\|^2_{4,\tau}+
\sum_{\alpha, \beta=1,2}\|\M^{\eps\alpha\beta}\|^2_{2,\tau}
\right)\right]^{1/2}.
\end{multline*}
Here $\!u^h$, $w^h$, and $\M^h$ is the solution of the
the finite element model \eqref{K-fem} with the finite element space defined by \eqref{FE-space}. The norm $\|\cdot\|_{\H_h}$
is defined by \eqref{Hh-norm}.
\end{thm}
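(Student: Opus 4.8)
The plan is to deduce the stated bound from the quasi-optimality estimate \eqref{K-fem-error-lem} of Theorem~\ref{K-fem-lem} together with element-wise interpolation error estimates for the interpolants $w^I$, $\!u^I$, $\M^I$. First I would apply the triangle inequality in the norm $\|\cdot\|_{\H_h}$,
\[
\|(\!u\e-\!u^h, w\e-w^h)\|_{\H_h}\le \|(\!u\e-\!u^I, w\e-w^I)\|_{\H_h}+\|(\!u^h-\!u^I, w^h-w^I)\|_{\H_h},
\]
and control the last term by \eqref{K-fem-error-lem}, discarding the nonnegative membrane contributions $\|\M^h-\M^I\|_{\overline\V_h}$ and $\eps\|\M^h-\M^I\|_{\V_h}$ on its left-hand side. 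It then remains to (i) bound the bracketed broken-Sobolev sum on the right of \eqref{K-fem-error-lem}, and (ii) bound the pure interpolation error $\|(\!u\e-\!u^I, w\e-w^I)\|_{\H_h}$; both reduce to local interpolation estimates. Throughout, $\M^{I\alpha\beta}$ is taken to be the nodal interpolant, which is well defined since $\M^{\eps\alpha\beta}\in H^2\hookrightarrow C^0(\overline\Omega)$ in two space dimensions, and which is continuous and piecewise linear, hence in $\V_h$.

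The technical core is to establish, with constants depending only on the shape regularity $\K$ (and on the smooth, uniformly positive weight $\sqrt a$), the local bounds
\[
|w\e-w^I|_{k,\tau}\lesssim h_\tau^{4-k}|w\e|_{4,\tau}\ \ (0\le k\le 4),\qquad
|u\e_\alpha-u^I_\alpha|_{k,\tau}\lesssim h_\tau^{3-k}|u\e_\alpha|_{3,\tau}\ \ (0\le k\le 3),
\]
together with $|\M^{\eps\alpha\beta}-\M^{I\alpha\beta}|_{k,\tau}\lesssim h_\tau^{2-k}|\M^{\eps\alpha\beta}|_{2,\tau}$ for $k=0,1$. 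For $w^I$ this is the standard Bramble--Hilbert estimate for the $\sqrt a$-weighted $L^2$-projection onto $P^3(\tau)$, obtained by scaling to a reference triangle, where the projection is a $P^3$-reproducing operator that is uniformly $H^k$-stable for $0\le k\le 3$; the $k=4$ case is trivial since $w^I$ is cubic. For $\!u^I$ on interior elements the weighted $L^2$-projection onto $P^2(\tau)$ is $P^2$-reproducing and the same argument applies. On an element with one or two free edges I would again pass to the reference triangle: the unisolvence proved via \eqref{P3*} (and, for two free edges, via Appell's polynomials) shows that the operator defined by \eqref{uI-edge1} resp.\ \eqref{uI-edge2} is a bounded projection onto $P^3_*(\hat\tau)$ resp.\ $P^3(\hat\tau)$ whose norm depends only on $\K$, and since it reproduces $P^2$, Bramble--Hilbert on $\hat\tau$ followed by scaling gives the claimed rates for $0\le k\le 2$; the top-order case $k=3$ then follows from the $k=0$ case and an inverse inequality, writing $|u^I_\alpha|_{3,\tau}=|u^I_\alpha-q|_{3,\tau}\lesssim h_\tau^{-3}\|u^I_\alpha-q\|_{0,\tau}$ for the best $P^2$-approximation $q$ of $u\e_\alpha$. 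The $\M$-estimate is the standard local Bramble--Hilbert bound for nodal interpolation.

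Substituting these bounds into the bracketed sum of \eqref{K-fem-error-lem} makes each summand $h_\tau^{2k-4}|w\e-w^I|^2_{k,\tau}$, $h_\tau^{2k-2}|u\e_\alpha-u^I_\alpha|^2_{k,\tau}$, $h_\tau^{2k}|\M^{\eps\alpha\beta}-\M^{I\alpha\beta}|^2_{k,\tau}$ equal to $h_\tau^4$ times a local seminorm of $w\e$, $u\e_\alpha$, or $\M^{\eps\alpha\beta}$, so that bracket is dominated by $\bigl[\sum_{\tau\in\T_h}h_\tau^4\bigl(\sum_\alpha|u\e_\alpha|^2_{3,\tau}+|w\e|^2_{4,\tau}+\sum_{\alpha,\beta}|\M^{\eps\alpha\beta}|^2_{2,\tau}\bigr)\bigr]^{1/2}$. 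For the remaining term $\|(\!u\e-\!u^I, w\e-w^I)\|_{\H_h}$ I would expand the norm \eqref{Hh-norm}: the broken $H^1$ and $H^2$ volume parts are controlled directly by the local bounds; for an interior edge $e$, since $\!u\e\in\!H^1$ and $w\e\in H^2$ are single-valued across $e$, each of $\lbra u\e_\alpha-u^I_\alpha\rbra$, $\lbra\partial_\alpha(w\e-w^I)\rbra$, $\lbra w\e-w^I\rbra$ is the difference of two one-sided traces of the interpolation error, so the trace inequality \eqref{trace} plus the local bounds give $h_e^{-1}\int_e\lbra u\e_\alpha-u^I_\alpha\rbra^2\lesssim \sum_\tau h_\tau^4|u\e_\alpha|^2_{3,\tau}$, and similarly for the other two; on edges of $\E^S_h\cup\E^D_h$ (resp.\ $\E^D_h$) the boundary conditions $u\e_\alpha=w\e=0$ (resp.\ $D_{\!n}w\e=0$) likewise turn the penalties into traces of the interpolation error, again controlled the same way. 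Hence $\|(\!u\e-\!u^I, w\e-w^I)\|_{\H_h}^2\lesssim \sum_{\tau}h_\tau^4(\cdots)$, which is absorbed because the first bracket of \eqref{K-fem-error-lem} is bounded below by $1$. Adding the two contributions and replacing seminorms by full $H^k(\tau)$-norms yields the asserted estimate.

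I expect the main obstacle to be the interpolation estimates on the free-boundary elements: verifying that the enriched operators \eqref{uI-edge1}, \eqref{uI-edge2} are $P^2$-reproducing, $H^3(\tau)$-bounded projections with constants independent of the element shape, which requires care with the scaling behaviour of the edge-moment functionals on $P^3_*(\tau)$ and with the metric weight $\sqrt a$. Once those scaled stability bounds are in hand, the rest is bookkeeping of powers of $h_\tau$ and applications of \eqref{trace}.
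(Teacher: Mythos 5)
Your proposal is correct and follows essentially the same route as the paper: triangle inequality, Theorem~\ref{K-fem-lem}, and elementwise interpolation estimates obtained by passing to a reference triangle and combining the stability of the weighted projections \eqref{wI}--\eqref{uI-edge2} (in $L^2(\hat\tau)$, or from $H^1(\hat\tau)$ to $L^2(\hat\tau)$ on free-edge elements) with an inverse inequality and a polynomial-preserving operator for Bramble--Hilbert. The one genuine variation is your choice of $\M^I$ as the nodal interpolant, which is available here because the assumed piecewise regularity makes $\M^{\eps\alpha\beta}\in H^2(\Omega)\hookrightarrow C^0(\overline\Omega)$; the paper instead takes the global $L^2$-projection onto $\V_h$ and then introduces an auxiliary local projection $\M^{I\alpha\beta}_\tau$ to control the $H^1(\tau)$-seminorm via an inverse inequality. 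Your choice is actually the cleaner one: any continuous piecewise-linear quasi-interpolant (nodal, Cl\'ement, or Scott--Zhang) satisfying $|\M^{\eps\alpha\beta}-\M^{I\alpha\beta}|_{k,\tau}\lesssim h_\tau^{2-k}|\M^{\eps\alpha\beta}|_{2,\tau}$ for $k=0,1$ suffices for Theorem~\ref{K-fem-lem}, whereas the paper's route requires care, since the best-approximation property of the global $L^2$-projection is with respect to the \emph{continuous} piecewise-linear space while $\bigoplus_\tau\M^{I\alpha\beta}_\tau$ is generally discontinuous, so the intermediate comparison $\sum_\tau|\M^{\eps\alpha\beta}-\M^{I\alpha\beta}|_{0,\tau}^2\le\sum_\tau|\M^{\eps\alpha\beta}-\M^{I\alpha\beta}_\tau|_{0,\tau}^2$ runs in the wrong direction unless $\M^{I\alpha\beta}_\tau$ is taken to be the restriction of a globally continuous quasi-interpolant. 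Everything else in your sketch — the handling of the weight $\sqrt a$, the unisolvence and $H^1\to L^2$ stability of the enriched free-edge interpolants, the use of $P^2$-reproduction plus an inverse inequality for the top-order seminorm, and the reduction of the edge penalty terms in $\|(\!u\e-\!u^I,w\e-w^I)\|_{\H_h}$ to volume terms via \eqref{trace} using single-valuedness of the exact solution and the essential boundary conditions — matches the paper's argument.
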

\begin{proof}
In view of the triangle inequality, we have
\begin{equation*}
\|(\!u\e-\!u^h, w\e-w^h)\|_{\H_h}\le \|(\!u\e-\!u^I, w\e-w^I)\|_{\H_h}+\|(\!u^h-\!u^I, w^h-w^I)\|_{\H_h}.
\end{equation*}
Using the trace inequality \eqref{trace} to the edge terms in the norm  $\|(\!u\e-\!u^I, w\e-w^I)\|_{\H_h}$, cf., \eqref{Hh-norm}, we get
\begin{equation*}
\|(\!u\e-\!u^I, w\e-w^I)\|_{\H_h}\le C
\left[
\sum_{\tau\in\T_h}\left(
\sum_{k=0}^2h^{2k-4}_\tau|w\e-w^I|^2_{k, \tau}+\sum_{k=0}^1\sum_{\alpha=1,2}h^{2k-2}_\tau|u\e_{\alpha}-u^I_{\alpha}|^2_{k, \tau}
\right)\right]^{1/2}.
\end{equation*}
For each $\tau\in\T_h$, we need to establish that
\begin{equation}\label{scaled-estimate}
\begin{gathered}
\sum_{k=0}^4h^{2k-4}_\tau|w\e-w^I|^2_{k, \tau}\le C h^4_\tau|w\e|^2_{4,\tau},\\
\sum_{k=0}^3\sum_{\alpha=1,2}h^{2k-2}_\tau|u\e_{\alpha}-u^I_{\alpha}|^2_{k, \tau}\le C h^4_\tau|\!u\e|_{3,\tau}.
\end{gathered}
\end{equation}

We scale $\tau$ to a similar triangle $\T$ whose diameter is $1$ by the scaling $X_\alpha=h^{-1}_\tau x_\alpha$. Let $W(X_\alpha)=w(x_\alpha)$,
$U_\beta(X_\alpha)=u_\beta(x_\alpha)$
$A(X_\alpha)=a(x_\alpha)$, $W^I(X_\alpha)=w^I(x_\alpha)$, and $U^I_\beta(X_\alpha)=u^I_\beta(x_\alpha)$. It is easy to see that $W^I$ is the projection
of $W$ into $P^3(\T)$ in the space $L^2(\T)$ weighted by $\sqrt{A(X_\alpha)}$.
This projection preserves cubic polynomials and we have the bound that
\begin{equation}\label{WI-bound}
\|W^I\|_{0,T}\le \left[\frac{\max_{\tau}a}{\min_{\tau}a}\right]^{1/4}\|W\|_{0,\T}\ \ \forall\ W\in L^2(\T).
\end{equation}
For a $W\in H^4(\T)$ and any cubic polynomial $p$, using inverse inequality, there is a $C$ depending on the shape regularity of $\T$
such that
\begin{equation*}
\|W-W^I\|_{4, \T}\le \|W-p\|_{4, \T}+\|(W-p)^I\|_{4, \T}\le \|W-p\|_{4, \T}+C\|(W-p)^I\|_{0, \T}.
\end{equation*}
Therefore, there is a $C$ depending on the shape regularity of $\T$ and the  ratio ${\max_{\tau}a}/{\min_{\tau}a}$ such that
\begin{equation*}
\|W-W^I\|_{4, \T}\le C\|W-p\|_{4, \T}\ \ \forall\ p\in P^3(\T).
\end{equation*}
Using the interpolation operator of \cite{Verfurth}, we can choose a $p\in P^3(\T)$ and an absolute constant such that
 \begin{equation*}
\|W-W^I\|_{4, \T}\le C\|W-p\|_{4, \T}\le C |W|_{4, \T}.
\end{equation*}
Scale this estimate from $\T$ to $\tau$, we obtain the first estimate in \eqref{scaled-estimate}.

If $\tau$ has no edge on the free boundary $\E^F_h$, the second inequality in \eqref{scaled-estimate} is proved
in the same way except that the $P^3(\T)$ is replaced by $P^2(\T)$.
If $\tau$ has one or two edges on the free boundary, in place of the estimate
\eqref{WI-bound}, we have that there is a $C$ depending only on the shape regularity of $\T$ such that
\begin{equation*} 
\|U^I_\alpha\|_{0,T}\le C \|U_\alpha\|_{1,\T}\ \ \forall\ U_\alpha \in H^1(\T).
\end{equation*}
For any $p\in P^2(\T)$, we have
\begin{multline*}
\|U_\alpha-U^I_\alpha\|_{3, \T}\le \|U_\alpha-p\|_{3, \T}+\|(U_\alpha-p)^I\|_{3, \T}\\
\le \|U_\alpha-p\|_{3, \T}+C\|(U_\alpha-p)^I\|_{0, \T}
\le \|U_\alpha-p\|_{3, \T}+C\|U_\alpha-p\|_{1, \T}
\le C\|U_\alpha-p\|_{3, \T}.
\end{multline*}
Using the interpolation operator of \cite{Verfurth} again, we get
\begin{equation*}
\|U_\alpha-U^I_\alpha\|_{3, \T}\le C|U_\alpha|_{3,\T}.
\end{equation*}
Here $C$ only depends on the shape regularity of $\T$.
The second inequality in \eqref{scaled-estimate} then follows the scaling from $\T$ to $\tau$.

Finally, we need to show that there is an interpolation $M^{I\alpha\beta}$ from continuous piecewise linear functions for $\M^{\eps\alpha\beta}$
such that
\begin{equation*}
\sum_{\tau\in\T_h}\left(|\M^{\eps\alpha\beta}-\M^{I\alpha\beta}|^2_{0, \tau}+h^2_{\tau}|\M^{\eps\alpha\beta}-\M^{I\alpha\beta}|^2_{1, \tau}\right)
\le C
\sum_{\tau\in\T_h}h^4_{\tau}\|\M^{\eps\alpha\beta}\|^2_{2,\tau}.
\end{equation*}
We choose the interpolation $\M^{I\alpha\beta}$ as the $L^2$ projection in the linear continuous finite element space of $\M^{\eps\alpha\beta}$.
On a $\tau\in\T_h$, we let $\M^{I\alpha\beta}_{\tau}$ be a local $L^2$ projection (or interpolation)
of $\M^{\eps\alpha\beta}$. We then have
\begin{multline*}
h^2_{\tau}|\M^{I\alpha\beta}-\M^{\eps\alpha\beta}|^2_{1, \tau}
\le
h^2_{\tau}|\M^{I\alpha\beta}-\M^{I\alpha\beta}_{\tau}|^2_{1, \tau}+h^2_{\tau}|\M^{I\alpha\beta}_{\tau}-\M^{\eps\alpha\beta}|^2_{1, \tau}\\
\le
|\M^{I\alpha\beta}-\M^{I\alpha\beta}_{\tau}|^2_{0, \tau}+h^2_{\tau}|\M^{I\alpha\beta}_{\tau}-\M^{\eps\alpha\beta}|^2_{1, \tau}\\
\le
|\M^{I\alpha\beta}-\M^{\eps\alpha\beta}|^2_{0, \tau}+
|\M^{\eps\alpha\beta}-\M^{I\alpha\beta}_{\tau}|^2_{0, \tau}
+h^2_{\tau}|\M^{I\alpha\beta}_{\tau}-\M^{\eps\alpha\beta}|^2_{1, \tau}.
\end{multline*}
Here, we used the inverse inequality for finite element functions, which is valid since our triangulation is shape regular.
We thus proved that
\begin{multline*}
\sum_{\tau\in\T_h}
\left[|\M^{\eps\alpha\beta}-\M^{I\alpha\beta}|^2_{0, \tau}+h^2_{\tau}|\M^{\eps\alpha\beta}-\M^{I\alpha\beta}|^2_{1, \tau}\right]\\
\le
\sum_{\tau\in\T_h}|\M^{\eps\alpha\beta}-\M^{I\alpha\beta}|^2_{0, \tau}
+
\sum_{\tau\in\T_h}|\M^{\eps\alpha\beta}-\M^{I\alpha\beta}_{\tau}|^2_{0, \tau}
+
\sum_{\tau\in\T_h}
h^2_{\tau}|\M^{\eps\alpha\beta}-\M^{I\alpha\beta}_{\tau}|^2_{1, \tau}\\
\le
\sum_{\tau\in\T_h}
\left[|\M^{\eps\alpha\beta}-\M^{I\alpha\beta}_{\tau}|^2_{0, \tau}+h^2_{\tau}|\M^{\eps\alpha\beta}-\M^{I\alpha\beta}_{\tau}|^2_{1, \tau}\right]
\le C
\sum_{\tau\in\T_h}h^4_{\tau}\|\M^{\eps\alpha\beta}\|^2_{2,\tau}.
\end{multline*}
The desired estimate then follows from Theorem~\ref{K-fem-lem}.
\end{proof}


\bibliographystyle{plain}

\appendix
\section*{Appendix: Consistency of the finite element model}
\renewcommand{\theequation}{A.\arabic{equation}}
\setcounter{equation}{0}
We verify that the solution $\!u\e, w\e, \M\e$ of the Koiter model \eqref{K-P-model} satisfies
the equation of the finite element model \eqref{K-fem} in which the test function $\!v, z, \N$ can be any piecewise
functions of sufficient regularity, not necessarily polynomials.
For this purpose,
we need to repeatedly do integration by parts on the shell mid-surface, by using the Green's theorem on surfaces.
Let $\tau\subset\Omega$ be a subdomain, which is mapped to
the subregion $\tilde\tau\subset\t\Omega$ by $\!phi$.
Let $\!n=n_{\alpha}\!a^{\alpha}=n^\alpha\!a_\alpha$
be the unit outward normal to the boundary $\partial\tilde\tau=\!phi(\partial\tau)$
which is tangent to the surface $\t\Omega$.
Let $\bar n_\alpha\!e^\alpha$ be the  unit outward normal vector to $\partial\tau$ in $\R^2$. Here $\!e^\alpha$ is the basis vector in $\R^2$.
The Green's theorem says that for a vector field $f^\alpha$,
\begin{equation}\label{Green}
\int_{\tilde\tau}f^{\alpha}|_{\alpha}=
\int_{\partial\tilde\tau}f^{\alpha}n_{\alpha}=
\int_{\partial\tau}f^{\alpha}\bar n_{\alpha}\sqrt a.
\end{equation}

Under the assumption that  the shell material has constant
Lam\'e coefficients, we have $a^{\alpha\beta\gamma\delta}|_{\tau}=a_{\alpha\beta\gamma\delta}|_{\tau}=0$.
This is due to the fact that $a^{\alpha\beta}|_{\gamma}=a_{\alpha\beta}|_{\gamma}=0$.
On a region  $\t\tau\subset\t\Omega$, for any vectors $u_\alpha$ and $v_\alpha$, scalars $w$ and $z$, and symmetric tensor $\M^{\alpha\beta}$,
the following identities follow from the Green's theorem and the definitions of change of curvature \eqref{K-curvature} and
change of metric \eqref{K-metric} tensors.
\begin{multline}\label{id-1}
\int_{\t\tau}
a^{\alpha\beta\lambda\gamma}\rho_{\lambda\gamma}(\!u, w)(\partial_\alpha z)|_{\beta}=
\int_{\t\tau}
a^{\alpha\beta\lambda\gamma}\rho_{\lambda\gamma|\alpha\beta}(\!u, w)
z\\+
\int_{\partial\t\tau}
a^{\alpha\beta\lambda\gamma}\rho_{\lambda\gamma}(\!u, w){n_{\beta}}
\partial_\alpha z-
\int_{\partial\t\tau}a^{\alpha\beta\lambda\gamma}\rho_{\lambda\gamma|\beta}(\!u, w)n_{\alpha}z.
\end{multline}
\begin{multline*}
\int_{\t\tau}
a^{\gamma\beta\sigma\tau}\rho_{\sigma\tau}(\!u, w)
(b^{\alpha}_{\beta}v_{\alpha})|_{\gamma}=
-\int_{\t\tau}
a^{\gamma\beta\sigma\tau}\rho_{\sigma\tau|\gamma}(\!u, w)
b^{\alpha}_{\beta}v_{\alpha}+
\int_{\partial\t\tau}a^{\gamma\beta\sigma\tau}\rho_{\sigma\tau}(\!u, w)b^{\alpha}_{\beta}
v_{\alpha}n_{\gamma}.\hfill
\end{multline*}
\begin{multline*}
\int_{\t\tau}
a^{\gamma\beta\sigma\tau}\rho_{\sigma\tau}(\!u, w)
b^{\alpha}_{\gamma}v_{\alpha|\beta}
=
-\int_{\t\tau}
a^{\gamma\beta\sigma\tau}[\rho_{\sigma\tau}(\!u, w)
b^{\alpha}_{\gamma}]|_{\beta}v_{\alpha}
+\int_{\partial\t\tau}a^{\gamma\beta\sigma\tau}\rho_{\sigma\tau}(\!u, w){n_{\beta}}
b^{\alpha}_{\gamma}v_{\alpha}.\hfill
\end{multline*}
\begin{multline*}
\int_{\t\tau}a^{\alpha\beta\gamma\delta}\gamma_{\gamma\delta}(\!u, w)\frac12(v_{\alpha|\beta}+v_{\beta|\alpha})=
-\int_{\t\tau}a^{\alpha\beta\gamma\delta}\gamma_{\gamma\delta|\beta}(\!u, w)v_{\alpha}
+\int_{\partial\t\tau}a^{\alpha\beta\gamma\delta}\gamma_{\gamma\delta}(\!u, w){n_{\beta}}v_{\alpha}.\hfill
\end{multline*}
\begin{multline*}
\int_{\t\tau}\M^{\alpha\beta}\frac12(v_{\alpha|\beta}+v_{\beta|\alpha})=
-\int_{\t\tau}\M^{\alpha\beta}|_{\beta}v_{\alpha}
+\int_{\partial\t\tau}\M^{\alpha\beta}{n_{\beta}}v_{\alpha}.\hfill
\end{multline*}

On the boundary $\partial\t\tau$, in addition to the normal vector $\!n$,
we introduce the  counterclockwise unit tangent vector $\!s=s^{\alpha}\!a_{\alpha}=s_\alpha\!a^\alpha$. Then the second integrand in the right hand side
of the above first equation can be further written as
\begin{multline*}
a^{\alpha\beta\lambda\gamma}\rho_{\lambda\gamma}(\!u, w){n_{\beta}}
\partial_\alpha z=
a^{\alpha\beta\lambda\gamma}\rho_{\lambda\gamma}(\!u, w){n_{\beta}}n_\alpha
n^\delta\partial_\delta z+
a^{\alpha\beta\lambda\gamma}\rho_{\lambda\gamma}(\!u, w){n_{\beta}}s_\alpha
s^\delta\partial_\delta z\\
=
a^{\alpha\beta\lambda\gamma}\rho_{\lambda\gamma}(\!u, w){n_{\beta}}n_\alpha
D_{\!n}z+
a^{\alpha\beta\lambda\gamma}\rho_{\lambda\gamma}(\!u, w){n_{\beta}}s_\alpha
D_{\!s}z.
\end{multline*}
Here $z$ is viewed as a function defined on the surface $\t\Omega$, and the invariant
$D_{\!s}z=s^\alpha\partial_\alpha z$ is the directional derivative in the direction of $\!s$, with respect to arc length.
The invariant $D_{\!n}z=n^\alpha\partial_\alpha z$ is the directional derivative of $z$ in the normal direction, with respect to arc length.

Using the Green's theorem on $\t\Omega$ several times,
we  write the Koiter model \eqref{K-P-model} in the following mixed strong form.
\begin{equation}\label{K-P-strong}
\begin{gathered}
\hfill -\frac13\left\{a^{\gamma\beta\sigma\tau}\rho_{\sigma\tau|\gamma}(\!u, w)
b^{\alpha}_{\beta}+
a^{\gamma\beta\sigma\tau}[\rho_{\sigma\tau}(\!u, w)
b^{\alpha}_\gamma]|_{\beta}
+a^{\alpha\beta\gamma\delta}\gamma_{\gamma\delta|\beta}(\!u, w)\right\}
-
\M^{\alpha\beta}|_{\beta}=p^{\alpha}\text{ in }\Omega,\\
\hfill \frac13\left\{a^{\alpha\beta\lambda\gamma}\rho_{\lambda\gamma|\alpha\beta}(\!u, w)
-c_{\alpha\beta}
a^{\alpha\beta\lambda\gamma}\rho_{\lambda\gamma}(\!u, w)
-b_{\alpha\beta}a^{\alpha\beta\lambda\gamma}\gamma_{\lambda\gamma}(\!u, w)\right\}
-b_{\alpha\beta}\M^{\alpha\beta}=p^3 \text{ in }\Omega, \\
\hfill \gamma_{\alpha\beta}(\!u, w)
-\eps^2a_{\alpha\beta\lambda\gamma}\M^{\lambda\gamma}=0 \text{ in }\Omega,
\end{gathered}
\end{equation}
\begin{equation*}
\begin{gathered}
\hfill  2\frac13a^{\gamma\beta\sigma\tau}\rho_{\sigma\tau}(\!u, w)b^{\alpha}_{\beta}
{n_{\gamma}}
+\frac13a^{\alpha\beta\gamma\delta}\gamma_{\gamma\delta}(\!u, w){n_{\beta}}
+\M^{\alpha\beta}{n_{\beta}}=q^\alpha\text{ on }    \partial^{F}\Omega,\\
\hfill
-\frac13a^{\alpha\beta\lambda\gamma}\rho_{\lambda\gamma|\beta}(\!u, w)
{n_{\alpha}}-\frac13D_{\!s}\left[a^{\alpha\beta\lambda\gamma}\rho_{\lambda\gamma}(\!u, w)
n_{\beta}s_\alpha\right]=q^3\text{ on }    \partial^{F}\Omega,
\\
\hfill  \frac13a^{\alpha\beta\lambda\gamma}\rho_{\lambda\gamma}(\!u, w)
n_{\beta}n_\alpha =m\text{ on }    \partial^{F\cup S}\Omega,\\
\hfill w=u_{\alpha}=0 \text{ on }\partial^{S\cup D}\Omega,
\\
\hfill D_{\!n}w=0\text{ on }\partial^{D}\Omega.
\end{gathered}
\end{equation*}

For any piecewise vectors $u_\alpha$ and $v_\alpha$, scalars $w$ and $z$, and symmetric tensors
$\M^{\alpha\beta}$ and $\N^{\alpha\beta}$, on $\Omega_h$, summing up the bilinear forms
defined by \eqref{form_ub_a}, \eqref{form_b}, and \eqref{form_c}, we have

\begin{multline*}
\ub a(\!u, w;\!v, z)+b(\M; \!v, z)-b(\N; \!u, w)+\eps^2c(\M, \N)\hfill \\
=
\frac13\int_{\t\Omega_h}\left[a^{\alpha\beta\lambda\gamma}
\rho_{\lambda\gamma}(\!u, w)\rho_{\alpha\beta}(\!v, z)
+a^{\alpha\beta\lambda\gamma}
\gamma_{\lambda\gamma}(\!u, w)\gamma_{\alpha\beta}(\!v, z)\right]+\int_{\t\Omega_h}\M^{\alpha\beta}\gamma_{\alpha\beta}(\!v, z)
\\
\hfill
-\int_{\t\Omega_h}\N^{\alpha\beta}\gamma_{\alpha\beta}(\!u, w)+\eps^2\int_{\t\Omega_h}a^{\alpha\beta\gamma\delta}\M_{\gamma\delta}\N_{\alpha\beta}
\end{multline*}
\begin{multline*}
-\frac23\int_{\t\E^0_h}a^{\gamma\beta\sigma\tau}\lbrac\rho_{\sigma\tau}(\!v, z)\rbrac b^{\alpha}_{\beta}
\lbra u_{\alpha}\rbra_{n_{\gamma}}
-\frac23\int_{\t\E^0_h}a^{\gamma\beta\sigma\tau}\lbrac\rho_{\sigma\tau}(\!u, w)\rbrac b^{\alpha}_{\beta}
\lbra v_{\alpha}\rbra_{n_{\gamma}}\\
-\frac13\int_{\t\E^0_h}
a^{\alpha\beta\lambda\gamma}\lbrac\rho_{\lambda\gamma}(\!v, z)\rbrac
\lbra \partial_\alpha w\rbra_{n_{\beta}}
-\frac13\int_{\t\E^0_h}
a^{\alpha\beta\lambda\gamma}\lbrac\rho_{\lambda\gamma}(\!u, w)\rbrac
\lbra \partial_\alpha z\rbra_{n_{\beta}}\\
+
\frac13\int_{\t\E^0_h}
\lbrac a^{\alpha\beta\lambda\gamma}\rho_{\lambda\gamma|\beta}(\!v, z)
\rbrac\lbra w\rbra_{n_{\alpha}}
+
\frac13\int_{\t\E^0_h}
\lbrac a^{\alpha\beta\lambda\gamma}\rho_{\lambda\gamma|\beta}(\!u, w)
\rbrac\lbra z\rbra_{n_{\alpha}}\\
-\frac13\int_{\t\E^0_h}a^{\alpha\beta\gamma\delta}\lbrac\gamma_{\gamma\delta}(\!v, z)\rbrac\lbra u_{\alpha}\rbra_{n_{\beta}}
-\frac13\int_{\t\E^0_h}a^{\alpha\beta\gamma\delta}\lbrac\gamma_{\gamma\delta}(\!u, w)\rbrac\lbra v_{\alpha}\rbra_{n_{\beta}}\\
+\int_{\t\E^0_h}\lbrac\N^{\alpha\beta}\rbrac\lbra u_{\alpha}\rbra_{n_{\beta}}
-\int_{\t\E^0_h}\lbrac\M^{\alpha\beta}\rbrac\lbra v_{\alpha}\rbra_{n_{\beta}}
\end{multline*}
\begin{multline*}
-\int_{\t\E^S_h}\left[\frac23a^{\gamma\beta\sigma\tau}\rho_{\sigma\tau}(\!u, w)b^{\alpha}_{\beta}
{n_{\gamma}}
+\frac13a^{\alpha\beta\gamma\delta}\gamma_{\gamma\delta}(\!u, w){n_{\beta}}
+\M^{\alpha\beta}{n_{\beta}}\right]v_{\alpha}
\\
-\int_{\t\E^S_h}\left[\frac23a^{\gamma\beta\sigma\tau}\rho_{\sigma\tau}(\!v, z)b^{\alpha}_{\beta}
{n_{\gamma}}
+\frac13a^{\alpha\beta\gamma\delta}\gamma_{\gamma\delta}(\!v, z){n_{\beta}}
-\N^{\alpha\beta}{n_{\beta}}\right]u_{\alpha}
\\
+
\int_{\t\E^S_h}
\frac13\left\{a^{\alpha\beta\lambda\gamma}\rho_{\lambda\gamma|\beta}(\!u, w)
{n_{\alpha}}+ D_{\!s}\left[a^{\alpha\beta\lambda\gamma}\rho_{\lambda\gamma}(\!u, w)
n_{\beta}s_\alpha\right]\right\}z
\\
\hfill  +
\int_{\t\E^S_h}
\frac13\left\{a^{\alpha\beta\lambda\gamma}\rho_{\lambda\gamma|\beta}(\!v, z)
{n_{\alpha}}+ D_{\!s}\left[a^{\alpha\beta\lambda\gamma}\rho_{\lambda\gamma}(\!v, z)
n_{\beta}s_\alpha\right]\right\}w
\end{multline*}
\begin{multline*}
-\int_{\t\E^D_h}\left[\frac23a^{\gamma\beta\sigma\tau}\rho_{\sigma\tau}(\!u, w)b^{\alpha}_{\beta}
{n_{\gamma}}
+\frac13a^{\alpha\beta\gamma\delta}\gamma_{\gamma\delta}(\!u, w){n_{\beta}}
+\M^{\alpha\beta}{n_{\beta}}\right]v_{\alpha}
\\
 -\int_{\t\E^D_h}\left[\frac23a^{\gamma\beta\sigma\tau}\rho_{\sigma\tau}(\!v, z)b^{\alpha}_{\beta}
{n_{\gamma}}
+\frac13a^{\alpha\beta\gamma\delta}\gamma_{\gamma\delta}(\!v, z){n_{\beta}}
-\N^{\alpha\beta}{n_{\beta}}\right]u_{\alpha}
\end{multline*}
\begin{multline*}
+
\int_{\t\E^D_h}
\frac13\left\{a^{\alpha\beta\lambda\gamma}\rho_{\lambda\gamma|\beta}(\!u, w)
{n_{\alpha}}+ D_{\!s}\left[a^{\alpha\beta\lambda\gamma}\rho_{\lambda\gamma}(\!u, w)
n_{\beta}s_\alpha\right]\right\}z\\
+
\int_{\t\E^D_h}
\frac13\left\{a^{\alpha\beta\lambda\gamma}\rho_{\lambda\gamma|\beta}(\!v, z)
{n_{\alpha}}+ D_{\!s}\left[a^{\alpha\beta\lambda\gamma}\rho_{\lambda\gamma}(\!v, z)
n_{\beta}s_\alpha\right]\right\}w
\\
-\int_{\t\E^D_h}
\frac13a^{\alpha\beta\lambda\gamma}\rho_{\lambda\gamma}(\!u, w)
n_{\beta}n_\alpha D_{\!n}z
-\int_{\t\E^D_h}
\frac13a^{\alpha\beta\lambda\gamma}\rho_{\lambda\gamma}(\!v, u)
n_{\beta}n_\alpha D_{\!n}w.
\end{multline*}

Using the identities \eqref{id-1} on every element $\t\tau\in\t\T_h$, we can rewrite the above form as
\begin{multline}\label{K-ub-a-weak}
\ub a(\!u, w;\!v, z)+b(\M; \!v, z)-b(\N; \!u, w)+\eps^2c(\M, \N) \\
=\int_{\t\Omega_h}\left\{\frac13\left[-a^{\gamma\beta\sigma\tau}\rho_{\sigma\tau|\gamma}(\!u, w)
b^{\alpha}_{\beta}-
a^{\gamma\beta\sigma\tau}[\rho_{\sigma\tau}(\!u, w)
b^{\alpha}_{\gamma}]|_{\beta}
-a^{\alpha\beta\gamma\delta}\gamma_{\gamma\delta|\beta}(\!u, w)\right]
-
\M^{\alpha\beta}|_{\beta}\right\}v_{\alpha} \\
+\int_{\t\Omega_h}
\left\{\frac13\left[a^{\alpha\beta\lambda\gamma}\rho_{\lambda\gamma|\alpha\beta}(\!u, w)
-c_{\alpha\beta}
a^{\alpha\beta\lambda\gamma}\rho_{\lambda\gamma}(\!u, w)
-b_{\alpha\beta}a^{\alpha\beta\gamma\delta}\gamma_{\gamma\delta}(\!u, w)\right]
-b_{\alpha\beta}\M^{\alpha\beta}\right\}z  \\
-\int_{\t\Omega_h}\N^{\alpha\beta}\gamma_{\alpha\beta}(\!u, w)+\eps^2\int_{\t\Omega}a^{\alpha\beta\gamma\delta}\M_{\gamma\delta}\N_{\alpha\beta}
\end{multline}
\begin{multline*}
+\frac23\int_{\t\E^0_h}a^{\gamma\beta\sigma\tau}\lbra\rho_{\sigma\tau}(\!u, w)\rbra_{n_{\gamma}}
b^{\alpha}_{\beta}\lbrac v_{\alpha}\rbrac
+\frac13\int_{\t\E^0_h}
a^{\alpha\beta\lambda\gamma}\lbra\rho_{\lambda\gamma}(\!u, w)\rbra_{n_{\beta}}
\lbrac \partial_\alpha z\rbrac
\\
-\frac13\int_{\t\E^0_h}
a^{\alpha\beta\lambda\gamma}\lbra\rho_{\lambda\gamma|\beta}(\!u, w)
\rbra_{n_{\alpha}}\lbrac z\rbrac
+\frac13\int_{\t\E^0_h}a^{\alpha\beta\gamma\delta}\lbra\gamma_{\gamma\delta}(\!u, w)\rbra_{n_{\beta}}\lbrac v_{\alpha}\rbrac
 \\
-\frac23\int_{\t\E^0_h}a^{\gamma\beta\sigma\tau}\lbrac\rho_{\sigma\tau}(\!v, z)\rbrac b^{\alpha}_{\beta}
\lbra u_{\alpha}\rbra_{n_{\gamma}}
-\frac13\int_{\t\E^0_h}
a^{\alpha\beta\lambda\gamma}\lbrac\rho_{\lambda\gamma}(\!v, z)\rbrac
\lbra \partial_\alpha w\rbra_{n_{\beta}}
\\
+
\frac13\int_{\t\E^0_h}
a^{\alpha\beta\lambda\gamma}\lbrac\rho_{\lambda\gamma|\beta}(\!v, z)
\rbrac\lbra w\rbra_{n_{\alpha}}
-\frac13\int_{\t\E^0_h}a^{\alpha\beta\gamma\delta}\lbrac\gamma_{\gamma\delta}(\!v, z)\rbrac\lbra u_{\alpha}\rbra_{n_{\beta}}
\\
+\int_{\t\E^0_h}\lbra\M^{\alpha\beta}\rbra_{n_{\beta}}\lbrac v_{\alpha}\rbrac
+\int_{\t\E^0_h}\lbrac\N^{\alpha\beta}\rbrac\lbra u_{\alpha}\rbra_{n_{\beta}}
\end{multline*}
\begin{multline*}
+\int_{\t\E^F_h}\left[2\frac13a^{\gamma\beta\sigma\tau}\rho_{\sigma\tau}(\!u, w)b^{\alpha}_{\beta}
{n_{\gamma}}
+\frac13a^{\alpha\beta\gamma\delta}\gamma_{\gamma\delta}(\!u, w){n_{\beta}}
+\M^{\alpha\beta}{n_{\beta}}\right]v_{\alpha}
\\
-
\int_{\t\E^F_h}
\frac13\left\{a^{\alpha\beta\lambda\gamma}\rho_{\lambda\gamma|\beta}(\!u, w)
{n_{\alpha}}+ D_{\!s}\left[a^{\alpha\beta\lambda\gamma}\rho_{\lambda\gamma}(\!u, w)
n_{\beta}s_\alpha\right]\right\}z
\\
+\int_{\t\E^F_h\cup\t\E^S_h}
\frac13a^{\alpha\beta\lambda\gamma}\rho_{\lambda\gamma}(\!u, w)
n_{\beta}n_\alpha D_{\!n}z
\end{multline*}
\begin{multline*}
-\int_{\t\E^S_h\cup\t\E^D_h}\left[2\frac13a^{\gamma\beta\sigma\tau}\rho_{\sigma\tau}(\!v, z)b^{\alpha}_{\beta}
{n_{\gamma}}
+\frac13a^{\alpha\beta\gamma\delta}\gamma_{\gamma\delta}(\!v, z){n_{\beta}}
-\N^{\alpha\beta}{n_{\beta}}\right]u_{\alpha}\\
+
\int_{\t\E^S_h\cup\t\E^D_h}
\frac13\left\{a^{\alpha\beta\lambda\gamma}\rho_{\lambda\gamma|\beta}(\!v, z)
{n_{\alpha}}+ D_{\!s}\left[a^{\alpha\beta\lambda\gamma}\rho_{\lambda\gamma}(\!v, z)
n_{\beta}s_\alpha\right]\right\}w\\
-\int_{\t\E^D_h}
\frac13a^{\alpha\beta\lambda\gamma}\rho_{\lambda\gamma}(\!v, z)
n_{\beta}n_\alpha D_{\!n}w.
\end{multline*}

Since there is no jump in the Koiter model solution $\!u\e, w\e, \M\e$, in view of the definition \eqref{form_a} and \eqref{form_ub_a}
we have $a(\!u\e, w\e; \!v, z)=\ub a(\!u\e, w\e; \!v, z)$ for any piecewise $\!v, z$.
We therefore have that for any piecewise regular test function $\!v, z$
\begin{multline*}
a(\!u\e, w\e;\!v, z)+b(\M\e; \!v, z)-b(\N; \!u\e, w\e)+\eps^2c(\M\e, \N)\\
=
\ub a(\!u\e, w\e;\!v, z)+b(\M\e; \!v, z)-b(\N; \!u\e, w\e)+\eps^2c(\M\e, \N)=
\langle\!f; \!v, z\rangle.
\end{multline*}
The linear form $\langle\!f; \!v, z\rangle$ is defined by \eqref{form_f}. The last equation follows from comparing
\eqref{K-ub-a-weak} with the strong form of the Koiter model \eqref{K-P-strong}.
This verifies the consistency of the finite element model
\eqref{K-fem} with the Koiter shell model \eqref{K-P-model}.

\end{document}